\newcommand\bibstyle@comma{\bibpunct(),a,,}
\newcommand\bibstyle@semicolon{\bibpunct();a,,}
\pretocmd\cite{\citestyle{comma}}\relax\relax
\pretocmd\citep{\citestyle{semicolon}}\relax\relax
\numberwithin{equation}{section}
\definecolor{lightblue}{HTML}{044E9E}
\def\ZZ{{\mathbb Z}}
\def\RR{{\mathbb R}}
\def\PP{\mathbb{P}}
\def\EE{\mathbb{E}}
\def\RR{\mathbb{R}}
\def\NN{\mathbb{N}}
\def\calK{\mathcal K}
\def\clp{\mathcal{P}}
\def\calS{\mathcal S}
\def\TV{\text{TV}}
\def\bbH{\mathbb{H}}
\newcommand{\comment}[1]{}
\newcommand{\Prob}{\mathbb P}
\newcommand{\Cov}[0]{\operatorname{Cov}}
\newcommand{\Var}{\operatorname{Var}} 
\newcommand{\vecop}{\operatorname{vec}}
\newcommand{\diag}{\operatorname{diag}}
\newcommand{\tr}{{\rm{tr}}}
\newcommand{\argmin}{{\rm{argmin}}}
\newcommand{\clx}{\mathcal{X}}
\newcommand{\clb}{\mathcal{B}}
\newcommand{\clk}{\mathcal{K}}
\newcommand{\clf}{\mathcal{F}}
\newcommand{\cln}{\mathcal{N}}
\newcommand{\bmH}{\mathbb{H}}
\NewDocumentCommand{\evaluat}{sO{\big}mm}{%
  \IfBooleanTF{#1}
   {\mleft. #3 \mright|_{#4}}
   {#3#2|_{#4}}%
}
\DeclareMathOperator*{\argminlim}{arg\,min}
\theoremstyle{plain}
\newtheorem{Thm}{Theorem}[section]
\newtheorem{Lem}[Thm]{Lemma}
\newtheorem{Def}[Thm]{Definition}
\theoremstyle{remark}
\newtheorem{Rem}[Thm]{Remark}
\theoremstyle{definition}
\newtheorem{Gassump}{Assumption}
\newtheorem{Kassump}{Assumption}
\newtheorem{Nassump}{Assumption}
\xpatchcmd{\proof}{\@addpunct{.}}{\@addpunct{:}}{}{}
\DeclareFontFamily{U}{mathx}{\hyphenchar\font45}
\DeclareFontShape{U}{mathx}{m}{n}{<-> mathx10}{}
\DeclareSymbolFont{mathx}{U}{mathx}{m}{n}
\DeclareMathAccent{\widebar}{0}{mathx}{"73}
\newcommand{\mockalph}[1]{}
\begin{document}

\def\spacingset#1{\renewcommand{\baselinestretch}%
{#1}\small\normalsize} \spacingset{1}

\newtheorem*{assumptionBIC*}{\assumptionnumber}
\providecommand{\assumptionnumber}{}
\makeatletter
\newenvironment{assumptionBIC}[2]
 {%
  \renewcommand{\assumptionnumber}{Assumption #1#2}%
  \begin{assumptionBIC*}%
  \protected@edef\@currentlabel{#1#2}%
 }
 {%
  \end{assumptionBIC*}
 }
\makeatother


\title{Kernel Estimation for Nonlinear Dynamics
\footnote{AMS subject classification. Primary: 62J07, 62M10. Secondary: 62G08.}
\footnote{Keywords: kernel regression, reproducing kernel Hilbert space, method of regularization, nonlinear time series, concentration inequalities, Markov chains.}
\footnote{MD was partially supported by the FAU Emerging Talents Initiative.}}

\author{
Marie-Christine D\"uker \\ FAU Erlangen-N\"urnberg
\and
Adam Waterbury \\ Denison University
}
\date{\today}

\maketitle

\bigskip

\begin{abstract}
\noindent
Many scientific problems involve data exhibiting both temporal and cross-sectional dependencies. While linear dependencies have been extensively studied, the theoretical analysis of regression estimators under nonlinear dependencies remains scarce. This work studies a kernel-based estimation procedure for nonlinear dynamics within the reproducing kernel Hilbert space framework, focusing on nonlinear vector autoregressive models. We derive nonasymptotic probabilistic bounds on the deviation between a regularized kernel estimator and the nonlinear regression function. A key technical contribution is a concentration bound for quadratic forms of stochastic matrices in the presence of dependent data, which is of independent interest. Additionally, we characterize conditions on multivariate kernels that guarantee optimal convergence rates.
\end{abstract}

\section{Introduction}
Many data exhibit nonlinear behaviors that linear models cannot capture. In fields such as finance \citep{benrhmach2020nonlinear}, economics \citep{nyberg2018forecasting}, biology and neuroscience \citep{kato2006statistical,yu2021sparse}, relationships among variables often involve thresholds, feedback loops, or sudden changes that linear models fail to address. These data often display temporal and cross-sectional dependencies, making them well-suited for modeling with nonlinear dynamical systems. However, estimation techniques for time series arising from such systems remain far less developed than those for linear models. In contrast, the statistical learning and machine learning literature provide methodology that allow for the estimation of such nonlinear relationships.

We employ the mathematical framework of reproducing kernel Hilbert spaces (RKHSs) to introduce an estimator for nonlinear dynamical systems with temporal and cross-sectional correlations. Kernel-based techniques allow to make inferences in a high-dimensional feature space mapped implicitly by a kernel function. This technique is integral to many statistical learning algorithms, including the support vector machine (SVM) and the kernel principle component analysis (KPCA) algorithms (see \cite{hastie2009elements}),  but has not been explored in the context of nonlinear time series. Our estimation procedure and theoretical results are phrased for nonlinear vector autoregression models but also cover stochastic regression models. We treat the stochastic regression model as a special case of nonlinear vector autoregression; see Section \ref{se:stochastic regression} for further discussion.

\textit{Nonlinear vector autoregression:} Suppose $\{X_t\}_{t \in \ZZ}$ follows a causal nonlinear VAR model of order $p$. To be more precise, we consider an $\RR^d$-valued sequence $\{X_{t}\}_{t \in \ZZ}$, where $X_{t} = (X_{1,t}, \dots, X_{d,t})'$ is  defined according to  
\begin{align} \label{eq:nonlinearVAR_2}
X_t &= g(X_{t-1}, \dots, X_{t-p}) + \varepsilon_{t}= \left(g_{1}(X_{t-1}, \dots, X_{t-p}), \dots, g_{d}(X_{t-1}, \dots, X_{t-p})\right)' + \varepsilon_t,
\end{align}
for a function $g: \RR^{dp} \to \RR^d$ and a suitable suitable noise sequence $\{\varepsilon_t\}_{t\in\ZZ}$. The model \eqref{eq:nonlinearVAR_2} is quite general and allows for nonlinear dependencies across multiple lags ($p$) and across space ($d$). The goal is to estimate the function $g$, which allows for downstream tasks such as forecasting,  interpretation of dynamics, and clustering.
Note that the model \eqref{eq:nonlinearVAR_2} also covers additive dynamics of the form 
\begin{align}\label{eq:nonlinearVAR}
X_t = \sum\nolimits_{j=1}^{p} h_{j}(X_{t-j}) + \varepsilon_{t},
\end{align}
where $h_1,\dots,h_d : \RR^{d} \to \RR^d$ are nonlinear regression functions.
To estimate the function $g$ in \eqref{eq:nonlinearVAR_2}, we employ a  regularized least squares estimator also known as Kernel Ridge Regression (KRR).
KRR is a popular technique in supervised learning and has been used to avoid overfitting in regression problems. However, to the best of our knowledge, there is no literature on the use of KRR estimators for temporal models. In particular, we are the first to provide statistical guarantees under temporal and cross-sectional dependencies for such an estimator. On the way, we formalize the use of multivariate RKHSs and corresponding Mercer representations when kernels are evaluated at multivariate data. In contrast to regression problems, the use of temporal models such as nonlinear vector autoregression makes it desirable to allow for unbounded errors, which requires kernels defined on unbounded spaces. 

The theoretical difficulties in this work are the development of concentration results for random quadratic forms of a multivariate empirical kernel matrix. We establish nonasymptotic high probability bounds for KRR estimators by proving concentration results for quadratic forms of dependent data. To be more precise, our results establish a Hoeffding-type inequality for quadratic forms involving empirical kernel matrices, and are of independent interest.
We provide conditions under which the estimator achieves the optimal convergence rate in the sense that the results align with the rate for quadratic forms of deterministic matrices; see  \cite{rudelson2013hanson}.

\textit{Literature review:} 
The literature on nonlinear regression models and corresponding statistical guarantees is sparse. The existing literature has considered regression settings \citep{liu2023estimation} and additive models incorporating sparsity assumptions; see \cite{yuan2016minimax,zhou2018non}, who consider models with additive dynamics similar to \eqref{eq:nonlinearVAR}. 
To the best of our knowledge there is no work providing statistical guarantees for regularized estimators of  nonlinear vector autoregression models.

The works \cite{smale2005shannon} and \cite{liu2023estimation} are the closest to our paper and served as inspiration for our estimation procedure. \cite{smale2005shannon} introduce a KRR estimator for a nonlinear regression model and derive conditional concentration results, effectively looking at nonrandom predictors. \cite{liu2023estimation} provide estimators for higher order derivatives of the regression function based on a KRR estimator. Our main result can recover their results but also allows for much more general settings. In particular, our results allow for nonlinear temporal and cross-sectional dependencies. Furthermore, our class of feasible kernels includes kernel functions on noncompact spaces and allows for multivariate inputs.

Regularization techniques like ridge- and lasso-type penalties have been studied extensively for linear regression problems. Early works study linear regression models with deterministic predictors; see \cite{loh2012high} who provide consistency results for lasso estimators.
Later generalizations cover stochastic regression and linear vector autoregression models; see \cite{basu2015regularized} for lasso and \cite{ballarin2024ridge} for ridge estimation.

Our theoretical results crucially rely on concentration results for functionals of Markov chains. Recent literature studies nonlinear functionals of Markov chains under a range of boundedness and smoothness conditions; see \cite{adamczak2015exponential,adamczak2015concentration,paulin2015concentration,chen2017concentration,alquier2019exponential,fan2021hoeffding}.

Given the kernel-based estimation approach, our work naturally connects to the literature on U-statistics, particularly concentration inequalities for U-statistics. 
The problem is well studied for kernels evaluated at i.i.d. samples; see 
\cite{arcones1993limit,arcones1995bernstein}. A survey can be found in \cite{pitcan2017note}, and recent works attempt to generalize those results in several directions. For instance, \cite{chakrabortty2018tail} consider unbounded kernels and \cite{duchemin2023concentration} 
study U-statistics for Markov chains  and characterize assumptions on the chain that ensure optimal convergence rates. \cite{borisov2015note} and \cite{han2018exponential} established  exponential inequalities for U-statistics of order two and larger in time series under mixing conditions. These findings were later refined by \cite{shen2020exponential}, which introduced a Hanson-Wright-type inequality for both V- and U-statistics under conditions on the time-dependent process that are more practical to verify.

\textit{Outline of the paper:} The rest of the paper is organized as follows. In Section \ref{se:prelim}, we introduce a kernel-based  regularized estimator and state our main  assumptions. Section \ref{se:concentration} formalizes our main results and includes a discussion of these results and associated assumptions. The section is supplemented with several examples that illustrate that our assumptions are satisfied by a wide range of models.
In Section \ref{se:auxiliary results}, we state some more technical tools and auxiliary results.
Finally Sections \ref{se:proofs} and \ref{se:ProofsGaussianKernel} provide the proofs of our results.

\textit{Notation:} We write $\NN  \doteq \{1,2,\dots\}$ and $\NN_0 \doteq \NN \cup \{0\}$. For measurable spaces $(\clx_1, \clf_1)$ and $(\clx_2, \clf_2)$, we let $\clb(\clx_1:  \clx_2)$ denote the space of measurable functions from $\clx_1$ to $\clx_2$. {Set $\clx \subseteq \RR^{d}$} and for a kernel {$K : \clx^p \times \clx^p \to \RR$, $x, y \in \clx^p$,} we use the notation $K(x, y)$ and $K(\vecop(x), \vecop(y))$ interchangeably, where $\vecop(\cdot)$ stacks each column of a matrix. For a separable Hilbert space $\bbH \subseteq \clb( \clx : \RR)$  with inner product $\langle \cdot, \cdot \rangle_{\bbH}$ and orthonormal basis $\{ \phi_k, \; k \in \NN\}$, given an $\bbH$-valued random variable $X$, we define $\EE_{\bbH}(X) \in \bbH$ as $\EE_{\bbH}(X) \doteq  \sum\nolimits_{k=1}^{\infty} \EE\left( \langle  X, \phi_k\rangle_{\bbH}\right) \phi_k$, 
whenever the series is convergent in $\bbH$. If $x \in \RR^m$ for some $m \in \NN$, we let $\| x\| = (\sum\nolimits_{i=1}^{m} x_i^2)^{1/2}$. For $M \in \RR^{m \times n}$, we let $\| M\| = ( \sum\nolimits_{i=1}^m \sum\nolimits_{j=1}^{n} M_{ij}^2)^{1/2}$ denote the Frobenius norm of $M$.  Other norms are denoted using subscripts. In particular, for a function $f : \RR^m \to \RR^n$, we set $\|f\|_{\infty} = \sup\nolimits_{x \in \RR^m}\|f(x)\|_{\infty}$, where for a vector $y \in \RR^n$, we let $\|y\|_{\infty} = \sup\nolimits_{i=1,\dots,n} |y_i|$.  For a Polish space $\mathcal{S}$, we write $\clp(\mathcal{S})$ to denote the space of probability measures on $\mathcal{S}$, and for two finite measures $\mu, \nu$ on $\mathcal{S}$, we write $(\mu - \nu)$ to denote the signed measure defined as $(\mu - \nu)(A) = \mu(A) - \nu(A)$. Similarly, we write $|\mu-  \nu|$ to denote the finite measure defined as $|\mu - \nu|(A) = |\mu(A) - \nu(A)|$. The total variation distance between $\mu$ and $\nu$ is given by $\| \mu - \nu\|_{\TV} = \sup\nolimits_{A\in\clb(\calS)}|\mu(A)-\nu(A)|$. For $\pi \in \clp(\RR^d)$, ${\PP}_{\pi}(X_1 \in A_1) = \pi(A_1)$ is the law under which $X_1 \sim \pi$. For $x \in \clx$, we write $\PP_x$ to denote $\PP_{\delta_x}$, where $\delta_x$ is the Dirac measure at $x$. 
For $n \in \NN$ and $k_1,\dots,k_{j} \in \NN_0$ such that $\sum\nolimits_{i=1}^{j} k_i = n$ and  $\bm{k} = (k_1,\dots,k_{j})$, we write $\binom{n}{\bm{k}} = \frac{n}{\bm{k}!}$, where $\bm{k}! = k_1! \cdots k_{j}!$. For $k \in \NN_0$, let $\bm{k} + k = (k_1 + k, \dots, k_j + k)$. We use $\Gamma$ to denote the gamma function and $B$ to denote the beta function.

\section{Preliminaries}\label{se:prelim}
In this section, we establish notation and present preliminary results to introduce a regularized estimator within the RKHS framework. Recalling the model \eqref{eq:nonlinearVAR_2}, it is convenient to introduce the corresponding $\clx^p$-valued nonlinear VAR model of order one and treat it as a Markov chain. This $\clx^p$-valued process $\{Y_t\}$ is defined by 
\begin{equation}\label{eq:defyn1b}
Y_t \doteq   
    \begin{pmatrix}
        X'_{t}, 
        \dots, 
        X'_{t-p+1}
    \end{pmatrix}', \quad t = p,\dots,T, 
\end{equation}
and, for $t = p, \dots, T$, we define  
\begin{equation}\label{eq:Ytdef}
\begin{pmatrix}
        X_{t}\\
        \vdots\\
        X_{t-p+1}
    \end{pmatrix}
    =   \begin{pmatrix}
        g(X_{t-1},\dots,X_{t-p})\\
        X_{t-1}\\
        \vdots\\
        X_{t-p+1}
    \end{pmatrix} + \begin{pmatrix}
        \varepsilon_{t} \\
        0 \\
        \vdots \\
        0
    \end{pmatrix}
    \hspace{0.2cm}
    \text{ or }
    \hspace{0.2cm}
    Y_{t} = G_Y(Y_{t-1}) + \xi_t.
\end{equation}
For notational convenience, we sometimes write 
\begin{equation}\label{eq:defyt2b}
Y_t = (Y_{t,1}',\dots, Y_{t,p}')', \quad Y_{t,j} = X_{t +1 - j}, \; j = 1,\dots,p,
\end{equation}
so that, in particular, $Y_{t,1} = X_t$. In Lemma \ref{prop:geomergodic} it is shown that, under some mild assumptions, $\{Y_t\}$ is a $\clx^p$-valued geometrically ergodic Markov chain and therefore has a unique stationary distribution $\pi \in \clp(\clx^p)$. We let
\[
L^2(\clx^p, \pi ) = \left\{ f \in \mathcal{B}( \clx^p : \RR ) : \int_{\clx^p} (f(y))^2 \pi(dy) < \infty \right\}
\]
denote the associated space of square-integrable functions.
 An RKHS is defined through a kernel, namely a symmetric and positive definite function $K : \clx^p \times  \clx^p \to \RR$. More precisely, $K$ is a kernel if  
\[
K(x,y) = K(y,x), \quad x,y \in \clx^p,
\]
and $\sum_{i,j=1}^d c_i c_j K(z_i, z_j) \ge 0$ for all $z_1,\dots, z_d \in \clx^p$ and  $c_1,\dots, c_d \in \RR$, with equality if and only if $c_i = 0$ for all $i = 1,\dots,d$. For kernels $K_1,\dots,K_d : \clx^p \times \clx^p \to \RR$, we let $\bbH_i \subseteq L^2(\clx^p, \pi )$ denote the RKHS associated with $K_i$. That is $(\bbH_i, \| \cdot \|_{\bbH_i})$ is the Hilbert space defined as,
\[
\bbH_i = \{ f \in L^2(\clx^p,\pi):  \text{ for each } x \in \clx^p, \; f(x) = \langle f , K_i(x,\cdot) \rangle_{\bbH_i} \},
\]
where $\|\cdot\|_{\bbH_i}$ is the RKHS inner product associated with $K_i$ (see Section \ref{se:repHil}).
For the multivariate kernel 
\begin{equation} \label{eq:diagkernel1}
    \clk(x,y) = \text{diag}(K_1(x,y),\dots,K_d(x,y)), \quad x,y \in \clx^p \times \clx^p
\end{equation}
we denote the associated RKHS by $(\bbH, \| \cdot \|_{\bbH})$ and note that $\bbH$ can be  identified with a subset of $L^{2,d}(\clx^p, \pi ) \doteq  \bigtimes_{i=1}^d L^2(\clx^p, \pi)$. Further discussion of RKHS inner products and multivariate RKHS can be found in Section \ref{se:repHil}.

\subsection{Kernel ridge regression}\label{sec:krrnonlinearvar}
In this section, we introduce a regularized kernel-based estimator for the regression function $g$ in \eqref{eq:nonlinearVAR_2}. To estimate this function, we employ a ridge-type objective function of the form 
\begin{equation} \label{eq:objectivefunction1}
\begin{split}
    \widehat{g}_T &= \argmin_{g \in \bbH} \left\{ \frac{1}{T} \sum_{t=p+1}^{T} (X_t - g(X_{(t-p):(t-1)}))'(X_t - g(X_{(t-p):(t-1)}))  + \lambda \| g \|^2_{\bbH} \right\}\\
    &=  \argmin_{g \in \bbH} \left\{ \frac{1}{T} \sum_{t=p+1}^{T} (Y_{t,1} - g(Y_{t-1}))'(Y_{t,1} - g(Y_{t-1}))  + \lambda \| g \|^2_{\bbH} \right\},
    \end{split}
\end{equation}
where $X_{(t-p):(t-1)} \doteq (X_{t-1},\dots, X_{t-p})$ and 
$(\bbH, \| \cdot \|_{\bbH})$ is the RKHS induced by $\mathcal{K}$ (see Section \ref{se:repHil}). The second identity in \eqref{eq:objectivefunction1} uses the notation introduced in \eqref{eq:defyn1b}.

Note that the model \eqref{eq:nonlinearVAR_2} can be written more compactly as
\begin{equation} \label{eq:model_matrix_form}
  \mathcal{Y}_X \doteq  \begin{pmatrix}
        X'_{p+1} \\
        \vdots \\
        X'_{T}
    \end{pmatrix}
    = 
    \begin{pmatrix}
        g(X_p,\dots, X_1)' \\
        \vdots \\
        g(X_{T-1}, \dots, X_{T-p})'
    \end{pmatrix}
    +
    \begin{pmatrix}
        \varepsilon'_{p+1} \\
        \vdots \\
        \varepsilon'_{T}
    \end{pmatrix} \doteq  G_X(g) + \mathcal{E}.
\end{equation}
Furthermore, \eqref{eq:model_matrix_form} can be written in vectorized form as 
\begin{equation}\label{eq:vectorizedregressionproblem1}
   Y \doteq  \vecop(\mathcal{Y}_{X}) = \vecop(G_{X}(g)) + \vecop(\mathcal{E})  \doteq G_{g}(X) + \eta,
\end{equation}
where
\begin{equation}\label{eq:xdef635}
X  \doteq (X_1,\dots,X_T) \in \RR^{d \times T}
\hspace{0.2cm}
\text{ and }
\hspace{0.2cm}
\eta \doteq \vecop(\mathcal{E}).
\end{equation}
The goal is to rewrite the objective function \eqref{eq:objectivefunction1} using a representer theorem in order to obtain an explicit representation for $\hat{g}_T$ akin to the standard ridge regression estimator in linear regression problems.  For the kernels $K_1, \dots, K_d$, there are, by the representer theorem \citep{scholkopf2001generalized}, 
\[
\widehat{\alpha}_i = (  \widehat{\alpha}_{i,p+1}, \dots, \widehat{\alpha}_{i,T})' \in \RR^{(T-p)\times 1},
\hspace{0.2cm}
i=1,\dots, d,
\]
such that, for $z \in \clx^p$, 
\begin{align} \label{eq:g_i}
\widehat{g}_{i,T}(z) & = 
\sum\nolimits_{t=p+1}^T \widehat{\alpha}_{i,t} K_i(z, X_{(t-p ): (t-1)})
= K_{X,i}(z)' \widehat{\alpha}_i,
\end{align}
where, for $z  \in \clx$, $K_{X,i}(z) \in \RR^{ (T-p) \times 1}$ is defined by 
\[
K_{X, i}(z) = \begin{pmatrix}
    K_i(z, X_{1:p})\\
    \vdots\\
    K_i(z,X_{(T-p):(T-1)})
\end{pmatrix}.
\]
In particular, using \eqref{eq:g_i}, the minimizer in \eqref{eq:objectivefunction1} can be written explicitly as
\begin{align} \label{eq:alphaandKvector}
    \widehat{g}_{T}(z) 
    &= ( \widehat{g}_{1,T}(z), \dots, \widehat{g}_{d,T}(z))'
    = 
    \left(
    K_{X,1}(z)' \widehat{\alpha}_{1}
,\dots,
     K_{X,d}(z)' \widehat{\alpha}_{d}
\right)'
 = {\mathcal{K}}_{X}(z) \widehat{\alpha},
\end{align}
where, for each $z \in \clx^p$, $\clk_X(z) \in \RR^{d \times d(T-p)}$ is given by 
\begin{equation}\label{eq:empkwithargument}
{\mathcal{K}}_{X}(z) \doteq \diag(K_{X, 1}(z)', \dots, K_{X, d}(z)'),
\end{equation}
and
$
\widehat{\alpha} \doteq \vecop(\widehat{\alpha}_1, \dots, \widehat{\alpha}_d) \in \RR^{d(T-p)\times 1}$.
Recall \eqref{eq:model_matrix_form} and note that, for $i=p+1,\dots, T$, using \eqref{eq:alphaandKvector}, the estimator for the $(i-p)$-th row of $G_X$ in \eqref{eq:model_matrix_form} can be written as
\begin{equation} \label{eq:ghat}
\begin{aligned}
\widehat{g}_{T}(X_{i-1}, \ldots, X_{i-p}) 
&= 
\Bigg(
\sum\nolimits_{t=p+1}^T \widehat{\alpha}_{1,t} K_1(X_{(i-p):(i-1)}, X_{(t-p ): (t-1)})
,\dots,
\\&\hspace{4cm}
\sum\nolimits_{t=p+1}^T \widehat{\alpha}_{d,t} K_d(X_{(i-p):(i-1)}, X_{(t-p ): (t-1)})
\Bigg)'.
\end{aligned}
\end{equation}
To rewrite  the data matrix $G_X(g)$ in \eqref{eq:model_matrix_form} as in \eqref{eq:ghat}, 
consider the empirical kernel matrices ${K}_1(X,X ), \dots,{K}_d(X, X) \in \RR^{(T-p) \times (T-p)}$,
with  ${K}_i(X,X ) \doteq [({K}_{i}(X,X))_{s,t}]_{s,t = p+1,\dots,T}$  defined as 
\begin{equation*}\label{eq:kernelmatrixdef1}
({K}_{i}(X,X))_{s,t}  = K_i(X_{(s-p):(s-1)}, X_{(t-p):(t-1)}), \quad s,t = p+1,\dots,T.
\end{equation*}
We also define an empirical kernel matrix
\begin{equation}\label{eq:diagempkernelmatrix}
{\mathcal{K}}(X,X) = \diag({K}_1(X,X), \dots,{K}_d(X,X) ) \in \RR^{d(T-p) \times d(T-p)} .
\end{equation}
Then, vectorizing the estimated counterpart of $G_X(g)$ in \eqref{eq:model_matrix_form}, we have
\begin{align}
 \vecop
    \begin{pmatrix}
        \widehat{g}_T(X_p,\dots, X_1)' \\
        \vdots \\
        \widehat{g}_T(X_{T-1}, \dots, X_{T-p})'
    \end{pmatrix}
    &=
\vecop
\left(
K_1 (X,X) \widehat{\alpha}_1
,\dots,
K_d(X,X) \widehat{\alpha}_d
\right) \nonumber\\
& = 
\diag(K_1(X,X), \dots,K_d(X,X) )
\vecop
\begin{pmatrix}
(\widehat{\alpha}_1, \hdots, \widehat{\alpha}_d)
\end{pmatrix} \nonumber \\
&= \mathcal{K}(X,X) \widehat{\alpha}.
\label{eq:kerneltimesalphahat}
\end{align}
Using \eqref{eq:ghat} and \eqref{eq:kerneltimesalphahat}, we can reduce \eqref{eq:objectivefunction1} to an equivalent convex objective function in $\RR^{d p \times (T-p)}$, of the form
\begin{align} 
    \argminlim\nolimits_{ \alpha \in \RR^{d(T-p)} } \left\{ \frac{1}{T} \| Y - \mathcal{K}(X,X) \alpha  \|^2_2 + \lambda {\alpha}' \mathcal{K}(X,X) {\alpha}  \right\} \label{eq:objectivefunction3}. 
\end{align}
Finally, the optimization problem in \eqref{eq:objectivefunction3} is solved by the estimator
\begin{equation*}
    \widehat{\alpha} = 
(\mathcal{K}(X,X) \mathcal{K}(X,X) + 
\lambda T \mathcal{K}(X,X))^{-1} \mathcal{K}(X,X) \vecop(X)
\end{equation*}
so it follows from \eqref{eq:alphaandKvector} that the minimizer in \eqref{eq:objectivefunction1} is given by, for $z \in \clx^p$, 
\begin{equation}\label{eq:ghatkernelrep1}
\begin{aligned}
    \widehat{g}_T(z) = 
    \mathcal{K}_{X}(z)  \widehat{\alpha} &=
    \mathcal{K}_{X}(z)  \left(\mathcal{K}(X,X) \mathcal{K}(X,X) + \lambda T \mathcal{K}(X,X)\right)^{-1} \mathcal{K}(X,X) Y \\&=
    \mathcal{K}_{X}(z)  (\mathcal{K}(X,X) + \lambda T I_{d(T-p)})^{-1} Y.
\end{aligned}
\end{equation}

On the population level, an objective function analogous to \eqref{eq:objectivefunction1} can be written as 
\begin{equation}\label{eq:glambdaoptimize}
g_{\lambda} = \argminlim\nolimits_{\widetilde{g} \in \bbH} \left\{ \| \widetilde{g} - g\|_2^2 + \lambda \|\widetilde{g}\|_{\bbH}^2\right\}.
\end{equation}
Recalling the definition of $L^{2,d}(\clx^p,\pi)$ from Section \ref{se:prelim}, standard results (see, e.g., \cite{smale2005shannon,liu2023estimation}) show that an explicit representation of $g_{\lambda}$ is given by 
\begin{equation}\label{eq:glamdef1}
g_{\lambda}(z) = ( L_{\mathcal{K}} + \lambda I )^{-1} L_{\mathcal{K}} g(z), \quad z \in \clx^p,
\end{equation}
where the integral operator $L_{\mathcal{K}}$ on $L^{2,d}(\clx^p,\pi)$ is defined as
\begin{equation}\label{eq:lkiintegralop2}
L_{\mathcal{K}} (f) (z) = \int_{ \clx^p} \mathcal{K}(z, y) f (y)  \pi(dy), \quad z \in \clx^p, \; f = (f_1,\dots,f_d)' \in L^{2,d}(\clx^p,\pi),
\end{equation}
and where $I$ is the identity operator in $L^{2,d}(\clx^p,\pi)$.

\subsection{Assumptions} \label{se:ass}
In this section we collect the assumptions needed to state our main results. The assumptions can be separated into three sets: the first concerns regression function $g$, the second pertains to the kernel used for estimation, and the third addresses the noise sequence.

\begin{Gassump}[Bounded dynamics]\label{ass:gbounded}
There is some $M_g \in (0,\infty)$ such that $\| g \|_{\infty} \le M_g$.
\end{Gassump}
Assumption \ref{ass:gbounded} helps ensure that the model in \eqref{eq:nonlinearVAR_2} is stationary and geometrically ergodic; see Section \ref{se:geom-erg}.
The next set of assumptions is stated for the kernels $K_1,\dots,K_d$.

\begin{Kassump}[Bounded kernel]\label{ass:kernelbound}
For each $j = 1,\dots,d$ there  is some $\kappa_j \in (0,\infty)$ such that 
\[
\sup\nolimits_{x \in \clx^p} | K_j( x, x) | \le \kappa_j^2.
\]
Furthermore, we set $\kappa \doteq (\sum\nolimits_{j=1}^d \kappa_j^2)^{1/2}$.
\end{Kassump}

\begin{Kassump}[Mercer expansion] \label{ass:separable_kernel_multi}
\;
\label{ass:separable_kernel_multi_parta}     The kernels $K_1,\dots,K_d$ in \eqref{eq:diagkernel1} are separable. That is, for some $M \in \NN \cup \{\infty\}$,  there is a collection of eigenvalues $\lambda_{i,k}$ and eigenfunctions $\phi_{i,j,k}$, $i=1,\dots, d$, $k=1,\dots,M$, $j=1,\dots,N(k)$, such that $\|\phi_{i,j,k}\|_{\infty} < \infty$, and
\begin{equation}\label{eq:mercerexpansion}
    K_i(x,y) =    \sum\nolimits_{k=1}^{M} \lambda_{i,k}  \sum\nolimits_{j=1}^{N(k)} \phi_{i,j,k}(x)  \phi_{i,j,k}(y),
    \end{equation}
where 
\begin{equation} \label{eq:def-N(k)}
N(k) =  | \mathcal{N}(k) |
= \binom{k+dp-1}{dp-1},
\end{equation}
with
\begin{equation} \label{eq:def-mathcalN(k)}
\mathcal{N}(k) = \left\{ (n_{1,1},\dots,n_{1,p}, \dots,n_{d,1},\dots, n_{d,p})  \in \NN_0^{dp}  : \sum\nolimits_{i=1}^{d} \sum\nolimits_{r=1}^{p} n_{i,r} = k\right\}.
\end{equation}
\end{Kassump}

\begin{Rem}
Assumption \ref{ass:separable_kernel_multi} gives a general Mercer representation that allows the kernel to have multivariate input variables. When $d = p = 1$,  \eqref{eq:mercerexpansion} reduces to the standard univariate Mercer kernel expansion, namely  $
K_1(x,y) = \sum\nolimits_{k=1}^{M} \lambda_{k} \phi_{k}(x)\phi_k(y)$.
\end{Rem}

\begin{Kassump}[Eigenfunction growth] \label{ass:eigenfunctiongrowth}
There are $b_1 \in (0,\infty), b_2 \in \RR$ such that for each $M \in \NN$ and   $i = 1,\dots,d$, with 
\begin{equation}\label{eq:assumptionbetasdef}
\beta_{i,j,k} \doteq \lambda^{1/2}_{i,k} \|\phi_{i,j,k}\|_{\infty},
\end{equation}
we have  
\begin{equation}\label{eq:assumptionbetasdef-2}
\sum\nolimits_{k=1}^M \sum\nolimits_{j=1}^{N(k)} \beta^2_{i,j,k} \le b_1 M^{b_2}.
\end{equation}
\end{Kassump}

\begin{Kassump}[Kernel moment bound]\label{ass:momentboundtail}
There is an absolutely summable sequence $\{\alpha_k\}$ such that for all $k \in \NN$, $t \ge p +1$, and $i = 1,\dots,d$, 
\begin{equation}\label{eq:momentboundtaileqn1}
   \lambda_{i,k} \sum\nolimits_{j=1}^{N(k)}  \EE((\phi_{i,j,k}(Y_{t-1}))^2)  = \lambda_{i,k} \sum\nolimits_{j=1}^{N(k)}  \EE((\phi_{i,j,k}(X_{(t-p):(t-1)}))^2) \le \alpha_k.
\end{equation}
Additionally, there are $\beta_1,\beta_2 \in (0,\infty)$ and  an increasing  sequence $\{M(T)\}_{T \in \NN}$ such that for each $T \in \NN$,
\begin{equation}\label{eq:momentboundtaileqn2}
\sum\nolimits_{k=M(T)}^{\infty} \alpha_k \leq \beta_1 T^{-\beta_2}.
\end{equation}
\end{Kassump}

\begin{Rem}
   In Assumption \ref{ass:separable_kernel_multi} we only assume that each eigenfunction is bounded. If $M =\infty$, then there may not be a uniform bound over all eigenfunctions. This is addressed through Assumptions \ref{ass:eigenfunctiongrowth} and \ref{ass:momentboundtail}. Assumption \ref{ass:eigenfunctiongrowth} is trivially satisfied when the kernel has a finite Mercer expansion.
\end{Rem}

\begin{Nassump}[Sub-Gaussian noise] \label{ass:subgaussiannoise}
    For each $t \in \NN$, $i=1,\dots, d$, $\varepsilon_{i,t}$ is sub-Gaussian with variance proxy $\sigma^2_i \in (0,\infty)$. Additionally, $\varepsilon_{i,t}$ admits a density $\psi$ such that for each compact set $A \subseteq \text{supp}(\varepsilon_{i,t})$,  $\inf\nolimits_{x\in A}\psi(x) > 0$.
\end{Nassump}

We refer to Proposition 2.5.2 in \cite{vershynin2018high} for different characterizations of sub-Gaussianity. We also state two slightly more restrictive assumptions for the discussion of our convergence rates below. Both are special cases of Assumption \ref{ass:subgaussiannoise}.

\begin{Nassump}[Gaussian noise] \label{ass:gaussiannoise}
    For each $t \in \NN$, we have that $\varepsilon_t \sim \cln( 0, \Sigma)$, where $\Sigma = \diag(\sigma_1^2,\dots,\sigma^2_d)$.
\end{Nassump}
\begin{Nassump}[Bounded noise] \label{ass:boundednoise}
    Assumption \ref{ass:subgaussiannoise} is satisfied, and there is some $L_{\varepsilon} > 0$ such that for each $t \in \NN$, $\| \varepsilon_t\|_{\infty} \le L_{\varepsilon}$.
\end{Nassump}

\begin{Rem}
    Under Assumptions \ref{ass:gbounded} and \ref{ass:boundednoise}, we can, without loss of generality, assume that $\clx$ is a bounded subset of $\RR^d$. 
\end{Rem}

\section{Concentration results for kernel functions} \label{se:concentration}
In Section \ref{se:main-results} we present concentration results for kernel regression estimators under temporal dependence. Then, in Section \ref{se:examples}, we present several applications in which our  assumptions are satisfied.

\subsection{Main results} \label{se:main-results}
Our main result (Theorem \ref{th:main_consistency}) provides a high probability bound that controls the deviation of the ridge estimator $\widehat{g}_T$ in \eqref{eq:ghatkernelrep1} from the true nonlinear regression function $g$. Recall the function $g_{\lambda}$ introduced in \eqref{eq:glamdef1}.

\begin{Thm} \label{th:main_consistency}
Suppose Assumptions \ref{ass:gbounded}, \ref{ass:kernelbound}--\ref{ass:momentboundtail}, and \ref{ass:subgaussiannoise}.
There are constants $c_1,c_2 \in (0,\infty)$ and $L, \operatorname{c_0}  \in (0,\infty)$ such that if 
    \begin{equation} \label{eq:delta-Th-4.1}
        \delta \geq \frac{1}{\lambda} \sqrt{\frac{\log(T)}{T}} L^{\frac{b_2}{2}} \gamma \operatorname{C_{0}}(g)
        + \| g_{\lambda} - g\|_{\infty},
        \hspace{0.2cm}
        \operatorname{C_{0}}(g) = \operatorname{c_0} \max\{\kappa^2 \| g \|_{\infty},1\}
    \end{equation}
    and
    \begin{equation}\label{eq:gamma-Th-4.1}
        \gamma \geq \sqrt{4 \sigma^2 \log(dT)},
    \end{equation}
then, with probability at least $1- c_1 T^{-c_2}$,
    \begin{equation*}
    \|\widehat{g}_T - g \|_\infty \leq \delta.
\end{equation*}
If Assumption \ref{ass:separable_kernel_multi} is satisfied with $M = \infty$, then we take $L \doteq M(T)$, where $M(T)$ is as in Assumption \ref{ass:momentboundtail}. If Assumption \ref{ass:separable_kernel_multi} is satisfied for some $M < \infty$, then we take $L \doteq M$.
\end{Thm}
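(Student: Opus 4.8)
\textit{Proof strategy.}
The starting point is the split $\widehat{g}_T - g = (\widehat{g}_T - g_{\lambda}) + (g_{\lambda} - g)$, in which $\|g_{\lambda}-g\|_{\infty}$ is exactly the second summand of the lower bound for $\delta$ in \eqref{eq:delta-Th-4.1}; hence it suffices to bound $\|\widehat{g}_T-g_{\lambda}\|_{\infty}$ by the first summand, $\tfrac1\lambda\sqrt{\log(T)/T}\,L^{b_2/2}\gamma\,\operatorname{C_{0}}(g)$, with probability at least $1-c_1T^{-c_2}$. To this end I would pass to the Mercer coordinates of Assumption~\ref{ass:separable_kernel_multi}, writing $\mathcal{K}(\cdot,y)$ through the eigenpairs $(\lambda_{i,k},\phi_{i,j,k})$ and truncating the index set at level $k\le L$ (with $L=M(T)$ when $M=\infty$, and $L=M$ otherwise); the high-frequency parts of $\widehat{g}_T$ and $g_{\lambda}$ are controlled \emph{deterministically} via Assumption~\ref{ass:momentboundtail}, since the tail bound $\sum_{k\ge M(T)}\alpha_k\le\beta_1T^{-\beta_2}$ on the population, $L^2$-weighted eigenfunction mass forces the corresponding remainders to be $O(T^{-\beta_2/2})$, which is absorbed into the constants.

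On the truncated block, both $\widehat{g}_T$ and $g_{\lambda}$ are (up to those remainders) of the form $z\mapsto\sum_{i}\sum_{k\le L}\sum_{j}\sqrt{\lambda_{i,k}}\,\phi_{i,j,k}(z)\,\theta_{i,j,k}$, for coefficient vectors $\widehat{\theta}=(\widehat{\Sigma}_L+\lambda I)^{-1}\widehat{c}$ — the ridge solution in the truncated feature space, with $\widehat{\Sigma}_L$ the empirical second-moment operator of the feature maps $\Phi_L(Y_{t-1})$ and $\widehat{c}$ the empirical cross-moment of features and responses — and its population analogue $\theta_{\lambda}=(\Sigma_L+\lambda I)^{-1}\Sigma_L\theta_L^{*}$. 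Applying Cauchy--Schwarz coordinatewise together with $|\phi_{i,j,k}(z)|\le\|\phi_{i,j,k}\|_{\infty}$ gives, for each output coordinate $i$,
\[
\sup_{z\in\clx^p}\Bigl|\sum_{k\le L}\sum_{j}\sqrt{\lambda_{i,k}}\,\phi_{i,j,k}(z)\,(\widehat{\theta}-\theta_{\lambda})_{i,j,k}\Bigr|
\;\le\;\Bigl(\sum_{k\le L}\sum_{j}\beta_{i,j,k}^{2}\Bigr)^{1/2}\bigl\|(\widehat{\theta}-\theta_{\lambda})_{i,\cdot}\bigr\|
\;\le\; b_1^{1/2}L^{b_2/2}\,\bigl\|\widehat{\theta}-\theta_{\lambda}\bigr\|,
\]
by Assumption~\ref{ass:eigenfunctiongrowth} and the definition of $\beta_{i,j,k}$ in \eqref{eq:assumptionbetasdef}; this is where the factor $L^{b_2/2}$ enters, and it remains to bound $\|\widehat{\theta}-\theta_{\lambda}\|$ by $\tfrac1\lambda\sqrt{\log(T)/T}\,\gamma\,\operatorname{C_{0}}(g)$, up to absolute constants, with high probability.

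Inserting $X_t=g(Y_{t-1})+\varepsilon_t$ and using resolvent identities together with $\|(\widehat{\Sigma}_L+\lambda I)^{-1}\|\le1/\lambda$, one obtains
\[
\widehat{\theta}-\theta_{\lambda}
\;=\;(\widehat{\Sigma}_L+\lambda I)^{-1}\tfrac1{T-p}\!\sum_t\Phi_L(Y_{t-1})\varepsilon_t
\;+\;(\widehat{\Sigma}_L+\lambda I)^{-1}\bigl(\widehat{c}_g-c_g\bigr)
\;-\;(\widehat{\Sigma}_L+\lambda I)^{-1}(\widehat{\Sigma}_L-\Sigma_L)\theta_{\lambda},
\]
up to the high-frequency remainders already handled, where $\widehat{c}_g=\tfrac1{T-p}\sum_t\Phi_L(Y_{t-1})g(Y_{t-1})$ and $c_g=\EE[\Phi_L(Y)g(Y)]$. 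Each of the three stochastic terms is a centered average of a functional of the chain $\{Y_t\}$, which is geometrically ergodic by Lemma~\ref{prop:geomergodic}; crucially, $(\widehat{\Sigma}_L-\Sigma_L)\theta_{\lambda}$ should be read as $\tfrac1{T-p}\sum_t[\Phi_L(Y_{t-1})g_{\lambda}(Y_{t-1})-\EE(\,\cdot\,)]$, since $\Phi_L(Y_{t-1})'\theta_{\lambda}=g_{\lambda}(Y_{t-1})$, so that its fluctuation is governed by the variance proxy $\kappa^2\|g_{\lambda}\|_{L^2(\pi)}^2\le\kappa^2\|g\|_{\infty}^2$ rather than by $\|g_{\lambda}\|_{\infty}^2$ — this is what keeps the overall bound at order $1/\lambda$. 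I would then invoke the Hoeffding/Bernstein-type concentration inequality for quadratic and bilinear forms under temporal dependence from Section~\ref{se:auxiliary results} for the two ``signal'' terms, and for the noise term $\tfrac1{T-p}\sum_t\Phi_L(Y_{t-1})\varepsilon_t$ — a sum of martingale differences, as $\varepsilon_t$ is mean zero and independent of $\sigma(Y_{t-1})$ — a conditional sub-Gaussian bound using Assumption~\ref{ass:subgaussiannoise}, followed by a union bound over the $d$ output coordinates and the $O(\mathrm{poly}(L))$ truncated Mercer terms, which yields the requirement $\gamma\ge\sqrt{4\sigma^2\log(dT)}$. Each of these holds at rate $\sqrt{\log(T)/T}$ on an event of probability at least $1-c_1T^{-c_2}$, the polynomial decay coming from feeding a $\sqrt{\log T}$-sized deviation into the exponential tails. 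Intersecting the events, collecting the three contributions inside $\tfrac1\lambda(\cdots)$, and absorbing $\kappa^2\|g\|_{\infty}$ with the constants into $\operatorname{C_{0}}(g)=\operatorname{c_0}\max\{\kappa^2\|g\|_{\infty},1\}$ (using $\gamma\ge1$ to merge the noise and signal scales) gives the desired bound on $\|\widehat{g}_T-g_{\lambda}\|_{\infty}$; adding back $\|g_{\lambda}-g\|_{\infty}$ completes the argument.

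The main obstacle is the concentration step. The forms $\widehat{\Sigma}_L-\Sigma_L$, $\widehat{c}_g-c_g$ and the noise cross-term are all built from the single Markov chain $\{Y_t\}$ that generates the regression, so the ``matrix'' or ``weight vector'' defining each form is itself random and dependent on the chain — and, for the cross-term, on the very noise it is paired against. Establishing a Hoeffding/Bernstein-type bound in this dependent regime, sharp enough to retain the variance-proxy scaling that keeps the $\lambda$-dependence down to $1/\lambda$ and uniform enough in $z\in\clx^p$ to control the sup-norm rather than an $L^2$- or RKHS-norm, is the delicate part; it is precisely the dependent-data quadratic-form inequality advertised as the paper's main technical contribution.
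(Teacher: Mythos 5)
Your architecture is essentially the paper's: peel off the bias $\|g_{\lambda}-g\|_{\infty}$, decompose the stochastic error via resolvent identities into a noise term plus two empirical-operator perturbation terms (your $\widehat{c}_g-c_g$ and $(\widehat{\Sigma}_L-\Sigma_L)\theta_{\lambda}$ are exactly the two summands the paper isolates in Lemma \ref{lem:newproximalvstrueregfunction} through the intermediate quantity $g_{X,\lambda}=(L_{\calK,X}+\lambda I)^{-1}L_{\calK,X}g$), and control each by concentration for the geometrically ergodic chain $\{Y_t\}$, with the Mercer truncation producing the $L^{b_2/2}$ factor via Assumption \ref{ass:eigenfunctiongrowth} and Assumption \ref{ass:momentboundtail} handling the tail. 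Reading $(\widehat{\Sigma}_L-\Sigma_L)\theta_{\lambda}$ as an empirical average of $\Phi_L(Y_{t-1})g_{\lambda}(Y_{t-1})$ is also precisely the paper's device.

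Two steps, however, would not go through as written. First, you truncate $\widehat{g}_T$ and $g_{\lambda}$ themselves and claim the high-frequency remainders are controlled \emph{deterministically} in sup-norm by Assumption \ref{ass:momentboundtail}. That assumption only bounds the expected eigenfunction mass $\lambda_{i,k}\sum_j\EE\bigl(\phi_{i,j,k}(Y_{t-1})^2\bigr)$, which gives no pointwise control; and Assumption \ref{ass:eigenfunctiongrowth} bounds only the partial sums $\sum_{k\le M}\sum_j\beta_{i,j,k}^2\le b_1M^{b_2}$, which diverge as $M\to\infty$ for the Gaussian kernel ($b_2=dp/2-1>0$ when $dp>2$), so the sup-norm of the tail of the expansion of $\widehat{g}_T$ is not controlled by the hypotheses. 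The paper never truncates the functions: it bounds $\|\widehat{g}_T-g_{X,\lambda}\|_{\bbH}$ and $\|g_{X,\lambda}-g_{\lambda}\|_{\bbH}$ and converts to sup-norm via $\|f\|_{\infty}\le\kappa\|f\|_{\bbH}$ (Lemma \ref{lem:boundinfinitybyHnorm}); truncation occurs only inside the scalar quadratic form $\eta'\calK(X,X)\eta$ (Theorem \ref{th:concentration_quafratic_form}), where the tail $B_{i,M}$ is dispatched by Markov's inequality and contributes to the failure probability, not to a deterministic additive remainder. Second, your assertion that the fluctuation of $(\widehat{\Sigma}_L-\Sigma_L)\theta_{\lambda}$ is governed by the variance proxy $\kappa^2\|g_{\lambda}\|^2_{L^2(\pi)}\le\kappa^2\|g\|_{\infty}^2$ rather than by $\|g_{\lambda}\|_{\infty}$ presupposes a Bernstein-type (variance-sensitive) inequality for functionals of the chain; the tool actually established and used here (the separately-bounded, McDiarmid-type inequality of \cite{dedecker2015subGaussian} invoked in Lemmas \ref{le:new_ana_LIU_15} and \ref{pro:separetelybounded}) is range-based and yields a bound proportional to $\|g_{\lambda}\|_{\infty}\le\kappa\lambda^{-1/2}\|g\|_{\infty}$. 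You correctly identify this as the delicate point, but the inequality you would need is neither supplied by you nor the one proved in the paper.
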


Note that Theorem \ref{th:main_consistency} is expressed in terms of $\operatorname{C_{0}}(g)$ and a constant $\operatorname{c_0}$.  
Here $\operatorname{C_{0}}(g)$ depends on $g$, while $\operatorname{c_0}$ subsumes all constants including $d$ and $p$ and may change throughout the proofs.

In view of Theorem \ref{th:main_consistency}, it only remains to control the quantity $\| g_{\lambda} - g\|_{\infty}$ in \eqref{eq:delta-Th-4.1}. While this deterministic term is, for general functions $g$, difficult to control, an estimate is provided in Theorem \ref{thm:dev.deterministic.bound} that, under the somewhat restrictive Assumption \ref{ass:boundednoise}, bounds this term. 

Theorems \ref{th:finiteMercer} and \ref{th:concentration_quafratic_form} below provide concentration inequalities for certain quadratic forms arising in the proof of Theorem \ref{th:main_consistency} and may be of independent interest. To state these results, we introduce functions $c_{1,T}$ and $c_{2,T}$ given by  
\begin{equation} \label{eq:functions-g1-g2}
    c_{1,T}(\delta, \gamma, M) = 
    \frac{T}{\gamma^2} \left( \delta (d b_1 M^{b_2} )^{-\frac{1}{2}}   - \frac{\sigma }{\sqrt{T}} \right)^2,
    \hspace{0.2cm}
    c_{2,T}(\gamma) = \left( \gamma - \sqrt{2 \sigma^2 \log(dT)} \right)^2/(2\sigma^2).
\end{equation}
Note that Theorems \ref{th:finiteMercer} and \ref{th:concentration_quafratic_form} address the cases when Assumption \ref{ass:separable_kernel_multi} is satisfied with finite {$(M < \infty)$} and infinite {$(M = \infty)$} Mercer representations, respectively. Kernels with an infinite Mercer representation require the additional Assumption \ref{ass:momentboundtail} on the variance of the eigenfunctions evaluated at the data.  

\begin{Thm} \label{th:finiteMercer}
Suppose Assumptions \ref{ass:gbounded}, \ref{ass:kernelbound}--\ref{ass:eigenfunctiongrowth}, and \ref{ass:subgaussiannoise}.
   Then, for any $\delta, \gamma  \in (0,\infty)$, there is a $c \in (0,\infty)$, such that for all $T$, 
    \begin{equation*}
        \Prob \left( \eta' \mathcal{K}(X,X) \eta > \delta^2 \right)
        \leq
        d\exp\left(- c c_{1,T}(\delta, \gamma, M) \right)
        +
        d\exp\left(- c_{2,T}(\gamma) \right).
    \end{equation*}
There are constants $c_1,c_2 \in (0,\infty)$ and $M, \operatorname{c_0} \in (0,\infty)$ such that if 
    \begin{equation} \label{eq:delta-Th-4.2}
        \delta \geq \operatorname{c_0} \sqrt{\frac{\log(T)}{T}} M^{\frac{b_2}{2}} \gamma,
        \end{equation}
        and
        \begin{equation}\label{eq:gamma-Th-4.2}
        \gamma \geq \sqrt{4 \sigma^2 \log(dT)},
    \end{equation}
then, with probability at least $1- c_1 T^{-c_2}$, 
\begin{equation}\eta' \mathcal{K}(X,X) \eta \leq \delta^2.
\end{equation}
\end{Thm}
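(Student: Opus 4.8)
The plan is to reduce the quadratic form $\eta' \mathcal{K}(X,X)\eta$ to a sum (over the $d$ coordinate blocks) of quadratic forms of the single-kernel empirical matrices $K_i(X,X)$, and then to control each such scalar quadratic form by exploiting the finite Mercer expansion. Since $\mathcal{K}(X,X) = \diag(K_1(X,X),\dots,K_d(X,X))$ and $\eta = \vecop(\mathcal{E})$ is the stacked vector of the $\varepsilon_{i,t}$'s, we have $\eta'\mathcal{K}(X,X)\eta = \sum_{i=1}^d \eta_i' K_i(X,X) \eta_i$ where $\eta_i = (\varepsilon_{i,p+1},\dots,\varepsilon_{i,T})'$. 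By a union bound it suffices to show each term exceeds $\delta^2/d$ with probability at most $\exp(-c\,c_{1,T}) + \exp(-c_{2,T})$. For the finite Mercer case, write $K_i(X_{(s-p):(s-1)}, X_{(t-p):(t-1)}) = \sum_{k=1}^{M}\lambda_{i,k}\sum_{j=1}^{N(k)} \phi_{i,j,k}(Y_{s-1})\phi_{i,j,k}(Y_{t-1})$, so that $K_i(X,X) = \Phi_i \Phi_i'$ for a feature matrix $\Phi_i$ whose rows are indexed by $t$ and whose columns are indexed by $(k,j)$ with entry $\lambda_{i,k}^{1/2}\phi_{i,j,k}(Y_{t-1})$. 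Hence $\eta_i' K_i(X,X)\eta_i = \|\Phi_i'\eta_i\|^2 = \sum_{k,j}\big(\sum_{t}\lambda_{i,k}^{1/2}\phi_{i,j,k}(Y_{t-1})\varepsilon_{i,t}\big)^2$, a sum of squares of (nearly) martingale-type sums.

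The next step is to bound each inner sum $S_{i,j,k} \doteq \sum_{t=p+1}^T \lambda_{i,k}^{1/2}\phi_{i,j,k}(Y_{t-1})\varepsilon_{i,t}$. Here I would condition on the sigma-field generated by the $Y_t$'s (equivalently, the $X_t$'s), exploiting that $\varepsilon_{i,t}$ is sub-Gaussian with variance proxy $\sigma_i^2$ (Assumption \ref{ass:subgaussiannoise}) and independent of $Y_{t-1} = X_{(t-p):(t-1)}$; in fact $\{\varepsilon_{i,t}\}$ forms a sub-Gaussian martingale-difference sequence relative to the natural filtration, so $S_{i,j,k}$ is, conditionally on $X$, sub-Gaussian with variance proxy $\sigma_i^2 \sum_t \lambda_{i,k}\phi_{i,j,k}(Y_{t-1})^2$. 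Summing the conditional expectations, $\EE[\eta_i'K_i(X,X)\eta_i \mid X] \le \sigma_i^2 \sum_{k,j}\sum_t \lambda_{i,k}\phi_{i,j,k}(Y_{t-1})^2$. Taking expectations and using the Mercer expansion gives $\EE[\eta_i'K_i(X,X)\eta_i] = \sigma_i^2 \sum_t \EE[K_i(Y_{t-1},Y_{t-1})] \le \sigma_i^2 (T-p)\kappa_i^2$ via Assumption \ref{ass:kernelbound}; more relevantly, Assumption \ref{ass:eigenfunctiongrowth} controls $\sum_{k,j}\|\phi_{i,j,k}\|_\infty^2\lambda_{i,k} \le b_1 M^{b_2}$, which bounds the sum of variance proxies by $\sigma_i^2 (T-p)\, b_1 M^{b_2}$ uniformly over the sample path — this uniform bound is what lets the conditioning argument work without a concentration step for the $Y_t$'s. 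The form of $c_{1,T}(\delta,\gamma,M)$, with its $(\delta(d b_1 M^{b_2})^{-1/2} - \sigma/\sqrt{T})^2 \cdot T/\gamma^2$ structure, strongly suggests: first bound the number of relevant eigenpairs and the deviation of $\|\Phi_i'\eta_i\|$ from its conditional mean using a sub-Gaussian (Hoeffding-type) tail for the quadratic form conditionally on $X$; then absorb the $\sigma/\sqrt{T}$ term as the "expectation" contribution. The auxiliary $\gamma$ and the term $c_{2,T}(\gamma) = (\gamma - \sqrt{2\sigma^2\log(dT)})^2/(2\sigma^2)$ handle a preliminary truncation: on the event $\{\max_{i,t}|\varepsilon_{i,t}| \le \gamma\}$, whose complement has probability at most $dT \exp(-\gamma^2/(2\sigma^2)) \le d\exp(-c_{2,T}(\gamma))$ by a sub-Gaussian maximal inequality (with $\gamma \ge \sqrt{4\sigma^2\log(dT)}$ in \eqref{eq:gamma-Th-4.2} making this small), one works with bounded noise of magnitude $\gamma$, for which a Hoeffding/Azuma bound on the martingale sum $S_{i,j,k}$ yields sub-Gaussian tails with variance proxy scaling like $\gamma^2 b_1 M^{b_2}$, producing exactly the exponent $-c\, c_{1,T}(\delta,\gamma,M)$.

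For the second part of the theorem, I substitute the choice \eqref{eq:delta-Th-4.2}, $\delta \ge \operatorname{c_0}\sqrt{\log(T)/T}\,M^{b_2/2}\gamma$, into the tail bound. With $\gamma \ge \sqrt{4\sigma^2\log(dT)}$ from \eqref{eq:gamma-Th-4.2}, the quantity $\delta(d b_1 M^{b_2})^{-1/2} - \sigma/\sqrt T$ is, for $\operatorname{c_0}$ large enough, bounded below by a constant multiple of $\sqrt{\log(T)/T}\,\gamma (b_1)^{-1/2}$, so $c_{1,T}(\delta,\gamma,M) \gtrsim \log T$; likewise $c_{2,T}(\gamma) \ge \sigma^2\log(dT) \gtrsim \log T$ since $\gamma - \sqrt{2\sigma^2\log(dT)} \ge (1-1/\sqrt2)\gamma \ge \sqrt{2\sigma^2\log(dT)}$. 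Both exponential terms are then $\le c_1 T^{-c_2}$ for suitable $c_1, c_2 > 0$, giving the claimed high-probability bound $\eta'\mathcal{K}(X,X)\eta \le \delta^2$.

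The main obstacle I anticipate is establishing the conditional sub-Gaussian tail for the quadratic form $\|\Phi_i'\eta_i\|^2 = \sum_{k,j}S_{i,j,k}^2$ with the right dependence on $M$ (i.e., $b_1 M^{b_2}$ rather than, say, $N(M)$ or the trace). A naive Hanson–Wright application to the conditional (sub-)Gaussian vector $\eta_i$ would give a bound in terms of $\|\Phi_i'\Phi_i\|$ and $\|\Phi_i'\Phi_i\|_{\mathrm{op}}$; controlling the operator norm uniformly in the sample path is delicate because the $Y_t$ are dependent and the eigenfunctions are only individually bounded. The resolution is that Assumption \ref{ass:eigenfunctiongrowth} gives a pathwise bound $\sum_{k,j}\lambda_{i,k}\phi_{i,j,k}(Y_{t-1})^2 \le b_1 M^{b_2}$ for each $t$, hence $\tr(\Phi_i'\Phi_i) \le (T-p) b_1 M^{b_2}$ and in fact $\|\Phi_i\Phi_i'\|_{\mathrm{op}} \le \sup_y K_i(y,y)\cdot(T-p) \le \kappa_i^2(T-p)$ — but crucially one wants the variance-proxy sum $\sum_{k,j}(\text{row sums})$, which is bounded by $b_1 M^{b_2}$ times the number of time points without any spectral-norm estimate. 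Getting the bookkeeping right so that the deterministic ($\sigma/\sqrt T$) part and the fluctuation part ($\delta$-dependent) separate cleanly into the form of $c_{1,T}$ — and verifying that the implicit constant $c$ can be taken uniform in $T$ — is where the real work lies; everything else is a union bound plus plugging in.
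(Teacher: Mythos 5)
Your decomposition $\eta'\mathcal{K}(X,X)\eta=\sum_{i}\eta_i'K_i(X,X)\eta_i=\sum_i\sum_{k,j}S_{i,j,k}^2$, the union bound over $i$, the role of $\sum_{k,j}\beta_{i,j,k}^2\le b_1M^{b_2}$, and the truncation step handled by $c_{2,T}(\gamma)$ all match the paper. But the core concentration step has a genuine gap: you propose to control $\|\Phi_i'\eta_i\|^2$ ``conditionally on $X$,'' treating $\eta_i$ as a sub-Gaussian vector given the design. In the autoregressive model this is vacuous, because $\varepsilon_{i,t}=X_{i,t}-g_i(X_{t-1},\dots,X_{t-p})$ is a deterministic function of $X_1,\dots,X_T$; conditioning on the full trajectory determines $\eta$ exactly, so there is no conditional sub-Gaussian tail to invoke. (The paper flags precisely this point in Section 3.2.3: a conditional Hanson--Wright argument is available only in the stochastic regression special case where predictors and errors are independent.) The obstacle you anticipate at the end --- controlling $\|\Phi_i'\Phi_i\|_{\mathrm{op}}$ --- is not the real one; the real one is that the conditioning is illegitimate.

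What the paper does instead, and what your proposal is missing, is the following mechanism. First, it pulls $\sum_{k,j}\beta_{i,j,k}^2\le b_1M^{b_2}$ out of the sum of squares to reduce the event $\{A_{i,M}>\delta^2/d\}$ to a bound on the \emph{maximum} over $(k,j)$ of the normalized linear statistics $F_{i,M}=\max_{k,j}|T^{-1}\sum_t\varepsilon_{i,t}\beta_{i,j,k}^{-1}\lambda_{i,k}^{1/2}\phi_{i,j,k}(Y_{t-1})|$ (after truncating $|\varepsilon_{i,t}|\le\gamma$); this avoids any union bound over eigenpairs. Second, it bounds $\EE F_{i,M}\le\sqrt{3}\,\sigma/\sqrt{T}$ by a second-moment computation that uses the martingale-difference structure for the cross terms and explicitly controls the bias $\EE[\varepsilon_{i,t}\bm{1}_{\{|\varepsilon_{i,t}|\le\gamma\}}]\neq0$ introduced by truncation --- a point your sketch does not address, and which is where the $\sigma/\sqrt{T}$ inside $c_{1,T}$ actually comes from. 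Third, and crucially, the fluctuation $F_{i,M}-\EE F_{i,M}$ is controlled not by Azuma or Hanson--Wright but by a bounded-differences concentration inequality for geometrically ergodic Markov chains (Theorem 0.2 of Dedecker--Gou\"ezel), applied to $F_{i,M}$ as a separately bounded function (by $2\gamma/T$) of the chain $\{Y_t\}$; the geometric ergodicity established in Lemma \ref{prop:geomergodic} is the ingredient that makes the dependent-data concentration go through, and it never appears in your argument. Your alternative martingale route (Azuma for the conditionally sub-Gaussian increments $\varepsilon_{i,t}\phi_{i,j,k}(Y_{t-1})$ with deterministic proxy $\sigma_i^2\beta_{i,j,k}^2$, plus a union bound over the finitely many eigenpairs) could in principle be carried through for fixed $M$, but you would still need to repair the truncation bias and you do not develop it; as written, the proof does not close.
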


While Theorem \ref{th:finiteMercer} provides a concentration inequality for a quadratic form of an empirical  kernel matrix whose associated kernel has a finite Mercer representation, the next theorem assumes an infinite representation.

\begin{Thm} \label{th:concentration_quafratic_form}
Suppose Assumptions \ref{ass:gbounded},  \ref{ass:kernelbound},  \ref{ass:separable_kernel_multi} with $M=\infty$, \ref{ass:eigenfunctiongrowth}, \ref{ass:momentboundtail}, and \ref{ass:subgaussiannoise}. Let $M(T)$ be as in Assumption \ref{ass:momentboundtail}. Then, for any $\delta, \gamma \in (0,\infty)$, there is a $c \in (0,\infty)$ such that for all $T$, with $L \doteq M(T)$,
\begin{equation} \label{eq:delta-Th-4.3-1}
    \Prob \left( \eta' \mathcal{K}(X,X) \eta > \delta^2 \right)
    \leq
    d\exp\left(- c c_{1,T}(\delta, \gamma, L) \right)
    +
    d\exp\left(- c_{2,T}(\gamma) \right)
    +
    \frac{ 2 d^2 \sigma^2}{ \delta^2 T} \beta_1 T^{-\beta_2}.
    \end{equation}    
There are constants $c_1,c_2 \in (0,\infty)$ and $\operatorname{c_0} \in (0,\infty)$ such that if 
    \begin{equation} \label{eq:delta-Th-4.3}
        \delta \geq \operatorname{c_0} \sqrt{\frac{\log(T)}{T}} L^{\frac{b_2}{2}} \gamma,
        \end{equation}
        and
    \begin{equation}\label{eq:gamma-Th-4.3}
        \gamma \geq \sqrt{4 \sigma^2 \log(dT)}
    \end{equation}
then, with probability at least $1- c_1 T^{-c_2}$, 
\begin{equation} \label{eq:eta-quadratic-form}
\eta' \mathcal{K}(X,X) \eta \leq \delta^2.
\end{equation}
\end{Thm}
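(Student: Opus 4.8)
The plan is to separate a finite-rank piece of the kernel, to which Theorem~\ref{th:finiteMercer} applies directly, from a remainder that is negligible by a crude first-moment estimate; Assumption~\ref{ass:momentboundtail} is tailored precisely so the latter goes through. First I would use the block structure: since $\mathcal K(X,X) = \diag(K_1(X,X),\dots,K_d(X,X))$ and $\eta = \vecop(\mathcal E)$, with $\eta_i \doteq (\varepsilon_{i,p+1},\dots,\varepsilon_{i,T})'$ we have $\eta'\mathcal K(X,X)\eta = \sum_{i=1}^d \eta_i' K_i(X,X)\eta_i$. Setting $L \doteq M(T)$, I would truncate each Mercer series \eqref{eq:mercerexpansion} at level $L$, writing $K_i = K_i^{(L)} + R_i^{(L)}$, where $K_i^{(L)}$ retains the terms with $k\le L$ and $R_i^{(L)}$ those with $k>L$. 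Both are symmetric positive semidefinite kernels, so the associated block-diagonal empirical matrices $\mathcal K^{(L)}(X,X)$ and $\mathcal R^{(L)}(X,X)$ (formed as in \eqref{eq:diagempkernelmatrix}) satisfy $\eta'\mathcal K(X,X)\eta = \eta'\mathcal K^{(L)}(X,X)\eta + \eta'\mathcal R^{(L)}(X,X)\eta$ with both summands nonnegative, and a union bound gives $\Prob(\eta'\mathcal K(X,X)\eta > \delta^2) \le \Prob(\eta'\mathcal K^{(L)}(X,X)\eta > \delta^2/2) + \Prob(\eta'\mathcal R^{(L)}(X,X)\eta > \delta^2/2)$.

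For the head term I would invoke Theorem~\ref{th:finiteMercer}. The truncated kernels $K_1^{(L)},\dots,K_d^{(L)}$ have a finite Mercer expansion of length $L$ and inherit the hypotheses needed there: Assumption~\ref{ass:kernelbound} holds with the \emph{same} $\kappa_j$ since $K_j^{(L)}(x,x)\le K_j(x,x)\le\kappa_j^2$; Assumption~\ref{ass:eigenfunctiongrowth} holds with the same $b_1,b_2$ since truncation only decreases the left-hand side of \eqref{eq:assumptionbetasdef-2}; and Assumptions~\ref{ass:gbounded} and~\ref{ass:subgaussiannoise} do not involve the kernel. Applying Theorem~\ref{th:finiteMercer} to $\mathcal K^{(L)}(X,X)$ at level $\delta^2/2$ bounds the head term by $d\exp(-c\,c_{1,T}(\delta,\gamma,L)) + d\exp(-c_{2,T}(\gamma))$ (recall \eqref{eq:functions-g1-g2}); the rescaling $\delta\mapsto\delta/\sqrt2$ inside $c_{1,T}$ only changes $c$ in the regime of \eqref{eq:delta-Th-4.3}. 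This reproduces the first two terms of \eqref{eq:delta-Th-4.3-1}.

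For the tail term I would use Markov's inequality. Expanding through the Mercer series of $R_i^{(L)}$, $\eta'\mathcal R^{(L)}(X,X)\eta = \sum_{i=1}^d\sum_{k=L+1}^{\infty}\lambda_{i,k}\sum_{j=1}^{N(k)} Z_{i,j,k}^2$ with $Z_{i,j,k} \doteq \sum_{t=p+1}^T \varepsilon_{i,t}\phi_{i,j,k}(Y_{t-1})$. Since $\{\varepsilon_t\}$ is a mean-zero innovation sequence and, by causality, $\varepsilon_t$ is independent of $Y_{t-1}$, the cross terms in $\EE[Z_{i,j,k}^2]$ vanish, $\EE[\varepsilon_{i,t}^2(\phi_{i,j,k}(Y_{t-1}))^2] = \EE[\varepsilon_{i,t}^2]\,\EE[(\phi_{i,j,k}(Y_{t-1}))^2]$, and hence $\EE[Z_{i,j,k}^2] \le \sigma^2\sum_{t=p+1}^T\EE[(\phi_{i,j,k}(Y_{t-1}))^2]$; Tonelli together with Assumption~\ref{ass:momentboundtail} (equations \eqref{eq:momentboundtaileqn1} and \eqref{eq:momentboundtaileqn2} with $L=M(T)$) then give $\EE[\eta'\mathcal R^{(L)}(X,X)\eta] \lesssim d\sigma^2 T\,\beta_1 T^{-\beta_2}$ (the remaining $T$-powers in \eqref{eq:delta-Th-4.3-1} coming from the normalization of $\eta$), so Markov at level $\delta^2/2$ produces the last term of \eqref{eq:delta-Th-4.3-1}. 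Combining the two contributions proves \eqref{eq:delta-Th-4.3-1} for all $T$ and all $\delta,\gamma\in(0,\infty)$. To deduce the high-probability statement I would take $\gamma$ as in \eqref{eq:gamma-Th-4.3}, so that $c_{2,T}(\gamma)\gtrsim\log(dT)$, and $\delta$ as in \eqref{eq:delta-Th-4.3} with $\operatorname{c_0}$ large enough that $\delta(db_1L^{b_2})^{-1/2} - \sigma/\sqrt T \ge \tfrac{1}{2}\delta(db_1L^{b_2})^{-1/2}$; then $c_{1,T}(\delta,\gamma,L)\gtrsim\log T$ and, since $\delta^2\gtrsim\tfrac{\log T}{T}\gamma^2$, the Markov term is $\lesssim T^{-\beta_2}$, so all three contributions are $O(T^{-c_2})$ for a suitable $c_2>0$.

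The main obstacle is the bookkeeping in the head step: one must verify that the truncated kernels satisfy the hypotheses of Theorem~\ref{th:finiteMercer} with constants \emph{uniform in $L=M(T)$} — in particular that the bounded-kernel constant $\kappa$ is preserved under truncation — since an $L$-dependent constant would corrupt the $L^{b_2/2}$ scaling required by \eqref{eq:delta-Th-4.3}. The tail step is routine once Assumption~\ref{ass:momentboundtail} is available; the only care needed there is to invoke the innovation/causality structure of $\{\varepsilon_t\}$ to discard the cross terms in $\EE[Z_{i,j,k}^2]$ and to factor the diagonal expectations.
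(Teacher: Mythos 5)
Your proposal is correct and follows essentially the same route as the paper: both decompose the Mercer series at level $L = M(T)$ into a finite head handled by Theorem~\ref{th:finiteMercer} and an infinite tail controlled by Markov's inequality together with Assumption~\ref{ass:momentboundtail}. The only cosmetic difference is that you truncate at the kernel level ($K_i = K_i^{(L)} + R_i^{(L)}$) while the paper truncates the eigenfunction sum directly via $A_{i,M}$ and $B_{i,M}$ — these are the same decomposition, and your extra care about PSD-ness of both pieces and uniformity of $\kappa, b_1, b_2$ under truncation is a cleaner way of justifying the step that the paper leaves implicit.
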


\emph{Constants:}    
Note that the constant $b_2$ in \eqref{eq:delta-Th-4.2} and \eqref{eq:delta-Th-4.3} stems from Assumption \ref{ass:momentboundtail} and is kernel-dependent. Similarly, the last summand in \eqref{eq:delta-Th-4.3-1} depends on $\beta_1,\beta_2$ which are also kernel-dependent and stem from Assumption \ref{ass:eigenfunctiongrowth}. One of the key proof ideas of Theorem \ref{th:concentration_quafratic_form} is to separate the infinite Mercer representation into a finite truncation and a remainder term. The remainder term is then bounded by the last summand in \eqref{eq:delta-Th-4.3-1}. The choice of $M(T)$ in Theorem \ref{th:concentration_quafratic_form} also depends on the kernel. For instance, the Gaussian kernel requires $M(T) = \log(T)$ while the periodic kernel requires  $M(T) = T$; in this case, $b_2 =0$ in \eqref{eq:delta-Th-4.3}, so the rate is not effected.

\textit{Rate of convergence:}
We note that Theorems \ref{th:finiteMercer} and \ref{th:concentration_quafratic_form} achieve the optimal convergence rate $\sqrt{\log(T)/T}$ (potentially up to some logarithmic term). The rate is considered optimal in the sense that it coincides with the rate one would obtain if $\mathcal{K}(X,X)$ were replaced by a deterministic matrix with bounded eigenvalues.
Whether or not we get an additional logarithmic term depends on two aspects. 

The first aspect relates to the errors in the underlying regression problem \eqref{eq:nonlinearVAR_2}. 
If the errors in \eqref{eq:nonlinearVAR_2} are bounded, that is, they satisfy Assumption \ref{ass:boundednoise}, we can take $\gamma = L_{\varepsilon}$ in \eqref{eq:gamma-Th-4.2} and \eqref{eq:gamma-Th-4.3} and there is no additional logarithmic term caused by the errors.

The second aspect that can lead to an additional logarithmic term only applies 
when Assumption \ref{ass:kernelbound} is satisfied with $M=\infty$, namely when the kernels admit an infinite Mercer representation. Then, for instance, the Gaussian kernel, satisfies Assumption \ref{ass:eigenfunctiongrowth} with $M = \log(T)$ and $b_2 = dp/2 -1$ (see Lemma \ref{pro:sumbetasgaussian}). Whenever $dp > 2$, this results in an additional logarithmic term in the choices of $\delta$ in Theorems \ref{th:main_consistency} and \ref{th:concentration_quafratic_form}. 

\textit{Connection to U-statistics:}
We point out that a crucial assumption for Theorem \ref{th:concentration_quafratic_form} is the separability of the kernel (Assumption \ref{ass:separable_kernel_multi_parta}). In particular, there are certain U-statistics that may not satisfy a Mercer representation. A general result for U-statistics can be found in \cite{duchemin2023concentration}, who consider U-statistics of order two for Markov chains. Their results require a reverse Doeblin condition. While $\eta' \mathcal{K}(X,X) \eta $ in \eqref{eq:eta-quadratic-form} can be written as a U-statistic of a Markov chain, their results cannot be applied since the Markov chain does not satisfy the reverse Doeblin condition.

\subsection{Examples} \label{se:examples}
In this section we discuss several scenarios that are covered by our assumptions.

\subsubsection{Infinite Mercer representation} \label{se:example-infinite}
In this section, we present several examples of kernels that satisfy Assumption \ref{ass:separable_kernel_multi} with $M = \infty$ {and for which our results hold.}

\vspace{0.2cm}
\noindent
\textit{Gaussian kernel:}
Suppose the kernels $K_1,\dots,K_d$  are Gaussian, i.e.,
 \begin{equation}\label{eq:gaussiankerneldef1}
 K_i(x,y) = \exp\left( - {\| x - y\|^2 }/{\tau^2}\right), \quad x,y \in \clx^p.
 \end{equation}

Note that we use a common smoothing parameter $\tau$ for each kernel. Our results also hold if each kernel $K_i$ has a dimension-dependent smoothing parameter $\tau_i$. This change does not affect  the proofs, so we, in order to reduce the notational burden, assume that the kernels have a common smoothing parameter. 

Theorem \ref{thm:concentrationholdsforgaussian} says that the concentration inequality in Theorem \ref{th:concentration_quafratic_form} holds for the Gaussian kernel when the noise sequence is Gaussian. 

\begin{Thm}\label{thm:concentrationholdsforgaussian} 
   Suppose Assumption \ref{ass:gaussiannoise}. Then, under Assumption \ref{ass:gbounded}, the results from Theorems \ref{th:main_consistency} and   \ref{th:concentration_quafratic_form} hold for the Gaussian kernel.
\end{Thm}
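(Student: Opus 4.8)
The plan is to verify that the Gaussian kernel of the form \eqref{eq:gaussiankerneldef1} together with Gaussian noise satisfies all hypotheses of Theorems \ref{th:main_consistency} and \ref{th:concentration_quafratic_form}, so that the conclusions of those theorems apply verbatim. Since Assumption \ref{ass:gbounded} is assumed, and Assumption \ref{ass:gaussiannoise} is a special case of Assumption \ref{ass:subgaussiannoise} (Gaussian random variables are sub-Gaussian, with $\sigma_i^2$ serving as variance proxy, and the multivariate Gaussian density is strictly positive on all of $\RR$, hence on any compact subset of the support), the noise hypotheses hold immediately. What remains is to check the kernel hypotheses: Assumptions \ref{ass:kernelbound}, \ref{ass:separable_kernel_multi} (with $M = \infty$), \ref{ass:eigenfunctiongrowth}, and \ref{ass:momentboundtail}.

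First I would dispatch Assumption \ref{ass:kernelbound}: since $K_i(x,x) = \exp(0) = 1$ for all $x$, we may take $\kappa_j = 1$ and $\kappa = \sqrt{d}$. Next, for Assumption \ref{ass:separable_kernel_multi}, I would invoke the classical Mercer expansion of the Gaussian kernel in terms of Hermite functions. On $\RR^{dp}$, the Gaussian kernel factorizes as a product of one-dimensional Gaussian kernels, each of which has the known expansion $\exp(-(u-v)^2/\tau^2) = \sum_{n \ge 0} \lambda_n H_n(u) H_n(v)$ with $\lambda_n$ decaying geometrically (like $\rho^n$ for some $\rho \in (0,1)$ depending on $\tau$) and $H_n$ the normalized Hermite functions, which are uniformly bounded. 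Taking tensor products and grouping terms by total degree $k = \sum_{i,r} n_{i,r}$ yields exactly the multivariate Mercer form \eqref{eq:mercerexpansion} with the index set $\mathcal{N}(k)$ and multiplicities $N(k) = \binom{k + dp - 1}{dp-1}$ as in \eqref{eq:def-N(k)}--\eqref{eq:def-mathcalN(k)}. This identifies $\lambda_{i,k} \asymp \rho^{k}$ and $\phi_{i,j,k}$ as the tensor Hermite functions, all bounded in sup norm.

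The substantive work is in Assumptions \ref{ass:eigenfunctiongrowth} and \ref{ass:momentboundtail}. For \ref{ass:eigenfunctiongrowth}, with $\beta_{i,j,k} = \lambda_{i,k}^{1/2}\|\phi_{i,j,k}\|_\infty$, I would bound $\sum_{k=1}^M \sum_{j=1}^{N(k)} \beta_{i,j,k}^2 \lesssim \sum_{k=1}^M N(k)\, \rho^k \cdot C$, where $C$ bounds the (sup norms)$^2$ of the Hermite tensor products. Since $N(k) = \binom{k+dp-1}{dp-1}$ is polynomial in $k$ of degree $dp-1$ and $\rho < 1$, the geometric decay dominates, so the sum converges to a constant; however, to get the clean power-of-$M$ bound $b_1 M^{b_2}$ one must be careful that the Hermite sup norms themselves grow polynomially in $k$ — the known bound is $\|H_k\|_\infty \lesssim k^{-1/12}$ for $L^2$-normalized physicists' Hermite functions, so in the worst case the sup norm of a degree-$k$ tensor is at most a polynomial in $k$. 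Combined with $N(k)$ polynomial and $\lambda_{i,k}$ geometric, $\sum_{k=1}^M \sum_j \beta_{i,j,k}^2$ is bounded by a constant, hence by $b_1 M^{b_2}$ for a suitable $b_2$ (indeed one can take $b_2 = dp/2 - 1$ as stated in the rate discussion, by a more refined partial-sum estimate, but crude boundedness already suffices for the qualitative claim). For Assumption \ref{ass:momentboundtail}, I would note that $\lambda_{i,k}\sum_{j=1}^{N(k)}\EE[(\phi_{i,j,k}(Y_{t-1}))^2] \le \lambda_{i,k} N(k) \|\phi_{i,j,k}\|_\infty^2 \lesssim \rho^k \cdot k^{dp-1} \cdot \mathrm{poly}(k) =: \alpha_k$, which is absolutely summable (geometric beats polynomial), and the tail $\sum_{k \ge M(T)} \alpha_k \lesssim \rho^{M(T)} \mathrm{poly}(M(T))$; choosing $M(T) = \log(T)$ with a large enough constant makes this $\le \beta_1 T^{-\beta_2}$ for any prescribed $\beta_2$, which is why $M(T) = \log(T)$ is the right truncation level for the Gaussian kernel.

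The main obstacle is the eigenfunction growth bound: one must control the sup norms of high-degree Hermite (tensor) functions and trade their at-most-polynomial growth against the geometric decay of the Gaussian eigenvalues $\lambda_{i,k} \asymp \rho^k$ and the polynomial multiplicity $N(k)$. The clean bookkeeping — matching the exponent $b_2$ to $dp/2-1$ and verifying the tail estimate feeding into $M(T) = \log T$ — is deferred to the detailed computation (Lemma \ref{pro:sumbetasgaussian} and Lemma \ref{pro:sumbetasgaussian}'s analogue for the tail), but the key point is that no hypothesis fails: geometric decay of the eigenvalues absorbs all the polynomial factors. Once all of Assumptions \ref{ass:gbounded}, \ref{ass:kernelbound}--\ref{ass:momentboundtail}, and \ref{ass:subgaussiannoise} are confirmed for the Gaussian kernel under Gaussian noise, Theorems \ref{th:main_consistency} and \ref{th:concentration_quafratic_form} apply directly and the claim follows.
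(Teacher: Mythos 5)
Your high-level plan—verify Assumptions K.1 through K.4 and N.1 for the Gaussian kernel under Gaussian noise, then apply Theorems \ref{th:main_consistency} and \ref{th:concentration_quafratic_form}—is the right strategy, and you correctly dispatch K.1 (constant diagonal) and the sub-Gaussianity of Gaussian noise. But the verification of K.2--K.4 goes astray in two places, both of which matter.

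First, the paper does not use a Hermite-function Mercer expansion. Lemma \ref{le:Gaussian-Mercer-rep} uses the Taylor (feature-map) expansion coming from $\exp(-\|x-y\|^2/\tau^2) = e^{-\|x\|^2/\tau^2}e^{-\|y\|^2/\tau^2}\exp(2x\cdot y/\tau^2)$: the ``eigenvalues'' are $\lambda_{i,k} = (2/\tau^2)^k/k!$ (super-geometric, not geometric) and the functions are $\phi_{i,j,k}(x)=e^{-\|x\|^2/\tau^2}\binom{k}{\bm n_{k,j}}^{1/2}x^{\bm n_{k,j}}$, not Hermite functions. The bound $\|H_k\|_\infty\lesssim k^{-1/12}$ you cite is for Hermite functions (weight $e^{-x^2/2}$). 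The Mercer eigenfunctions of the Gaussian kernel (e.g., the Fasshauer family) are Hermite \emph{polynomials} with a strictly weaker Gaussian weight $e^{-wx^2}$, $w<1/2$, and those are not uniformly bounded—their sup norms grow with degree. More fundamentally, the pure Gaussian kernel does not admit a separable expansion in Hermite functions with geometric eigenvalues; the Mehler formula expands a \emph{different} kernel, namely a Gaussian kernel multiplied by Gaussian damping in each variable. So the premises underlying your K.3 estimate (bounded eigenfunctions, geometric eigenvalues) are not available, and the claim that ``the sum converges to a constant'' does not follow. The paper instead needs the explicit trade-off between $(2/\tau^2)^k/k!$ decay and the polynomial growth of $\binom{k}{\bm n}^{1/2}x^{\bm n}$ weighted by the Gaussian factor; this is the content of Lemma \ref{pro:sumbetasgaussian} and its combinatorial supporting Lemma \ref{pro:bound_multinomial}, which yield $b_2 = dp/2-1$.

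Second, and more seriously, your proposed verification of Assumption K.4 via $\EE[\phi_{i,j,k}(Y_{t-1})^2]\le\|\phi_{i,j,k}\|_\infty^2$ cannot work: by the general identity $\sum_k\lambda_{i,k}\sum_j\phi_{i,j,k}(x)^2 = K_i(x,x)\le\kappa_i^2$, the best uniform bound available is $\lambda_{i,k}\|\phi_{i,j,k}\|_\infty^2\le\kappa_i^2$, so the crude estimate gives $\alpha_k\lesssim N(k)\sim k^{dp-1}$, which is \emph{not} absolutely summable—Assumption K.4 would simply fail. To get an absolutely summable $\{\alpha_k\}$ the paper must compute $\EE[\phi_{i,j,k}(Y_{t-1})^2]$ exactly: the Gaussian damping $e^{-2\|Y_{t-1}\|^2/\tau^2}$ inside $\phi^2$, integrated against the conditional Gaussian law of $Y_{t-1}$, produces a geometric factor $(2\sigma^2\tau^2/(4\sigma^2+\tau^2))^k$ (Lemmas \ref{prop:normal_moment_zalpha} and \ref{pro:onedimensionalgaussiankernelmoments}) which, multiplied by $\lambda_{i,k}$, gives the needed geometric decay in Lemma \ref{lem:gaussiankernelassumptiontailmoment}. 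This computation uses the Gaussian distribution of the chain conditionally, which is precisely why the theorem requires Assumption \ref{ass:gaussiannoise} rather than merely Assumption \ref{ass:subgaussiannoise}. Your proof treats the Gaussian-noise hypothesis as a mere convenience (``Gaussian implies sub-Gaussian'') when in fact it is the essential ingredient that makes K.4 verifiable at all.
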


\begin{proof}
     The result follows from Lemma \ref{le:Gaussian-Mercer-rep}, Lemma \ref{lem:gaussiankernelassumptiontailmoment}, and Lemma \ref{pro:sumbetasgaussian}.
\end{proof}

The proof of the results used in the proof of Theorem \ref{thm:concentrationholdsforgaussian} are quite involved and can be found in Section \ref{se:ProofsGaussianKernel}. We present two more examples but, for simplicity, consider the case $d = p=1$.

\vspace{0.2cm}
\noindent
\textit{Periodic kernel:}
Suppose the kernel $K_1$ is the periodic Sobolev kernel, i.e., 
\begin{equation} \label{eq:periodic-kernel}
\begin{split}
  K(x, y) &= \sigma^2 \bigg( 1 + 2 \sum_{k=1}^\infty \frac{1}{(2\pi k)^{2s}} \cos( 2\pi k(x - y)) \bigg)\\
  &= \sigma^2 \bigg( 1 + \frac{(-1)^{s+1}}{(2s)!} \mathrm{B}_{2s}( |x - y| ) \bigg), \quad x,y \in \RR,
  \end{split}
\end{equation}
where $\mathrm{B}_{2s}$ is the Bernoulli polynomial of degree $2s$; see Section 2.1 in \cite{wahba1990spline}. 

\begin{Thm}\label{thm:concentrationholdsforperiodic}
   Under Assumptions \ref{ass:gbounded} and \ref{ass:subgaussiannoise}, the result from Theorem  \ref{th:concentration_quafratic_form} holds for the periodic Sobolev kernel with $s>1/2$.
\end{Thm}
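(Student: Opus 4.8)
## Proof Plan for Theorem \ref{thm:concentrationholdsforperiodic}

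The plan is to verify that the periodic Sobolev kernel in \eqref{eq:periodic-kernel} satisfies the hypotheses of Theorem \ref{th:concentration_quafratic_form}, namely Assumptions \ref{ass:kernelbound}, \ref{ass:separable_kernel_multi} with $M = \infty$, \ref{ass:eigenfunctiongrowth}, and \ref{ass:momentboundtail}; Assumptions \ref{ass:gbounded} and \ref{ass:subgaussiannoise} are assumed directly. Since we work in the case $d = p = 1$, the indexing simplifies considerably: the spatial eigenfunction multiplicity is $N(k) = \binom{k + dp - 1}{dp - 1} = 1$, so each eigenvalue index carries a single eigenfunction.

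\textbf{Step 1: Mercer representation.} The Fourier series form in the first line of \eqref{eq:periodic-kernel} already exhibits the kernel as a sum over $k$ with eigenvalues proportional to $(2\pi k)^{-2s}$ and eigenfunctions given by the trigonometric basis $\{1, \sqrt{2}\cos(2\pi k x), \sqrt{2}\sin(2\pi k x)\}$ on $[0,1]$. The underlying stationary measure $\pi$ of the Markov chain need not be the uniform (Lebesgue) measure on the period, so I would argue that this Fourier expansion still furnishes a valid Mercer-type expansion of the form \eqref{eq:mercerexpansion} — this only requires that the eigenfunctions be bounded (which the trigonometric functions are, with $\|\phi_k\|_\infty$ a fixed constant) and that the series converge; note Assumption \ref{ass:separable_kernel_multi} as stated only asks for \emph{a} collection of bounded functions giving the expansion, not that they be $\pi$-orthonormal. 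The $L^\infty$ bound on the diagonal, $\sup_x K(x,x) = \sigma^2(1 + 2\sum_k (2\pi k)^{-2s}) < \infty$ for $s > 1/2$, gives Assumption \ref{ass:kernelbound} with $\kappa^2$ equal to this finite sum.

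\textbf{Step 2: Eigenfunction growth (Assumption \ref{ass:eigenfunctiongrowth}).} Here $\beta_{k} = \lambda_k^{1/2}\|\phi_k\|_\infty \asymp k^{-s}$ up to constants, so $\sum_{k=1}^M \beta_k^2 \asymp \sum_{k=1}^M k^{-2s}$, which for $s > 1/2$ is bounded by a constant (i.e. one may take $b_2 = 0$ and $b_1$ the value of the convergent series). This is the point emphasized in the paper's remarks that the periodic kernel gives $b_2 = 0$, so no additional logarithmic factor appears in the rate.

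\textbf{Step 3: Kernel moment bound (Assumption \ref{ass:momentboundtail}).} Since $|\phi_k(\cdot)| \le \sqrt{2}$ uniformly, $\lambda_k \,\EE((\phi_k(Y_{t-1}))^2) \le 2\sigma^2 (2\pi k)^{-2s} \defe \alpha_k$, which is absolutely summable for $s > 1/2$; this gives \eqref{eq:momentboundtaileqn1}. For \eqref{eq:momentboundtaileqn2} I would choose $M(T) = T$ (as flagged in the paragraph on constants), so that $\sum_{k \ge M(T)} \alpha_k \lesssim \int_T^\infty x^{-2s}\,dx \asymp T^{1-2s}$, giving $\beta_2 = 2s - 1 > 0$. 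With all four kernel assumptions verified, Theorem \ref{th:concentration_quafratic_form} applies verbatim and delivers \eqref{eq:eta-quadratic-form}.

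\textbf{Main obstacle.} The one genuinely delicate point is Step 1: whether the standard Fourier/Mercer expansion of the periodic Sobolev kernel remains a valid expansion in the sense of Assumption \ref{ass:separable_kernel_multi} when the relevant measure $\pi$ is the (non-uniform) stationary law of the chain rather than Lebesgue measure on the period. If the assumption is read literally as requiring only a pointwise-convergent bilinear expansion by uniformly bounded functions, this is immediate from the classical closed form in \eqref{eq:periodic-kernel} (the Bernoulli-polynomial identity shows the series converges for $s > 1/2$, uniformly in $x - y$). I would therefore make explicit that we invoke the closed-form identity to justify absolute and uniform convergence of the defining series, and that boundedness of the trigonometric basis is all Assumption \ref{ass:separable_kernel_multi} demands; the rest is the bookkeeping of Steps 2–4.
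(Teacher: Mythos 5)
Your proposal is correct and follows essentially the same route as the paper: verify the Mercer expansion from the Fourier form of \eqref{eq:periodic-kernel} with bounded trigonometric eigenfunctions, obtain Assumption \ref{ass:eigenfunctiongrowth} with $b_2=0$ from $\sum_k k^{-2s}<\infty$, and obtain Assumption \ref{ass:momentboundtail} with $M(T)=T$ and $\beta_2=2s-1>0$. Your extra remark on why $\pi$-orthonormality is not needed (only a bounded, pointwise-convergent bilinear expansion) is a sensible clarification of a point the paper handles implicitly by citing the known expansion.
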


\begin{proof}
It suffices to verify Assumptions \ref{ass:separable_kernel_multi}--\ref{ass:momentboundtail}.
Assumption \ref{ass:separable_kernel_multi} holds by Section 5.3 in \cite{karvonen2023probabilistic}. Therein it is shown that, with 
\begin{equation}
    \phi_k(x) = \sigma \sqrt{2} (2\pi k)^{-s} \cos(2\pi k x), \quad 
    \phi_{-k}(x) = \sigma \sqrt{2} (2 \pi k)^{-s} \sin(2\pi k x), \quad k \in \NN,
\end{equation}
and  $\phi_0 \doteq  \sigma$, we have 
\begin{equation} \label{eq:periodic-kernel-Mercer}
  K(x, y) = 
  \sigma^2 \bigg( 1 + \frac{(-1)^{s+1}}{(2s)!} \mathrm{B}_{2s}( |x - y| ) \bigg) =
  \sum_{k \in \ZZ} \phi_k(x) \phi_k(y).
\end{equation}
Assumption \ref{ass:eigenfunctiongrowth} is satisfied since
\begin{equation}
\sum\nolimits_{k=1}^M \sum\nolimits_{j=1}^{N(k)} \beta^2_{i,j,k} 
=
\sum\nolimits_{k=1}^M \|\phi_{k}\|^2_{\infty}
\leq
\sum\nolimits_{k=1}^M \sigma^2 4 (2\pi k)^{-2s}
\le b_1.   
\end{equation}    
Finally, Assumption \ref{ass:momentboundtail} can be verified through
\begin{equation}
    \sum\nolimits_{j=1}^{N(k)}  \EE((\phi_{i,j,k}(X_{t-1}))^2) 
    =
    \EE(( \sigma \sqrt{2} (2\pi k)^{-s} \cos(2\pi k X_{t-1}))^2)
    \le
    c k^{-2s}
    = \alpha_k.
\end{equation}
Letting  $M(T)=T$,  we  have 
\begin{align}
    \sum\nolimits_{k=M(T)}^{\infty} \alpha_k 
    =
    \sum\nolimits_{k = T}^{\infty} c k^{-2s}
    \leq 
    \beta_1 T^{-\beta_2},
\end{align}
where, due to our choice of $s$,  $\beta_2 = 2s-1>0$. Analogous estimates hold for the terms involving $\phi_{-k}$, so the result follows.
\end{proof}

\subsubsection{Finite Mercer representation} \label{se:example-finite}

{In this section we provide two examples of kernels with finite Mercer representations to which our results can be applied.}

\vspace{0.2cm}
\noindent
\textit{Polynomial kernel:}
We assume that Assumption \ref{ass:boundednoise} is satisfied, and therefore that $\clx$ is compact. For $x,y \in \clx^p$ and $m \in \NN$, $c >0$, define the polynomial kernel 
\begin{equation}
    K(x,y) = (x'y + c)^m.
\end{equation}
 This kernel  satisfies  Assumption \ref{ass:separable_kernel_multi} with the  finite Mercer representation $K(x,y) = \phi(x)' \phi(y)$,
where $\phi(x) \in \RR^{\binom{dp+m}{m}}$ is given by 
\begin{equation}
\phi(x) =  \left( \left(\binom{m}{n_1,\dots,n_{dp+1}}\right)^{1/2} c^{n_{dp+1}/2} \prod\limits_{i=1}^{dp} x_i^{n_i} \right)_{n_i \in \NN_0, \; \sum\nolimits_{i=1}^{dp+1}n_i = m}.
\end{equation}
Since $\clx$ is compact, Assumption \ref{ass:eigenfunctiongrowth} is also satisfied.

\vspace{0.2cm}
\noindent
\textit{Mercer sigmoid kernel:}
The sigmoid kernel is defined using the sigmoid function  
\[
K_{\text{sig}}(x,y) = \tanh(a x'y + c), \quad x,y \in \RR^{dp}.
\]
The sigmoid function is widely used as an activation function in neural networks \citep{sharma2017activation}. However, $K_{\text{sig}}$ is not separable in the sense of Assumption \ref{ass:separable_kernel_multi}. Instead, \cite{carrington2014new} introduced the Mercer sigmoid kernel given by
    \begin{equation} \label{eq:sig-kernel}
        K(x,y) = \frac{1}{dp} \sum_{i = 1 }^{dp} 
        \tanh\left( \frac{x_i - c }{b} \right) \tanh\left( \frac{y_i - c }{b} \right), \quad x,y \in \clx^p.
    \end{equation}
and demonstrated that it exhibits similar behavior to $K_{\text{sig}}$. In addition, the Mercer sigmoid kernel defined in  \eqref{eq:sig-kernel} satisfies Assumptions \ref{ass:separable_kernel_multi} and \ref{ass:eigenfunctiongrowth}.

\subsubsection{Stochastic regression model} \label{se:stochastic regression}
As a special case of a nonlinear vector autoregression model, our results cover stochastic nonlinear regression models. Such regression models can be written as $y_t = g(X_t) + \varepsilon_t$,
where the $d$-dimensional predictors $X_t$ and the errors $\varepsilon_t$ are drawn independently from unknown distributions.

Then, Theorem \ref{th:main_consistency} is satisfied under the stated conditions. Those assumptions are satisfied, for instance, under the scenarios lined out in Sections \ref{se:example-infinite} and \ref{se:example-finite}. However, we note that for the Gaussian kernel, we need to impose additional assumptions on the distribution of the predictors. For example, our results hold when the predictors $X_t$ are drawn from a Gaussian distribution.

We note that while our results cover the stochastic regression case, this setting allows for an alternative proof of Theorem \ref{th:concentration_quafratic_form}. To be more precise, one can replace Theorem \ref{th:concentration_quafratic_form} by a conditional Hanson-Wright inequality.

\section{Auxiliary results} \label{se:auxiliary results}
For the reader's convenience, we state some of the results used throughout the paper here.

\subsection{Reproducing kernel Hilbert spaces} \label{se:repHil}

Suppose that $K_1,\dots,K_d : \clx^p \times \clx^p \to \RR$ are Mercer kernels with associated RKHS $\mathbb{H}_1,\dots,\mathbb{H}_d$, respectively. Let $\mathbb{H}$  be the Hilbert space defined as 
\begin{equation} \label{def:RKHS-multi}
\bbH \doteq \bigtimes\nolimits_{i=1}^{d} \mathbb{H}_i = \{ h \in \clb(\clx^p : \RR^d) :  h =  (h_1,\dots,h_d)', \;  h_i \in \mathbb{H}_i, \; i = 1,\dots,d\},
\end{equation}
with inner product 
\begin{equation}\label{eq:rkhsmultidefinnerproduct}
\langle h, \widetilde{h} \rangle_{\bmH} = \sum\nolimits_{i=1}^{d} \langle h_i, \widetilde{h}_i \rangle_{\bmH_i}.
\end{equation}
For a kernel $K_i$ satisfying Assumption \ref{ass:separable_kernel_multi}, we have, for $f,g \in \bbH_i$, 
\begin{equation*}
    \begin{split}
\langle f, g \rangle_{\bbH_i} = \sum\nolimits_{k=1}^{M} \lambda_{i,k}^{-1} \sum\nolimits_{j=1}^{N(k)} \langle f,  \phi_{i,j,k}\rangle_{L^{2}(\clx^p,\pi)}\langle g,  \phi_{i,j,k}\rangle_{L^{2}(\clx^p,\pi)},
\end{split}
\end{equation*}
where $\langle \cdot , \cdot \rangle_{L^{2}(\clx^p,\pi)}$ is the standard inner product on $L^{2}(\clx^p,\pi)$.

\begin{Def}
    A vector-valued RKHS is a Hilbert space $\bbH$ of functions $f: \mathcal{X}^p \to \RR^d$, such that for every $z \in \RR^d$, and 
 $x \in \clx^p$, the map $y \mapsto \clk(x,y)z$  belongs to $\bbH$ and, moreover, $\clk$ has the reproducing property
\begin{equation*}
    \langle f , \clk(x,\cdot) z \rangle_{\bbH} = f(x)'z.
\end{equation*}
\end{Def}

A discussion of vector-valued RKHSs can be found in \cite{alvarez2012kernels}. The following lemma is an immediate consequence of the fact that each $\bbH_i$ is an RKHS, \eqref{def:RKHS-multi}, and \eqref{eq:rkhsmultidefinnerproduct}.

\begin{Lem} \label{prop:RKHS-multi}
The space $\mathbb{H}$ is an RKHS with associated reproducing kernel \eqref{eq:diagkernel1}.
\end{Lem}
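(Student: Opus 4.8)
The plan is to verify directly that the product Hilbert space $\bbH = \bigtimes_{i=1}^d \bbH_i$ with the inner product \eqref{eq:rkhsmultidefinnerproduct} satisfies the reproducing property for the diagonal kernel $\clk$ in \eqref{eq:diagkernel1}, and that $\clk(x,\cdot)z \in \bbH$ for every $x \in \clx^p$ and $z \in \RR^d$. Since each $\bbH_i$ is by hypothesis the RKHS of $K_i$, the building blocks are already in place; the work is purely bookkeeping about how the direct-sum structure interacts with the block-diagonal kernel.

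First I would observe that for fixed $x \in \clx^p$ and $z = (z_1,\dots,z_d)' \in \RR^d$, the function $y \mapsto \clk(x,y)z$ has $i$-th component $y \mapsto z_i K_i(x,y)$. Since $K_i(x,\cdot) \in \bbH_i$ (because $\bbH_i$ is the RKHS of $K_i$) and $\bbH_i$ is a linear space, $z_i K_i(x,\cdot) \in \bbH_i$, so by the definition \eqref{def:RKHS-multi} we have $\clk(x,\cdot)z \in \bbH$. Next, for any $f = (f_1,\dots,f_d)' \in \bbH$, I would compute using \eqref{eq:rkhsmultidefinnerproduct}:
\[
\langle f, \clk(x,\cdot)z\rangle_{\bbH} = \sum_{i=1}^d \langle f_i, z_i K_i(x,\cdot)\rangle_{\bbH_i} = \sum_{i=1}^d z_i \langle f_i, K_i(x,\cdot)\rangle_{\bbH_i} = \sum_{i=1}^d z_i f_i(x) = f(x)'z,
\]
where the third equality is the reproducing property in each $\bbH_i$. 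This is exactly the reproducing property required in the definition of a vector-valued RKHS. It remains only to note that $\bbH$ is indeed a Hilbert space of functions $\clx^p \to \RR^d$: completeness follows from completeness of each factor $\bbH_i$ together with the fact that \eqref{eq:rkhsmultidefinnerproduct} is the standard inner product on a finite direct sum, and that the evaluation functionals are bounded (which is immediate from the reproducing property just established, or alternatively inherited componentwise). One should also record that $\clk$ is symmetric and positive semidefinite as a matrix-valued kernel, which is clear since it is diagonal with each diagonal entry $K_i$ a scalar Mercer kernel.

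I do not anticipate a genuine obstacle here — the statement is essentially a compatibility check, and the lemma is flagged in the text as "an immediate consequence." The only point requiring mild care is making sure the notion of RKHS being used for the $\bbH_i$ (via the Mercer/feature-space characterization in Section \ref{se:repHil}) is reconciled with the reproducing-property definition, i.e. that $K_i(x,\cdot)$ genuinely lies in $\bbH_i$ and reproduces point evaluation there; this is standard Mercer theory and can be cited. If one wanted to be fully self-contained, one could instead exhibit the feature map $\Phi(x)z = (z_1\phi_{1,\cdot,\cdot}(x),\dots)$ built from the Mercer expansions of the $K_i$ and check $\langle \Phi(x)z, \Phi(y)w\rangle = z'\clk(x,y)w$, but the componentwise argument above is shorter and suffices.
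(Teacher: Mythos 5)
Your verification is correct and is exactly the routine componentwise check the paper has in mind when it calls the lemma ``an immediate consequence'' of each $\bbH_i$ being an RKHS together with \eqref{def:RKHS-multi} and \eqref{eq:rkhsmultidefinnerproduct}; the paper itself supplies no further argument. Your computation $\langle f, \clk(x,\cdot)z\rangle_{\bbH} = \sum_{i=1}^d z_i \langle f_i, K_i(x,\cdot)\rangle_{\bbH_i} = f(x)'z$ is precisely the reproducing property in the paper's definition of a vector-valued RKHS, so there is nothing to add.
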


The following lemma shows that $\clk$ is uniformly bounded. 

\begin{Lem}\label{prop:clkkernelbasicprops}
Consider the kernel $\clk : \clx^p \times \clx^p \to \RR^{d\times d}$ defined in \eqref{eq:diagkernel1}. 
Under Assumption \ref{ass:kernelbound}, we have $\sup\nolimits_{x \in \clx^p} \| \clk(x, x) \| \le \kappa$.
\end{Lem}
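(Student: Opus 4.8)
The plan is to compute $\clk(x,x)$ explicitly using the diagonal structure \eqref{eq:diagkernel1} and then bound its Frobenius norm using Assumption \ref{ass:kernelbound}. By definition, $\clk(x,x) = \diag(K_1(x,x),\dots,K_d(x,x))$, so its Frobenius norm is simply $\|\clk(x,x)\| = \bigl(\sum_{j=1}^d (K_j(x,x))^2\bigr)^{1/2}$, since the off-diagonal entries vanish.

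Next I would invoke Assumption \ref{ass:kernelbound}, which gives $\sup_{x \in \clx^p} |K_j(x,x)| \le \kappa_j^2$ for each $j = 1,\dots,d$. Substituting into the expression above yields
\begin{equation*}
\sup_{x \in \clx^p} \|\clk(x,x)\| = \sup_{x \in \clx^p} \Bigl( \sum\nolimits_{j=1}^d (K_j(x,x))^2 \Bigr)^{1/2} \le \Bigl( \sum\nolimits_{j=1}^d \kappa_j^4 \Bigr)^{1/2} \le \sum\nolimits_{j=1}^d \kappa_j^2 \le \Bigl( \sum\nolimits_{j=1}^d \kappa_j^2 \Bigr)^{1/2} \cdot \Bigl( \sum\nolimits_{j=1}^d \kappa_j^2 \Bigr)^{1/2},
\end{equation*}
and one recognizes $\kappa = (\sum_{j=1}^d \kappa_j^2)^{1/2}$ from the statement of Assumption \ref{ass:kernelbound}. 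Strictly, $(\sum_j \kappa_j^4)^{1/2} \le \sum_j \kappa_j^2 = \kappa^2$, which would give the bound $\kappa^2$ rather than $\kappa$; to land exactly on $\kappa$ one uses instead that each $K_j(x,x) \ge 0$ (as $K_j$ is positive definite) together with $\sum_j (K_j(x,x))^2 \le (\sum_j K_j(x,x))^2$ is the wrong direction, so the cleanest route is $(\sum_j (K_j(x,x))^2)^{1/2} \le (\sum_j \kappa_j^2 \cdot \kappa_j^2)^{1/2}$ and, since $\kappa_j^2 \le \kappa^2$ for every $j$, this is $\le \kappa \,(\sum_j \kappa_j^2)^{1/2} \cdot$ — hmm, that overshoots too. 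The honest statement is that $\sup_x \|\clk(x,x)\| \le \kappa^2$, and if the intended bound is $\kappa$ it presumably reflects a convention in which $\kappa_j \le 1$ or the operator norm is used in place of the Frobenius norm; in the operator-norm reading, $\|\clk(x,x)\|_{\mathrm{op}} = \max_j |K_j(x,x)| \le \max_j \kappa_j^2 \le \kappa^2$ as well, so I would simply carry through the diagonal computation and the elementary inequality $\max_j \kappa_j^2 \le \sum_j \kappa_j^2$, arriving at the stated bound.

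There is no real obstacle here: the entire content is the observation that a block-diagonal matrix has Frobenius (and operator) norm controlled by its diagonal blocks, combined with the uniform kernel bound. The only point requiring a moment's care is reconciling the precise constant ($\kappa$ versus $\kappa^2$) with whichever matrix norm convention is in force in \eqref{eq:diagkernel1}; once that is pinned down, the inequality is immediate and uniform in $x \in \clx^p$.
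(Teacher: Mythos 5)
Your computation is exactly the paper's: the paper's entire proof is the one line $\| \clk(x, x) \|^2 = \sum_{i=1}^{d} | K_i(x,x)|^2 \le \sum_{i=1}^{d} \kappa_i^2 = \kappa^2$, i.e.\ the diagonal Frobenius-norm calculation followed by Assumption \ref{ass:kernelbound}. The discrepancy you agonize over is real and is present in the paper itself: the middle inequality silently uses $|K_i(x,x)|^2 \le \kappa_i^2$, whereas Assumption \ref{ass:kernelbound} only gives $|K_i(x,x)| \le \kappa_i^2$ and hence $|K_i(x,x)|^2 \le \kappa_i^4$, so the honest conclusion from the assumption as literally stated is $\sup_x \|\clk(x,x)\| \le (\sum_j \kappa_j^4)^{1/2} \le \kappa^2$, exactly as you say. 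The stated bound $\le \kappa$ is correct only under a rescaled convention (e.g.\ $|K_j(x,x)| \le \kappa_j$, or $\kappa_j \le 1$ as for the Gaussian kernel where each $\kappa_i = 1$). So your proof attempt is a faithful reconstruction of the paper's argument, and your flag about $\kappa$ versus $\kappa^2$ identifies a genuine slip in the paper's one-line proof rather than a gap in your own reasoning; the only criticism is presentational, namely that you should state the corrected bound cleanly rather than leaving the false starts in the final write-up.
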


\begin{proof}
    For $x \in \clx^p$, $\| \clk(x, x) \|^2 = \sum\nolimits_{i=1}^{d} | K_i(x,x)|^2 \le \sum\nolimits_{i=1}^{d} \kappa_i^2 = \kappa^2$. 
\end{proof}

The following result shows how the RKHS norm  can be used to bound the supremum norm of elements of $\bbH$. 

\begin{Lem}\label{lem:boundinfinitybyHnorm}
    If $f = (f_1,\dots,f_d)' \in \bbH$, then $\| f \|_{\infty} \le \kappa  \| f\|_{\bbH}$.
\end{Lem}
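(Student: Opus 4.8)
The plan is to reduce the statement to the classical reproducing-kernel bound in each of the scalar RKHSs $\bbH_i$ and then to combine the coordinatewise estimates using the product structure \eqref{def:RKHS-multi}--\eqref{eq:rkhsmultidefinnerproduct}. Fix $f = (f_1,\dots,f_d)' \in \bbH$, so that $f_i \in \bbH_i$ for each $i$. For a fixed $x \in \clx^p$, the reproducing property of $K_i$ gives $f_i(x) = \langle f_i, K_i(x,\cdot)\rangle_{\bbH_i}$, and the Cauchy--Schwarz inequality in $\bbH_i$ yields $|f_i(x)| \le \|f_i\|_{\bbH_i}\,\|K_i(x,\cdot)\|_{\bbH_i}$. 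Since $\|K_i(x,\cdot)\|_{\bbH_i}^2 = \langle K_i(x,\cdot), K_i(x,\cdot)\rangle_{\bbH_i} = K_i(x,x) \le \kappa_i^2$ by Assumption \ref{ass:kernelbound}, we obtain $|f_i(x)| \le \kappa_i \|f_i\|_{\bbH_i}$ for every $x \in \clx^p$, hence $\|f_i\|_\infty \le \kappa_i \|f_i\|_{\bbH_i}$.

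It then remains to pass from these coordinatewise bounds to the asserted bound on $\|f\|_\infty = \sup_{x \in \clx^p} \max_{i=1,\dots,d} |f_i(x)|$ (recall the convention for $\|\cdot\|_\infty$ from the notation section). From the previous step, $\|f\|_\infty \le \max_{i} \kappa_i \|f_i\|_{\bbH_i}$. Using $\|f_i\|_{\bbH_i}^2 \le \sum_{j=1}^d \|f_j\|_{\bbH_j}^2 = \|f\|_{\bbH}^2$ (from \eqref{eq:rkhsmultidefinnerproduct}) together with $\kappa_i^2 \le \sum_{j=1}^d \kappa_j^2 = \kappa^2$ (from the definition of $\kappa$ in Assumption \ref{ass:kernelbound}), each term satisfies $\kappa_i \|f_i\|_{\bbH_i} \le \kappa \|f\|_{\bbH}$, and taking the maximum over $i$ concludes. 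Equivalently, and perhaps most cleanly, one can combine the two steps at once via $\|f\|_\infty \le \max_i \kappa_i \|f_i\|_{\bbH_i} \le \big(\sum_i \kappa_i^2\big)^{1/2}\big(\sum_i \|f_i\|_{\bbH_i}^2\big)^{1/2} = \kappa \|f\|_{\bbH}$, again by Cauchy--Schwarz.

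There is no substantive obstacle here; the argument is a routine RKHS estimate, and the only points requiring care are bookkeeping items: correctly identifying which $\|\cdot\|_\infty$ is meant (the coordinatewise maximum of the scalar sup norms) and correctly invoking the definition \eqref{eq:rkhsmultidefinnerproduct} of the multivariate RKHS norm. Note also that the reproducing property of $K_i$ used in the first step is available directly from the defining description of $\bbH_i$ given in Section \ref{se:prelim}, so no additional appeal to RKHS theory is needed.
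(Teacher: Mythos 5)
Your argument is correct and follows essentially the same route as the paper: coordinatewise bound $\|f_i\|_\infty \le \kappa_i\|f_i\|_{\bbH_i}$, then Cauchy--Schwarz across the $d$ coordinates using \eqref{eq:rkhsmultidefinnerproduct} and the definition of $\kappa$. The only cosmetic difference is that the paper cites Corollary 4.36 of Steinwart and Christmann for the scalar step, whereas you rederive it directly from the reproducing property and $K_i(x,x)\le\kappa_i^2$, which is the standard proof of that corollary.
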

\begin{proof}
    From Corollary 4.36 of \cite{steinwart2008support}, we see that for each $i = 1,\dots,d$,
$\|f_i\|_{\infty} \le \kappa_i  \| f_i \|_{\bbH_i}$,
   so, using Cauchy-Schwarz inequality, 
    \begin{align*}
        \| f\|_{\infty} &\le 
        \max_{i=1,\dots,d} \|f_i\|_{\infty} 
        \le  
        \sum\limits_{i=1}^{d} \kappa_i  \| f_i \|_{\bbH_i}  \le \left( \sum\limits_{i=1}^{d} \kappa_i^2\right)^{1/2} \left(\sum\limits_{i=1}^{d} \| f_i\|^2_{\bbH_i}\right)^{1/2}
        = \left( \sum\limits_{i=1}^{d} \kappa_i^2\right)^{1/2} \| f\|_{\bbH}.
    \end{align*}
\end{proof}

\begin{Lem} \label{prob:ind_orthogonal}
Let $\bbH$ be a separable Hilbert space with inner product $\langle \cdot, \cdot \rangle_{\bbH}$. Let $X$ and $Y$ be $\bbH$-valued random variables such that $\EE_{\bbH}(X), \EE_{\bbH}(Y), \Var_{\bbH}(X)$, and $\Var_{\bbH}(Y)$ are well defined. If $X$ and $Y$ are independent, then 
\[
\EE\left[ \langle X , Y \rangle_{\bbH} \right] = \langle \EE_{\bbH}(X), \EE_{\bbH}(Y)\rangle_{\bbH}.
\]
\end{Lem}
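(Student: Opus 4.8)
The plan is to expand both $X$ and $Y$ in the orthonormal basis $\{\phi_k\}_{k\in\N}$ and reduce the identity to the scalar case applied coordinatewise, then justify the interchange of expectation and infinite summation using the hypotheses that the first and second $\bbH$-moments are well defined.

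First I would write, for the orthonormal basis $\{\phi_k\}$ of $\bbH$, the expansions $X = \sum_{k} \langle X, \phi_k\rangle_{\bbH}\,\phi_k$ and $Y = \sum_{\ell} \langle Y, \phi_\ell\rangle_{\bbH}\,\phi_\ell$ (convergence a.s.\ in $\bbH$, by separability and Parseval). By orthonormality, $\langle X, Y\rangle_{\bbH} = \sum_{k} \langle X,\phi_k\rangle_{\bbH}\,\langle Y,\phi_k\rangle_{\bbH}$. Taking expectations, the goal is to show $\EE\big[\sum_k \langle X,\phi_k\rangle_{\bbH}\langle Y,\phi_k\rangle_{\bbH}\big] = \sum_k \EE[\langle X,\phi_k\rangle_{\bbH}]\,\EE[\langle Y,\phi_k\rangle_{\bbH}]$, because the right-hand side equals $\langle \EE_{\bbH}(X), \EE_{\bbH}(Y)\rangle_{\bbH}$ by the definition of $\EE_{\bbH}$ given in the Notation section together with orthonormality of $\{\phi_k\}$. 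For each fixed $k$, independence of $X$ and $Y$ gives independence of the real random variables $\langle X,\phi_k\rangle_{\bbH}$ and $\langle Y,\phi_k\rangle_{\bbH}$, and since $\Var_{\bbH}(X),\Var_{\bbH}(Y)$ being well defined forces $\EE[\langle X,\phi_k\rangle_{\bbH}^2]<\infty$ and $\EE[\langle Y,\phi_k\rangle_{\bbH}^2]<\infty$, each product is integrable with $\EE[\langle X,\phi_k\rangle_{\bbH}\langle Y,\phi_k\rangle_{\bbH}] = \EE[\langle X,\phi_k\rangle_{\bbH}]\,\EE[\langle Y,\phi_k\rangle_{\bbH}]$.

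The remaining step, and the main obstacle, is interchanging $\EE$ with the sum over $k$. I would do this by dominated convergence for series: by Cauchy--Schwarz in $\ell^2$, the partial sums satisfy $\big|\sum_{k=1}^n \langle X,\phi_k\rangle_{\bbH}\langle Y,\phi_k\rangle_{\bbH}\big| \le \|X\|_{\bbH}\,\|Y\|_{\bbH}$ pointwise, and by independence and Cauchy--Schwarz in $L^2(\PP)$, $\EE[\|X\|_{\bbH}\|Y\|_{\bbH}] = \EE[\|X\|_{\bbH}]\,\EE[\|Y\|_{\bbH}] \le (\EE\|X\|_{\bbH}^2)^{1/2}(\EE\|Y\|_{\bbH}^2)^{1/2} < \infty$, the finiteness coming from $\Var_{\bbH}(X),\Var_{\bbH}(Y)$ (equivalently $\EE\|X\|_{\bbH}^2,\EE\|Y\|_{\bbH}^2$) being well defined. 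Hence the dominating random variable $\|X\|_{\bbH}\|Y\|_{\bbH}$ is integrable, the partial sums converge a.s.\ to $\langle X,Y\rangle_{\bbH}$, and dominated convergence yields $\EE[\langle X,Y\rangle_{\bbH}] = \sum_k \EE[\langle X,\phi_k\rangle_{\bbH}\langle Y,\phi_k\rangle_{\bbH}] = \sum_k \EE[\langle X,\phi_k\rangle_{\bbH}]\EE[\langle Y,\phi_k\rangle_{\bbH}] = \langle \EE_{\bbH}(X),\EE_{\bbH}(Y)\rangle_{\bbH}$, completing the proof.
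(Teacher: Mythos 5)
Your proof is correct and complete. Worth noting: the paper does not actually supply a proof of this lemma, stating only that it "follows from the discussion in Chapter~1.6 of Bosq (2000)." What you have written is the standard direct argument that underlies that reference: expand in an orthonormal basis via Parseval, factor each coordinate by independence, and justify the interchange of expectation and series by dominated convergence with the dominating variable $\|X\|_{\bbH}\|Y\|_{\bbH}$, whose integrability follows from independence plus the second-moment hypotheses. All the pieces check out — in particular the pointwise bound $\bigl|\sum_{k=1}^{n}\langle X,\phi_k\rangle_{\bbH}\langle Y,\phi_k\rangle_{\bbH}\bigr|\le\|X\|_{\bbH}\|Y\|_{\bbH}$ (Cauchy--Schwarz plus Bessel), the factorization $\EE[\|X\|_{\bbH}\|Y\|_{\bbH}]=\EE[\|X\|_{\bbH}]\,\EE[\|Y\|_{\bbH}]$ (independence of $X$ and $Y$ passes to the measurable functionals $\|X\|_{\bbH}$ and $\|Y\|_{\bbH}$), and the identification of the limiting sum with $\langle\EE_{\bbH}(X),\EE_{\bbH}(Y)\rangle_{\bbH}$ via the definition of $\EE_{\bbH}$ in the paper's notation. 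So your proposal gives a self-contained proof where the paper offers only a citation; the substance of the argument is what one would find in the cited source.
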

Lemma \ref{prob:ind_orthogonal} follows from the discussion in Chapter 1.6 of \cite{bosq2000linear}.

\begin{Lem}\label{pro:multdimhilbertkernelquadproduct}
    Recall the definitions of $\clk_{X}$ and $\clk(X,X)$ from \eqref{eq:empkwithargument} and \eqref{eq:diagempkernelmatrix}, respectively. For $u,v \in \RR^{d(T-p)\times 1}$, we have 
    \[
    \langle \clk_X(\cdot) u, \clk_X(\cdot) v \rangle_{\bbH} = u' \clk(X,X) v.
    \]
\end{Lem}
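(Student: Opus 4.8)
The plan is to unwind the block-diagonal and vectorized notation, reduce the claim to the component RKHSs $\bbH_i$, and then invoke the reproducing property of each $K_i$. First I would write $u = \vecop(u_1,\dots,u_d)$ and $v = \vecop(v_1,\dots,v_d)$ with $u_i, v_i \in \RR^{(T-p)\times 1}$. Since $\mathcal{K}_X(z) = \diag(K_{X,1}(z)',\dots,K_{X,d}(z)')$ by \eqref{eq:empkwithargument}, the $i$-th coordinate of the $\RR^d$-valued function $z \mapsto \mathcal{K}_X(z)u$ is $z \mapsto K_{X,i}(z)'u_i = \sum_{t=p+1}^T u_{i,t}\, K_i(z, X_{(t-p):(t-1)})$, which is a finite linear combination of the sections $K_i(\cdot, X_{(t-p):(t-1)})$, each lying in $\bbH_i$ by the reproducing property. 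Hence $\mathcal{K}_X(\cdot)u \in \bigtimes_{i=1}^d \bbH_i = \bbH$, and likewise $\mathcal{K}_X(\cdot)v \in \bbH$, so the left-hand inner product is well defined.

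Next, by the definition \eqref{eq:rkhsmultidefinnerproduct} of the inner product on $\bbH$,
\[
\langle \mathcal{K}_X(\cdot)u,\, \mathcal{K}_X(\cdot)v \rangle_{\bbH} = \sum\nolimits_{i=1}^d \big\langle K_{X,i}(\cdot)'u_i,\; K_{X,i}(\cdot)'v_i \big\rangle_{\bbH_i},
\]
and I would compute each summand by expanding both arguments into their finite sums over $s,t = p+1,\dots,T$ and using bilinearity of $\langle\cdot,\cdot\rangle_{\bbH_i}$ together with the reproducing identity $\langle K_i(\cdot, x), K_i(\cdot, y)\rangle_{\bbH_i} = K_i(x,y)$. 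This yields
\[
\big\langle K_{X,i}(\cdot)'u_i,\, K_{X,i}(\cdot)'v_i \big\rangle_{\bbH_i} = \sum\nolimits_{s,t=p+1}^T u_{i,s}\, v_{i,t}\, K_i(X_{(s-p):(s-1)}, X_{(t-p):(t-1)}) = u_i'\, K_i(X,X)\, v_i,
\]
where the final identity is just the definition of the empirical kernel matrix $K_i(X,X)$.

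Finally, summing over $i$ and recognizing the block-diagonal structure $\mathcal{K}(X,X) = \diag(K_1(X,X),\dots,K_d(X,X))$ from \eqref{eq:diagempkernelmatrix}, I would conclude $\sum_{i=1}^d u_i'\, K_i(X,X)\, v_i = u'\, \mathcal{K}(X,X)\, v$, which is the claim. There is no genuine obstacle here beyond careful bookkeeping with the vectorized indexing; the only point meriting an explicit line is the verification that $\mathcal{K}_X(\cdot)u \in \bbH$, which, as noted above, is immediate from each component being a finite linear combination of reproducing-kernel sections.
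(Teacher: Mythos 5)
Your proof is correct and follows essentially the same approach as the paper's: decompose the $\bbH$-inner product into a sum of $\bbH_i$-inner products via \eqref{eq:rkhsmultidefinnerproduct}, expand bilinearly, apply the reproducing property of each $K_i$, and reassemble via the block-diagonal structure of $\clk(X,X)$. The only addition is your explicit verification that $\clk_X(\cdot)u \in \bbH$, which the paper takes for granted.
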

\begin{proof}
    We write $
    u' = (u_1',\dots,u_d')$ and  $v' = (v_1',\dots,v_d')$,
    where $u_i' = (u_{i,p+1},\dots,u_{i,T})$ and $ v_i' = (v_{i,p+1},\dots,v_{i,T})$. Observe that 
    \begin{equation}\label{eq:kxufullkxvfull}
        \begin{split}
            \langle \clk_X(\cdot) u, \clk_X(\cdot) v \rangle_{\bbH} &= \sum\nolimits_{i=1}^d \langle (\clk_X(\cdot) u)_i, (\clk_X(\cdot) v)_i \rangle_{\bbH_i}\\
        \end{split}
    \end{equation}
    and
    \begin{equation}\label{eq:innermultikxukxv}
        \begin{split}
            \langle (\clk_X(\cdot) u)_i, (\clk_X(\cdot) v)_i \rangle_{\bbH_i} &= \sum\nolimits_{j=p+1}^{T} \sum\nolimits_{l=p+1}^{T}\langle (\clk_X(\cdot) )_{i,j}u_{i,j}, (\clk_X(\cdot))_{i,l} v_{i,l} \rangle_{\bbH_i}\\
            &= \sum\nolimits_{j=p+1}^{T} \sum\nolimits_{l=p+1}^{T} u_{i,j}\langle (\clk_X(\cdot) )_{i,j}, (\clk_X(\cdot))_{i,l} \rangle_{\bbH_i}v_{i,l} \\
             &= \sum\nolimits_{j=p+1}^{T} \sum\nolimits_{l=p+1}^{T} u_{i,j} K_i( X_{(j-p):(j-1)}, X_{(l-p):(l-1)}) v_{i,l} \\
             &= u'_i K_i(X,X) v_i.
        \end{split}
    \end{equation}
     The result follows on combining \eqref{eq:diagempkernelmatrix}, \eqref{eq:kxufullkxvfull}, and \eqref{eq:innermultikxukxv}.
\end{proof}

\begin{Lem} \label{prop:operator-emp-eq}
    For any bounded $f \in L^{2,d}(\clx^p, \pi)$, we have 
    \[
    (L_{\calK, X}  + \lambda I )^{-1} L_{\calK, X} f(z) = \calK_X(z) ( \calK(X,X) + \lambda T I_{d(T-p)})^{-1} G_f(X), \quad z \in \clx^p,
    \]
    where $G_f(X)$ is defined analogously to $G_g(X)$ in \eqref{eq:vectorizedregressionproblem1}.
\end{Lem}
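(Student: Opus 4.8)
The plan is to read the claimed identity as the \emph{empirical} counterpart of the explicit formula \eqref{eq:glamdef1} for $g_{\lambda}$, obtained by exactly the representer-theorem computation that produced \eqref{eq:ghatkernelrep1}, but with the stacked observations $Y=\vecop(\mathcal{Y}_X)$ replaced by the noiseless stacked evaluations $G_f(X)$ (defined as in \eqref{eq:vectorizedregressionproblem1} with $f$ in place of $g$). Here $L_{\calK,X}$ denotes the empirical integral operator obtained from $L_{\calK}$ in \eqref{eq:lkiintegralop2} by replacing $\pi$ with the empirical measure $\tfrac1T\sum_{t=p+1}^{T}\delta_{X_{(t-p):(t-1)}}$, so that $L_{\calK,X}(h)(z)=\tfrac1T\sum_{t=p+1}^{T}\calK\big(z,X_{(t-p):(t-1)}\big)h\big(X_{(t-p):(t-1)}\big)$ for bounded $h\in L^{2,d}(\clx^p,\pi)$ and $z\in\clx^p$; in particular $L_{\calK,X}$ has finite-dimensional range, spanned by the feature functions $z\mapsto K_i\big(z,X_{(t-p):(t-1)}\big)e_i$.

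First I would record the elementary identity $L_{\calK,X}(h)(z)=\tfrac1T\,\calK_X(z)\,G_h(X)$, valid for every bounded $h$, which is immediate from the block-diagonal form \eqref{eq:diagkernel1} of $\calK$, the definition \eqref{eq:empkwithargument} of $\calK_X(z)$, and the definition of $G_h(X)$; it is the same bookkeeping that led to \eqref{eq:ghat}. Specializing to $h=\calK_X(\cdot)\beta$ with $\beta\in\RR^{d(T-p)}$ and invoking the computation preceding \eqref{eq:kerneltimesalphahat} (stacking $\calK_X(\cdot)\beta$ over the data points returns $\calK(X,X)\beta$) gives $L_{\calK,X}\big(\calK_X(\cdot)\beta\big)=\tfrac1T\,\calK_X(\cdot)\,\calK(X,X)\beta$, hence
\[
(L_{\calK,X}+\lambda I)\big(\calK_X(\cdot)\beta\big)=\tfrac1T\,\calK_X(\cdot)\big(\calK(X,X)+\lambda T I_{d(T-p)}\big)\beta .
\]
I would then take $\beta\defe\big(\calK(X,X)+\lambda T I_{d(T-p)}\big)^{-1}G_f(X)$, which is well defined because each $K_i(X,X)$ is positive semidefinite (as $K_i$ is a kernel), so $\calK(X,X)$ is positive semidefinite and $\calK(X,X)+\lambda T I_{d(T-p)}$ is invertible for $\lambda>0$. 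With this choice the display becomes $(L_{\calK,X}+\lambda I)\big(\calK_X(\cdot)\beta\big)=\tfrac1T\,\calK_X(\cdot)G_f(X)=L_{\calK,X}(f)$, and applying $(L_{\calK,X}+\lambda I)^{-1}$ to both sides yields $\calK_X(\cdot)\beta=(L_{\calK,X}+\lambda I)^{-1}L_{\calK,X}f$, which is exactly the assertion.

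The one step needing a word of care is the invertibility of $L_{\calK,X}+\lambda I$ that is implicit in the statement: since $L_{\calK,X}$ is finite-rank with range $V\defe\{\calK_X(\cdot)\beta:\beta\in\RR^{d(T-p)}\}$ and maps $V$ into itself, any $h$ with $(L_{\calK,X}+\lambda I)h=0$ already lies in $V$, and writing $h=\calK_X(\cdot)\beta$ one uses Lemma \ref{pro:multdimhilbertkernelquadproduct} to identify $\ker\calK_X(\cdot)=\ker\calK(X,X)$ and then deduces from $\tfrac1T\calK_X(\cdot)\big(\calK(X,X)+\lambda T I_{d(T-p)}\big)\beta=0$ that $\calK(X,X)\beta=0$, i.e.\ $h=0$; the Fredholm alternative for the compact operator $\lambda^{-1}L_{\calK,X}$ then upgrades injectivity to invertibility on $L^{2,d}(\clx^p,\pi)$. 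I expect the main difficulty to be purely notational rather than analytic --- fixing the $\tfrac1T$ normalization in $L_{\calK,X}$ so that the factor $\lambda T$ matches \eqref{eq:ghatkernelrep1}, and threading the $\vecop$/block-diagonal bookkeeping cleanly --- because the finite-rank structure reduces the entire argument to linear algebra that has essentially already been carried out around \eqref{eq:ghat}--\eqref{eq:kerneltimesalphahat}.
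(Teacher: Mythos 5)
Your proof is correct and follows essentially the same route as the paper's: both rest on the identities $L_{\calK,X}h=\tfrac1T\,\calK_X(\cdot)G_h(X)$ and $L_{\calK,X}\big(\calK_X(\cdot)\beta\big)=\tfrac1T\,\calK_X(\cdot)\,\calK(X,X)\beta$, together with the observation that everything in play lives in the finite-dimensional span of the kernel sections. The only difference is direction --- the paper starts from $h\doteq(L_{\calK,X}+\lambda I)^{-1}L_{\calK,X}f$, writes $h=\calK_X(\cdot)\widehat{\nu}$, and solves for $\widehat{\nu}$ by arguing that $\calK_X(z)(\cdots)=0$ for all $z$ forces the vector identity (via positive definiteness), whereas you exhibit the candidate $\beta$ and verify it, which sidesteps that injectivity step; your explicit justification of the invertibility of $L_{\calK,X}+\lambda I$ is a point the paper leaves implicit.
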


\begin{proof}
Let $
h(z) \doteq ( L_{\calK,X } + \lambda I)^{-1} L_{\calK,X} f(z)$, 
so that
\begin{equation}\label{eq:hinvers831a}
(L_{\calK,X}+ \lambda I) h(z) = L_{\calK,X}f(z).
\end{equation}
Then, $h(z) = \lambda^{-1} ( - L_{\calK,X} h(z) + L_{\calK,X} f(z))$,
so there are $\nu_{p+1},\dots, \nu_{T} \in \RR^{d}$ such that with  
\[
\widehat{\nu} \doteq \begin{pmatrix}
    \widehat{\nu}_{1}' & \cdots 
 & \widehat{\nu}_{d}'
\end{pmatrix}' \in \RR^{d (T-p)}, \quad \widehat{\nu}_{i} = \begin{pmatrix}
    \nu_{i,p+1} &
    \nu_{i,p+2}& 
    \cdots  & 
    \nu_{i,T}
\end{pmatrix}' \in \RR^{T-p}, \quad i = 1,\dots,d,
\]
we have
\begin{equation}\label{eq:hznurep1}
h(z) = \sum\nolimits_{t=p+1}^{T} \calK(Y_{t-1}, z) \nu_t = \calK_X(z) \widehat{\nu}.
\end{equation}
It follows from \eqref{eq:hznurep1} and the definition of $L_{\calK,X}$ that
\begin{equation}\label{eq:hinvers964a}
    \begin{split}
        L_{\calK,X} h(z) &= \frac{1}{T}\sum\nolimits_{t=p+1}^T \mathcal{K}(Y_{t-1},z) h(Y_{t-1})\\
        &= \frac{1}{T} \sum\nolimits_{t=p+1}^{T} \calK(Y_{t-1},z) 
        \sum\nolimits_{s=p+1}^{T} \calK(Y_{s-1},Y_{t-1}) \nu_s
=  \calK_X(z) \frac{1}{T} \calK(X,X) \widehat{\nu}.
    \end{split}
\end{equation}
We also have that \begin{equation}\label{eq:LKX1-0}
\begin{split}
    L_{\mathcal{K},X}(f)(z) 
    &= 
    \frac{1}{T} \sum\nolimits_{t=p+1}^{T} \mathcal{K}(X_{(t-p):(t-1)}, z)f(X_{(t-p):(t-1)}) \\
    &= \frac{1}{T} \sum\nolimits_{t=p+1}^{T} \mathcal{K}(Y_{t-1}, z) f(Y_{t-1})
 =  \frac{1}{T} \mathcal{K}_X(z) G_f(X).
    \end{split}
\end{equation}
Combining \eqref{eq:hinvers831a}, \eqref{eq:hinvers964a}, and \eqref{eq:LKX1-0},  we see that
\begin{equation}\label{eq:979l2}
\calK_X(z)\left( \left( \calK(X,X) + \lambda T I_{d(T-p)} \right) \widehat{\nu} - G_f(X)\right) = 0, \quad z \in \clx^p.
\end{equation}
Recalling that $\calK$ is positive definite, it follows from the fact that \eqref{eq:979l2} holds for all $z \in \clx^p$ that 
\[
\widehat{\nu} = \left(\calK(X,X) + \lambda TI_{d(T-p)}\right)^{-1}G_f(X).
\]
Together, the previous display and \eqref{eq:hznurep1} imply that 
\[
h(z) = \calK_X(z) \left(\calK(X,X) + \lambda TI_{d(T-p)}\right)^{-1}G_f(X)
.
\]
The result follows.
\end{proof}

\subsection{Geometric ergodicity} \label{se:geom-erg} 

Our concentration results crucially depend on the geometric ergodicity of the process $\{Y_t\}$ defined in \eqref{eq:defyn1b}. We rephrase here a few properties of nonlinear VAR models. We begin by recalling the definition of a geometrically ergodic Markov chain.

\begin{Def}
An $\clx^p$-valued Markov chain $\{Y_t\}$ is said to be geometrically ergodic if
there is a $\pi \in \clp(\clx^p)$, a $\rho \in (0,1)$, and a $\pi$-integrable measurable function $J : \clx^p \to [0,\infty)$ such that
    \begin{equation*}
        \| P^n(y,\cdot) - \pi(\cdot) \|_{TV} \leq \rho^n J(y), \hspace{0.2cm} y \in \clx^p, \; n \in \NN_0,
    \end{equation*} 
where 
\begin{equation}\label{eq:ynkernel1}
P^n(y,A) \doteq \PP_y(Y_n \in A), \quad A \in \clb(\clx^p), \; y \in \clx^p,
\end{equation}
and $\| \cdot \|_{TV}$ denotes the total variation norm on $\clp(\clx^p)$.
\end{Def}

\begin{Rem}
We require the somewhat nonstandard assumption that $J$ is integrable. In view of \eqref{eq:nonlinearVAR_2} and Assumption \ref{ass:gbounded}, this is satisfied, for instance, when Assumption \ref{ass:subgaussiannoise} holds. Then, one can infer that the chain is aperiodic and that Doeblin condition is satisfied, which implies the uniform ergodicity of the chain; see Theorem 16.0.2 in \cite{meyn2012markov}.
\end{Rem}

The following lemma says that  the lagged process $\{Y_t\}$ is geometrically ergodic on $\clx^p$. This follows immediately from Theorem 1 of \cite{lu2001l1}  (see also Remark 4.1 therein).
\begin{Lem}\label{prop:geomergodic}
  Under Assumptions \ref{ass:gbounded} and \ref{ass:subgaussiannoise}, the Markov chain  $\{Y_t\}$ defined in \eqref{eq:defyn1b} is geometrically ergodic.
\end{Lem}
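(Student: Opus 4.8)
The plan is to verify the hypotheses of a standard drift-condition result for nonlinear autoregressions, namely Theorem~1 of \cite{lu2001l1}, applied to the first-order representation $\{Y_t\}$ in \eqref{eq:defyn1b}--\eqref{eq:Ytdef}. The key observation is that, although the original model \eqref{eq:nonlinearVAR_2} is of order $p$, the lagged process $Y_t = (X_t', \dots, X_{t-p+1}')'$ is a genuine $\clx^p$-valued Markov chain with transition map $Y_t = G_Y(Y_{t-1}) + \xi_t$, where $G_Y$ shifts coordinates and applies $g$ in the top block, and $\xi_t = (\varepsilon_t', 0, \dots, 0)'$. So the task reduces to checking the irreducibility, aperiodicity, and geometric-drift conditions for this first-order chain.

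First I would record that, by Assumption~\ref{ass:gbounded}, $\|g\|_\infty \le M_g < \infty$, so $G_Y$ is uniformly bounded; thus $\|Y_t\| \le M_g\sqrt{p} + \|\xi_t\|$ and any drift function of the form $V(y) = 1 + \|y\|^2$ (or an exponential version) satisfies the geometric drift inequality $\EE[V(Y_t)\mid Y_{t-1}=y] \le \rho V(y) + b\,\mathbf 1_C(y)$ outside a compact (hence small, by the density condition below) set $C$, because the conditional mean of $Y_t$ lives in a fixed bounded region regardless of $y$ and the sub-Gaussian increments from Assumption~\ref{ass:subgaussiannoise} contribute only a bounded constant to the drift. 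Second, I would use the second part of Assumption~\ref{ass:subgaussiannoise}---that each $\varepsilon_{i,t}$ has a density $\psi$ that is strictly positive on compacts in its support---to establish that the chain is $\phi$-irreducible with respect to Lebesgue measure on the relevant support and that compact sets are small (a one-step minorization on the first block, propagated $p$ steps to cover all blocks), which also gives aperiodicity. These two ingredients are exactly the hypotheses of Theorem~1 of \cite{lu2001l1}, and invoking that theorem yields geometric ergodicity of $\{Y_t\}$, together with the existence of a unique stationary $\pi \in \clp(\clx^p)$.

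I expect the main obstacle to be purely expository rather than mathematical: matching the precise regularity and boundedness hypotheses in \cite{lu2001l1} (and the form of their drift/irreducibility conditions, including Remark~4.1 therein, which already treats exactly the bounded-$g$, absolutely-continuous-noise case) to the assumptions stated here, and being careful that the minorization on the full $\clx^p$-state is obtained by iterating the one-dimensional density lower bound over $p$ consecutive noise innovations so that compact subsets of $\clx^p$ are small. Once that bookkeeping is done, the conclusion is immediate. Since the excerpt explicitly says the result ``follows immediately from Theorem~1 of \cite{lu2001l1} (see also Remark 4.1 therein),'' the proof can legitimately be a short paragraph citing that theorem after noting that Assumptions~\ref{ass:gbounded} and \ref{ass:subgaussiannoise} supply, respectively, the boundedness of $g$ and the positivity of the innovation density needed to invoke it.
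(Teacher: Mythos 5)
Your proposal is correct and matches the paper's treatment: the paper itself offers no detailed argument for this lemma, simply asserting that it "follows immediately from Theorem 1 of \cite{lu2001l1} (see also Remark 4.1 therein)," where Assumption~\ref{ass:gbounded} supplies the boundedness of $g$ and the density condition in Assumption~\ref{ass:subgaussiannoise} supplies the irreducibility/minorization hypothesis. Your sketch of the underlying drift-plus-small-set verification is a useful elaboration, but it is the same route via the same citation.
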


There have been several works discussing geometric ergodicity for nonlinear VAR models, including \cite{cline1999geometric} and \cite{lu2001l1}. Throughout this work, we take advantage of this property.

\subsection{Abstract estimates} 
The convergence rates for the KRR estimator $\widehat{g}_T$ established in Theorem \ref{th:main_consistency} depend on the deterministic quantity  $\|g_{\lambda} - g\|_{\infty}$. Theorem \ref{thm:dev.deterministic.bound} presents one estimate that can be used to control this term.

\begin{Thm}\label{thm:dev.deterministic.bound} 
Suppose the function $g= (g_1,\dots,g_d)'$ is continuous and Assumption \ref{ass:boundednoise}. If $L_{\mathcal{K}}^{-r}g\in L^{2,d}(\clx^p, \pi )$ for some $r \in (1/2, 1]$, then 
\begin{equation} 
\| g_\lambda- g \|_{\infty} \leq \kappa \lambda^{r-1/2} \|L_{\calK}^{-r}g \|_{L^{2,d}(\clx^p,\pi)}. 
\end{equation} 
\end{Thm}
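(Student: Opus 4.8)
The plan is to combine the explicit representation of $g_\lambda$, the spectral calculus of the integral operator $L_{\calK}$, and the sup-norm bound of Lemma \ref{lem:boundinfinitybyHnorm}.

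\textbf{Step 1: reduce to an operator expression.} By the remark following Assumption \ref{ass:boundednoise} we may take $\clx^p$ compact, and since $g$ is continuous it is bounded, hence $g \in L^{2,d}(\clx^p,\pi)$. From \eqref{eq:glamdef1},
\[
g_\lambda - g = \bigl( (L_{\calK} + \lambda I)^{-1} L_{\calK} - I\bigr) g = -\lambda (L_{\calK} + \lambda I)^{-1} g .
\]
Set $h \doteq L_{\calK}^{-r} g \in L^{2,d}(\clx^p,\pi)$, so $g = L_{\calK}^{r} h$ and $\|h\|_{L^{2,d}(\clx^p,\pi)} = \|L_{\calK}^{-r} g\|_{L^{2,d}(\clx^p,\pi)}$. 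Using $r > 1/2$ to factor $L_{\calK}^{r} = L_{\calK}^{1/2} L_{\calK}^{r - 1/2}$ and that functional calculus of $L_{\calK}$ commutes, we get
\[
g_\lambda - g = -\lambda\, L_{\calK}^{1/2} (L_{\calK} + \lambda I)^{-1} L_{\calK}^{r - 1/2} h .
\]

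\textbf{Step 2: pass to the RKHS norm.} From the Mercer expansion of Assumption \ref{ass:separable_kernel_multi} and the description of $\bbH$ in Section \ref{se:repHil}, $L_{\calK}^{1/2}$ is a partial isometry from $L^{2,d}(\clx^p,\pi)$ onto $\bbH$, isometric on $(\ker L_{\calK})^{\perp}$. Since $r - 1/2 > 0$, the vector $w \doteq (L_{\calK} + \lambda I)^{-1} L_{\calK}^{r - 1/2} h$ lies in $(\ker L_{\calK})^{\perp}$, so $g_\lambda - g \in \bbH$ and, reading $g_\lambda - g$ through its (continuous) RKHS representative, which agrees with the continuous function $g_\lambda - g$,
\[
\|g_\lambda - g\|_{\bbH} = \lambda\, \|w\|_{L^{2,d}(\clx^p,\pi)} \le \lambda\, \bigl\| (L_{\calK} + \lambda I)^{-1} L_{\calK}^{r - 1/2}\bigr\|_{\op}\, \|h\|_{L^{2,d}(\clx^p,\pi)} .
\]
Lemma \ref{lem:boundinfinitybyHnorm} then gives $\|g_\lambda - g\|_{\infty} \le \kappa \|g_\lambda - g\|_{\bbH}$.

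\textbf{Step 3: spectral bound on the operator norm.} Since $L_{\calK}$ is nonnegative and self-adjoint, writing $s \doteq r - 1/2 \in (0, 1/2]$ the spectral mapping theorem gives
\[
\lambda\, \bigl\| (L_{\calK} + \lambda I)^{-1} L_{\calK}^{s}\bigr\|_{\op} = \sup_{\mu \in \sigma(L_{\calK})} \frac{\lambda\, \mu^{s}}{\mu + \lambda} \le \lambda^{s},
\]
where the last inequality follows from Young's inequality $\lambda^{1 - s}\mu^{s} \le (1 - s)\lambda + s\mu \le \lambda + \mu$ (and the value is $0 \le \lambda^s$ at $\mu = 0$). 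Chaining the three steps yields $\|g_\lambda - g\|_{\infty} \le \kappa\, \lambda^{r - 1/2}\, \|L_{\calK}^{-r} g\|_{L^{2,d}(\clx^p,\pi)}$, as claimed.

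\textbf{Main obstacle.} The only delicate part is the bookkeeping in Step 2: identifying $\bbH$ with $\operatorname{ran}(L_{\calK}^{1/2})$ and confirming $L_{\calK}^{1/2}$ is a partial isometry under Assumption \ref{ass:separable_kernel_multi}, and checking that the $L^{2}$-level identity for $g_\lambda - g$ indeed has a continuous representative (it does, since $g$ is continuous and $g_\lambda \in \bbH$ is continuous) so that Lemma \ref{lem:boundinfinitybyHnorm} applies to the genuine supremum norm. Everything else is routine operator calculus.
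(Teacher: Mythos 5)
Your proof is correct and is essentially the standard spectral-calculus argument (the identity $g_\lambda-g=-\lambda(L_{\calK}+\lambda I)^{-1}g$, the isometry $\|L_{\calK}^{1/2}f\|_{\bbH}=\|f\|_{L^{2,d}(\clx^p,\pi)}$, the bound $\sup_{\mu\ge 0}\lambda\mu^{s}/(\mu+\lambda)\le\lambda^{s}$, and Lemma \ref{lem:boundinfinitybyHnorm}), which is exactly the route taken in the reference the paper cites for this theorem rather than proving it inline. No gaps; your flagged caveats (identifying $\bbH$ with $\operatorname{ran}(L_{\calK}^{1/2})$ and checking the continuous representative) are the right ones and are handled by the continuity of $g$ and the membership $g_\lambda-g\in\bbH$ that your Step 2 establishes.
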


The condition $L_{\calK}^{-r}g\in L^{2,d}(\clx^p, \pi )$ originates from \cite{smale2005shannon}, where $r$ is interpreted as a smoothness parameter for $g$. When $r=1/2$, the condition $L_{\calK}^{-1/2} g \in L^{2,d}(\clx^p, \pi )$ is equivalent to the assumption that $g \in\bbH$, as shown by the identity 
\begin{align*}
    \|L_{\calK}^{-1/2}g\|^2_{L^{2,d}(\clx^p,\pi)} 
&= \sum\nolimits_{i=1}^d \left\|\sum\nolimits_{k=1}^{\infty} \sum\nolimits_{j=1}^{N(k)} \langle g_{i}, \phi_{i,j,k} \rangle_{L^{2}(\clx^p,\pi)} \phi_{i,j,k}/\sqrt{\lambda_{i,k}}\right\|^2_{L^{2}(\clx^p,\pi)} 
\\&= \sum\nolimits_{i=1}^d \sum\nolimits_{k=1}^{\infty} \sum\nolimits_{j=1}^{N(k)} \langle g_{i}, \phi_{i,j,k} \rangle^2_{L^{2}(\clx^p,\pi)}/\lambda_{i,k} = \|g\|^2_\bbH. 
\end{align*}
Together with Theorem \ref{th:main_consistency}, Theorem \ref{thm:dev.deterministic.bound} provides a convergence rate for our estimator.  A proof of Theorem \ref{thm:dev.deterministic.bound} can be found in \cite{liu2023estimation}. We also refer the reader to Theorems 6(b) and 7 in \cite{liu2023estimation}, where the authors assume that the respective kernel admits an  eigendecomposition with respect to the Fourier basis, resulting in 
estimates similar to those in Theorem \ref{thm:dev.deterministic.bound}.

\section{Proofs of main results} \label{se:proofs}

\subsection{Proofs of results in Section \ref{se:main-results}} \label{se:proofs-main-results} 

Recall the definition of the integral operator $L_{\mathcal{K}}$ in \eqref{eq:lkiintegralop2}. We consider a sample analogue of $L_{\clk}$ defined by
\begin{equation}
\begin{split}
    L_{\mathcal{K},X}(f)(z) 
    &= 
    \frac{1}{T} \sum\nolimits_{t=p+1}^{T} \mathcal{K}(X_{(t-p):(t-1)}, z)f(X_{(t-p):(t-1)}) \\
    &= \frac{1}{T} \sum\nolimits_{t=p+1}^{T} \mathcal{K}(Y_{t-1}, z) f(Y_{t-1})
    \\
    &=  \frac{1}{T} \mathcal{K}_X(z) G_f(X), \quad z \in \clx^p,
    \end{split}\label{eq:LKX1}
\end{equation}
 where $G_f(X)$ is defined analogously to $G_g(X)$ in \eqref{eq:vectorizedregressionproblem1}. Similarly, recall the function $g_{\lambda}$ defined in \eqref{eq:glamdef1}. We consider a sample analogue $g_{X,\lambda} : \clx^p \to \RR^d$ given by  
\begin{equation}\label{eq:gxlamdef1}
    g_{X,\lambda} \doteq (L_{\calK, X} + \lambda I)^{-1} L_{\calK, X} g.
\end{equation}

\noindent\textbf{Proof of Theorem \ref{th:main_consistency}}
Let $\delta$ satisfy \eqref{eq:delta-Th-4.1} and note that
\begin{equation} 
    \begin{aligned}
    \Prob\left(
    \|\widehat{g}_T - g \|_{\infty} > \delta \right)
    &\leq
    \Prob\left( \| \widehat{g}_T - g_{X,\lambda} \|_{\infty} > \delta/2 \right) +
    \Prob\left( 
    \|g_{X,\lambda}-g_\lambda\|_{\infty} + \| g_{\lambda} - g\|_{\infty} > \delta/2 \right),
\end{aligned}\label{eq:proofth4_al1}
\end{equation}
where $g_{\lambda}$ and $g_{X, \lambda}$ are defined in \eqref{eq:glamdef1} and \eqref{eq:gxlamdef1}, respectively. 
We consider the two summands in \eqref{eq:proofth4_al1} separately. 

Recall $L_{\calK, X}$ from \eqref{eq:LKX1}. Beginning with the second summand in \eqref{eq:proofth4_al1}, let 
\begin{align*}
    \mathcal{G}(X)&\doteq (L_{\calK, X} + \lambda I)^{-1} L_{\calK, X}( g) - (L_{\clk} + \lambda I)^{-1} L_{\clk} (g) 
    =  g_{X, \lambda} - g_{\lambda}.
    \end{align*}
With $\delta$ as in \eqref{eq:delta-Th-4.1}, we get
\begin{equation}\label{eq:thm4.1secondsummandterm}
\begin{split}
     \Prob\left( 
    \|g_{X,\lambda}-g_\lambda\|_\bbH + \| g_{\lambda} - g\|_{\infty} > \delta/2 \right) 
    &\leq
    \Prob\left( 
    \|g_{X,\lambda}-g_\lambda\|_\bbH > \frac{1}{\lambda} \sqrt{\frac{\log(T)}{T}} L^{\frac{b_2}{2}} \gamma \operatorname{C_{0}}(g) \right)
    \\&\leq 
    \Prob\left( 
    \|g_{X,\lambda}-g_\lambda\|_\bbH > \operatorname{C_{0}}(g) \frac{1}{\lambda} \sqrt{\frac{\log(T)}{T}} \right)
    \\&= 
    \PP\left( \| \mathcal{G}(X)\|_{\bbH} > \operatorname{C_{0}}(g) \frac{1}{\lambda} \sqrt{\frac{\log(T)}{T}} \right) \le c_1T^{-c_2},
\end{split}
\end{equation}
where the final inequality 
 is due to Lemma \ref{lem:newproximalvstrueregfunction}.

We now consider the first summand in \eqref{eq:proofth4_al1}. Using  \eqref{eq:vectorizedregressionproblem1}, \eqref{eq:ghatkernelrep1}, \eqref{eq:gxlamdef1}, and Lemma \ref{prop:operator-emp-eq}, we get 
\begin{align}
\widehat{g}_T(\cdot) - g_{X, \lambda}(\cdot) &= 
\clk_X(\cdot)  ( \clk(X,X) + \lambda T I_{d(T-p)})^{-1}Y - (L_{\clk,X}+\lambda I)^{-1}L_{\clk,X}(g)(\cdot) \nonumber\\
&= \clk_X(\cdot)  ( \clk(X,X) + \lambda T I_{d(T-p)})^{-1}Y - \clk_X(\cdot) (\clk(X,X) + \lambda T I_{d(T-p)})^{-1} G_g(X) \nonumber \\
&= \frac{1}{T} \clk_X(\cdot) ( \clk(X,X)/T + \lambda I_{d(T-p)})^{-1} \eta.\label{eq:ghattminusgxlambddef}
\end{align}
Let
\begin{align*}
Z_{X,T} &\doteq ( \clk(X,X)/T + \lambda I_{d(T-p)})^{-1} \\
&= \diag( (K_1(X,X)/T + \lambda I_{d(T-p)})^{-1}, \dots, (K_d(X,X)/T + \lambda I_{d(T-p)})^{-1}),
\end{align*}
and  observe that 
\begin{align}
\|\widehat{g}_T - g_{X,\lambda} \|^2_\bbH
&=
\frac{1}{T^2} \eta' ( \mathcal{K}(X,X)/T + \lambda I_{d(T-p)})^{-1} \clk(X,X) ( \clk(X,X)/T + \lambda I_{d(T-p)})^{-1} \eta
\label{eq:norm_ineq_1}
\\&=
\frac{1}{T^2} 
\tr( \eta' Z_{X,T} \mathcal{K}(X,X) Z_{X,T} \eta )
=
\frac{1}{T^2} 
\tr( \eta \eta' Z_{X,T} \mathcal{K}(X,X) Z_{X,T}  )
\nonumber
\\&\leq
(\lambda_{\max}(Z_{X,T}))^2
\frac{1}{T^2} 
\tr( \eta' \mathcal{K}(X,X)  \eta  )
\label{eq:norm_ineq_2}
\\&\leq
\lambda^{-2}
\frac{1}{T^2} 
\eta' \mathcal{K}(X,X)  \eta,
\label{eq:norm_ineq_3}
\end{align}
where the first identity follows from \eqref{eq:ghattminusgxlambddef} and Lemma \ref{pro:multdimhilbertkernelquadproduct}.
For the first inequality, note that the matrices $\eta \eta'$, $Z_{X,T}$ and $\mathcal{K}(X,X)$ are all positive semidefinite, so the inequality follows from Theorem 1 of \cite{fang1994inequalities}. The final inequality follows upon observing that
\begin{align*}
    \lambda_{\max}(Z_{X,T}) &= 
    \lambda_{\max}(( \mathcal{K}(X,X)/T + \lambda I_{d(T-p)})^{-1})\\
    &= (\lambda_{\min}( \mathcal{K}(X,X)/T + \lambda I_{d(T-p)}))^{-1}
    \leq \lambda^{-1}.
\end{align*}
It remains to show a high probability bound for {the quantity} in \eqref{eq:norm_ineq_3}. We distinguish between the cases where Assumption \ref{ass:separable_kernel_multi} is satisfied with $M<\infty$ or $M=\infty$.

\begin{itemize} \item \textit{$M<\infty$}: In this case, $\delta$ satisfies \eqref{eq:delta-Th-4.1} with $L\doteq M$.  Note that  $\lambda \delta$ satisfies the inequality in \eqref{eq:delta-Th-4.2}. Thus, we can apply Theorem \ref{th:finiteMercer}, which, under Assumptions \ref{ass:separable_kernel_multi} and \ref{ass:eigenfunctiongrowth}, ensures that there are constants $c_1,c_2 \in (0,\infty)$ such that for all $T$, with probability at least $1- c_1 T^{-c_2}$, 
\begin{equation} \label{eq:tildeineaulity}
\frac{1}{T^2} 
\eta' \mathcal{K}(X,X)  \eta \leq (\lambda \delta
)^2.
\end{equation}
 Then, the result follows on combining \eqref{eq:proofth4_al1}, \eqref{eq:thm4.1secondsummandterm}, \eqref{eq:norm_ineq_3}, and the probability bound in \eqref{eq:tildeineaulity}.

\item \textit{$M=\infty$}: 
In this case, $\delta$ satisfies \eqref{eq:delta-Th-4.1} with $L=M(T)$ as given in Assumption \ref{ass:momentboundtail}. Note that  $\lambda \delta$ satisfies the inequality in \eqref{eq:delta-Th-4.3}.
Thus, we can apply Theorem \ref{th:concentration_quafratic_form}, which, under Assumptions  \ref{ass:separable_kernel_multi}--\ref{ass:momentboundtail}, ensures that there are constants $c_1,c_2 \in (0,\infty)$ such that for all $T$, with probability at least $1- c_1 T^{-c_2}$, 
\begin{equation} \label{eq:tildeineaulity2}
\frac{1}{T^2} 
\eta' \mathcal{K}(X,X)  \eta \leq    (\lambda\delta)^2.
\end{equation}
Then, the result follows on combining \eqref{eq:proofth4_al1}, \eqref{eq:thm4.1secondsummandterm}, \eqref{eq:norm_ineq_3}, and the probability bound in \eqref{eq:tildeineaulity2}.
\end{itemize}
\vspace{-0.4cm}
\hfill $\blacksquare$

\noindent\textbf{Proof of Theorem \ref{th:finiteMercer}}
Note that
\begin{align}
    &
    \Prob \left( \eta' \mathcal{K}(X,X) \eta > \delta^2 \right)
    \nonumber \\
    &=
    \PP\left(  \frac{1}{T^2} 
    \sum\limits_{i=1}^{d}
\sum\limits_{t=p+1}^{T}\sum\limits_{s=p+1}^{T}  \varepsilon_{i,t} K_i(Y_{t-1},Y_{s-1})\varepsilon_{i,s}  > \delta^2\right) 
\nonumber
    \\&= \PP\left(\frac{1}{T^2} \sum\limits_{i=1}^d \sum\limits_{k=1}^{M} \lambda_{i,k} \sum\limits_{j=1}^{N(k)}  \sum\limits_{s=p+1}^{T} \sum\limits_{t=p+1}^{T} \varepsilon_{i,s} \varepsilon_{i,t} \phi_{i,j,k}(Y_{t-1}) \phi_{i,j,k}(Y_{s-1}) > \delta^2   \right) \nonumber \\
    &= \PP \left( \sum\limits_{i=1}^d \sum\limits_{k=1}^{M} \lambda_{i,k} \sum\limits_{j=1}^{N(k)} \left( \frac{1}{T} \sum\limits_{t=p+1}^{T} \varepsilon_{i,t} \phi_{i,j,k}(Y_{t-1}) \right)^2 > \delta^2\right)
    \nonumber\\
    &\le \sum\limits_{i=1}^{d}  \PP \left(  \sum\limits_{k=1}^{M} \lambda_{i,k} \sum\limits_{j=1}^{N(k)} \left( \frac{1}{T} \sum\limits_{t=p+1}^{T} \varepsilon_{i,t} \phi_{i,j,k}(Y_{t-1}) \right)^2 > \frac{\delta^2}{d} \right).
    \label{eq:firstquadraticform1220}
    \end{align}
For $i = 1,\dots, d$, define
\[
A_{i,M} \doteq \sum\limits_{k=1}^{M} \lambda_{i,k} \sum\limits_{j=1}^{N(k)} \left( \frac{1}{T} \sum\limits_{t=p+1}^{T} \varepsilon_{i,t} \phi_{i,j,k}(Y_{t-1})\right)^2,
\]
and observe that
\begin{align}
    &\PP\left( A_{i,M} > \frac{\delta^2}{d}\right) \nonumber
    \\&=  
    \PP\left( \sum\limits_{k=1}^{M}  \sum\limits_{j=1}^{N(k)} \left( \frac{1}{T} \sum\limits_{t=p+1}^{T} \varepsilon_{i,t}  \lambda_{i,k}^{1/2} \phi_{i,j,k}(Y_{t-1})\right)^2 > \frac{\delta^2}{d}\right)  
    \nonumber     \\
    &\le \PP\left( \max\limits_{k=1,\dots,M} 
    \max_{j =1, \dots, N(k)} \left( \frac{1}{T} \sum\limits_{t=p+1}^{T} \varepsilon_{i,t} \beta_{i,j,k}^{-1}\lambda_{i,k}^{1/2} \phi_{i,j,k}(Y_{t-1})\right)^2  \sum\limits_{k=1}^{M-1} \sum\limits_{j=1}^{N(k)} \beta_{i,j,k}^2 > \frac{\delta^2}{d}\right)
    \nonumber
    \\&\le 
    \PP \left( \max\limits_{k=1,\dots,M}   
    \max_{j =1, \dots, N(k)} \left( \frac{1}{T} \sum\limits_{t=p+1}^{T} \varepsilon_{i,t}  
    \beta_{i,j,k}^{-1}\lambda_{i,k}^{1/2} \phi_{i,j,k}(Y_{t-1})\right)^2  > \delta_1^2 \right), \label{eq:a1mfinitedecompbound1}
\end{align}
where \eqref{eq:a1mfinitedecompbound1} follows by \eqref{eq:assumptionbetasdef-2} in Assumption \ref{ass:eigenfunctiongrowth} with
\begin{equation}\label{eq:delta1def1}
        \delta_1 \doteq \delta \left( d b_1 M^{b_2} \right)^{-1/2}.
    \end{equation}
Then, from \eqref{eq:a1mfinitedecompbound1}, we see that, for  $\gamma>0$, 
\begin{align}
        &\PP\left(A_{i,M} > \frac{\delta^2}{d}\right)  \nonumber
          \\&\leq \PP\left( \max\limits_{k=1,\dots,M-1}   
          \max_{j =1, \dots, N(k)} \left| \frac{1}{T} \sum\limits_{t=p+1}^{T} \varepsilon_{i,t}   \beta_{i,j,k}^{-1}\lambda_{i,k}^{1/2} \phi_{i,j,k}(Y_{t-1}) \right|   > \delta_1 \right) \nonumber \\
          &\le \PP\left( \max\limits_{k=1,\dots,M}   
          \max_{j =1, \dots, N(k)}\bm{1}_{\{\|\eta\|_{\infty} {\le} \gamma\}} \left| \frac{1}{T} \sum\limits_{t=p+1}^{T} \varepsilon_{i,t}   \beta_{i,j,k}^{-1}\lambda_{i,k}^{1/2} \phi_{i,j,k}(Y_{t-1}) \right|   > \delta_1 \right) \nonumber \\
          &\hspace{1 cm} + \Prob( \|\eta\|_{\infty} > \gamma ) \label{eq:paimdeltasqd1-1} \\
          &\leq
          \PP\left( \max\limits_{k=1,\dots,M}   
          \max_{j =1, \dots, N(k)} \left| \frac{1}{T} \sum\limits_{t=p+1}^{T} \varepsilon_{i,t}   \beta_{i,j,k}^{-1}\lambda_{i,k}^{1/2} \phi_{i,j,k}(Y_{t-1}) \bm{1}_{\{ | \varepsilon_{i,t} | \leq \gamma \}} \right|   > \delta_1 \right) \nonumber 
          \\
          &\hspace{1cm}+
          \Prob( \|\eta\|_{\infty} > \gamma ),
\label{eq:paimdeltasqd1}
\end{align}
  where \eqref{eq:paimdeltasqd1-1} is due to a truncation argument by intersecting with the event $\{ \|\eta\|_{\infty} \leq \gamma \}$, and \eqref{eq:paimdeltasqd1} follows on observing that for  $a_1,\dots, a_n,b_1, \dots, b_n,c \in \RR$,
\[
\bm{1}_{\{\max\{a_1,\dots,a_n\} \le c\}}\left| \sum\nolimits_{i=1}^{n} b_i\right| \le \left| \sum\nolimits_{i=1}^{n} \bm{1}_{\{ a_i \le c\}}b_i  \right|.
\] 
Letting, for $i = 1,\dots,d$, 
\begin{equation}\label{eq:P1imdef1}
P_{1,i,M} \doteq  \PP\left( \max\limits_{k=1,\dots,M}   
          \max_{j =1, \dots, N(k)} \left| \frac{1}{T} \sum\limits_{t=p+1}^{T} \varepsilon_{i,t}   \beta_{i,j,k}^{-1}\lambda_{i,k}^{1/2} \phi_{i,j,k}(Y_{t-1}) \bm{1}_{\{ | \varepsilon_{i,t} | \leq \gamma \}} \right|   > \delta_1 \right),
\end{equation}
and 
\begin{equation}\label{eq:P2imdef1}
P_{2,i,M} \doteq  \Prob( \|\eta\|_{\infty} > \gamma) ,
\end{equation}
it follows from \eqref{eq:paimdeltasqd1} that
\begin{equation}\label{eq:probaimdecomposition}
\PP\left(A_{i,M} > \frac{\delta^2}{d}\right)  \le P_{1,i,M} + P_{2,i,M}.
\end{equation}
We consider the  probabilities $P_{1,i,M}$ and $P_{2,i,M}$ separately.
For $P_{1,i,M}$ in \eqref{eq:P1imdef1}, consider the function 
$F_{i,M} : (\mathcal{X}^p)^{T-p+1} \to [0,\infty)$ defined by 
\begin{equation*}
\begin{split}
   & F_{i,M}(y_p,\dots,y_T)\\
   &= \max_{k=1,\dots,M} \max_{j =1, \dots, N(k)} \frac{1}{T}
    \left| \sum_{t=p+1}^{T}  \left((y_{t,1} - f(y_{t-1}))_i \beta_{i,j,k}^{-1} \lambda^{\frac{1}{2}}_{i,k} \phi_{i,j,k} (y_{t-1})
    \bm{1}_{\{ | (y_{t,1} - f(y_{t-1}))_i | \leq \gamma \}} \right)
    \right|.
\end{split}
\end{equation*}
Then, 
    \begin{equation}\label{eq:p1imnewrep}
    \begin{split}
    P_{1,i,M} &= \PP\left( F_{i,M}(Y_{p:T})  >  \delta_1 \right) =  \Prob\left(
    F_{i,M}(Y_{p:T})   -   \EE\left( F_{i,M}(Y_{p:T})\right)
    > \delta_1 - \EE\left( F_{i,M}(Y_{p:T})\right)
    \right).
    \end{split}
    \end{equation}
    To establish an upper bound for the probability in  \eqref{eq:p1imnewrep}, we first estimate the expected value in that display. Observe that 
\begin{equation}\label{eq:expectedfimypt2}
\begin{split}
    &
    \left(\EE \left(F_{i,M}(Y_{p:T})\right)\right)^2
    \leq
    \EE \left(\left(F_{i,M}(Y_{p:T}) \right)^2\right)
    \\
    &=
    \EE\left( \max_{k=1,\dots,M} \max\limits_{j=1,\dots,N(k)}
    \left| \frac{1}{T} \sum_{t=p+1}^{T} \varepsilon_{i,t} \beta_{i,j,k}^{-1} \lambda^{\frac{1}{2}}_{i,k} \phi_{i,j,k}(Y_{t-1}) \bm{1}_{\{ | \varepsilon_{i,t} | \leq \gamma \}} \right|^2 \right) 
    \\
    &=
    \EE \left( \max_{k=1,\dots,M} \max\limits_{j=1,\dots,N(k)} \frac{1}{T^2} \sum_{s,t=p+1}^{T}
    \varepsilon_{i, s} \varepsilon_{i, t} \bm{1}_{\{ | \varepsilon_{i,s} |, | \varepsilon_{i,t} | \leq \gamma \}} \beta_{i,j,k}^{-2} \lambda_{i,k} \phi_{i,j,k} (Y_{t-1}) \phi_{i,j,k} (Y_{s-1})  \right)
    \\&\leq
    \frac{1}{T^2} \sum_{s,t=p+1}^{T}
    \EE \left( 
    \varepsilon_{i, s} \varepsilon_{i, t} \bm{1}_{\{ | \varepsilon_{i,s} |, | \varepsilon_{i,t} | \leq \gamma \}} 
    \max_{k=1,\dots,M} \max\limits_{j=1,\dots,N(k)} 
    \beta_{i,j,k}^{-2} \lambda_{i,k} \phi_{i,j,k} (Y_{t-1}) \phi_{i,j,k} (Y_{s-1})  \right),
\end{split}
\end{equation}
where the first inequality uses Jensen's inequality and the second inequality uses that the maximum of a sum is bounded above by the sum of the maxima. We consider the diagonal and cross terms of the  sum in the last line of \eqref{eq:expectedfimypt2} separately. Define
\begin{align}
    \Phi(Y_{t-1}, Y_{s-1}) \doteq
    \max_{k=1,\dots,M} \max\limits_{j=1,\dots,N(k)} 
    \beta_{i,j,k}^{-2} \lambda_{i,k} \phi_{i,j,k} (Y_{t-1}) \phi_{i,j,k} (Y_{s-1}),
\end{align}
and note, due to the definition of $\beta_{i,j,k}$ in \eqref{eq:assumptionbetasdef}, that $
        \| \Phi\|_{\infty} = 1$. For the cross terms in the last line of \eqref{eq:expectedfimypt2}, if  $t>s$, then  
\begin{align}
    &
    \EE \bigg( 
    \varepsilon_{i, s} \varepsilon_{i, t} \bm{1}_{\{ | \varepsilon_{i,s} |, | \varepsilon_{i,t} | \leq \gamma \}} 
    \Phi(Y_{t-1}, Y_{s-1}) \bigg)
    \nonumber
    \\&=
    \EE \bigg( 
    \EE \bigg[ 
    \varepsilon_{i, s} \varepsilon_{i, t} \bm{1}_{\{ | \varepsilon_{i,s} |, | \varepsilon_{i,t} | \leq \gamma \}} 
    \Phi(Y_{t-1}, Y_{s-1}) \mid 
    X_{1:(r-1)}, \; r= \max\{s,t\}
    \bigg] \bigg)
    \label{eq:expectedfx1tsquared+1}
    \\&=
    \EE \bigg( 
    \Phi(Y_{t-1}, Y_{s-1})  \varepsilon_{i, s}  \bm{1}_{\{|\varepsilon_{i,s}| \le \gamma\}}
    \EE \bigg[ 
   \varepsilon_{i, t} \bm{1}_{\{  | \varepsilon_{i,t} | \leq \gamma \}} 
    \mid 
    X_{1:(t-1)}
    \bigg] \bigg)
    \nonumber
    \\&=
    \EE \bigg( 
    \Phi(Y_{t-1}, Y_{s-1})
    \varepsilon_{i, s} \bm{1}_{\{ | \varepsilon_{i,s} | \leq \gamma \}}
    \bigg)
    \EE \big[ 
    \varepsilon_{i, t} \bm{1}_{\{ | \varepsilon_{i,t} | \leq \gamma \}} 
    \big]
    \label{eq:expectedfx1tsquared+2}
    \\&\leq
    \left(
    \EE ( 
    \varepsilon^2_{i, s} )
    \EE \big[ 
    \bm{1}_{\{ | \varepsilon_{i,s} | \leq \gamma \}} 
    \big]   \right)^{\frac{1}{2}} 
    \EE \big[ 
    \varepsilon_{i, t} \bm{1}_{\{ | \varepsilon_{i,t} | \leq \gamma \}} 
    \big]
    \label{eq:expectedfx1tsquared+3}
    \leq
    2\sigma^2 (dT)^{-1},
\end{align}
where \eqref{eq:expectedfx1tsquared+1} uses the law of total expectation and \eqref{eq:expectedfx1tsquared+2} uses that $\varepsilon_{t}$ is independent of $Y_1,\dots,Y_{t-1}$ and $\varepsilon_{1}, \dots, \varepsilon_{t-1}$. The first inequality in \eqref{eq:expectedfx1tsquared+3} uses the fact that $\|\Phi\|_{\infty} = 1$ and   the Cauchy-Schwarz inequality.    
The second inequality in \eqref{eq:expectedfx1tsquared+3} follows on observing that, since $\EE( \varepsilon_{i,t}) = 0$,  with $\gamma$ as in \eqref{eq:gamma-Th-4.2}, 
\begin{align}\label{eq:offidagonalPhi}
    \left( \EE \left( 
    \varepsilon_{i, t} \bm{1}_{\{ | \varepsilon_{i,t} | \leq \gamma \}} \right) \right)^2
    &=
    \left( - \EE \left( 
    \varepsilon_{i, t} \bm{1}_{\{ | \varepsilon_{i,t} | > \gamma \}} \right) \right)^2
    \leq
    \EE \left( 
    \varepsilon^2_{i, t} \right) 
    \EE \left( \bm{1}_{\{ | \varepsilon_{i,t} | > \gamma \}} \right)
    \\&
    \leq \sigma^2
    \Prob \left( | \varepsilon_{i,t} | > \gamma \right)
    \leq 2\sigma^2
    \exp\left( -\gamma^2/ (2\sigma^2) \right)
    \leq 2\sigma^2 (dT)^{-2},
\end{align}
where the third inequality in \eqref{eq:offidagonalPhi} follows from  Assumption \ref{ass:subgaussiannoise}.

For the diagonal terms of the sum in the last line of \eqref{eq:expectedfimypt2}, we have
\begin{align}\label{eq:diagtermsPhi}
    &
    \EE \bigg( 
    \EE \bigg[ 
    \varepsilon^2_{i, t} \bm{1}_{\{ | \varepsilon_{i,t} | \leq \gamma \}} 
    \Phi(Y_{t-1}, Y_{t-1}) \mid 
    X_{1:(t-1)}
    \bigg] \bigg)\\
&    =
    \EE \left( 
    \varepsilon^2_{i, t} \bm{1}_{\{ | \varepsilon_{i,t} | \leq \gamma \}} \right)
    \EE \bigg( 
    \Phi(Y_{t-1}, Y_{t-1}) \bigg)
    \leq
    \sigma^2,
\end{align}
where we once more used that $\|\Phi\|_{\infty} = 1$. Combining \eqref{eq:expectedfimypt2}, \eqref{eq:expectedfx1tsquared+3}, \eqref{eq:offidagonalPhi} and \eqref{eq:diagtermsPhi}, we see that
\begin{equation}\label{eq:expfimypt23sigmabound}
(\EE(F_{i,M}(Y_{p:T})))^2 \le 3\frac{\sigma^2}{T}.
\end{equation}

From \eqref{eq:p1imnewrep} and \eqref{eq:expfimypt23sigmabound}, we see that
\begin{equation}\label{eq:boundp1im1423}
\begin{split}
P_{1,i,M} &\le   \Prob\left(
    F_{i,M}(Y_{p:T})   -   \EE\left( F_{i,M}(Y_{p:T})\right)
    > \delta_{2}
    \right),
\end{split}
\end{equation}
where, with $\delta_1$ as in \eqref{eq:delta1def1}, 
\begin{equation}\label{eq:delta2delta1minussigmasqrtt}
\delta_2 \doteq \delta_1 - \frac{2 \sigma }{\sqrt{T}}.
\end{equation}
Note that Lemma \ref{pro:separetelybounded} says that $F_{i,M}$ is separately bounded by $ { 2 \gamma}/{T}$. Then, applying Theorem 0.2 in \cite{dedecker2015subGaussian} with $L =  { 2 \gamma}/{T}$,  we can see that there is a $C_{\operatorname{mc}} \in (0,\infty)$ such that 
\begin{equation*}\label{eq:finalprobp1imbound}
P_{1,i,M} \le 2 \exp \left( -\frac{ C_{\operatorname{mc}} \delta_2^2 }{T \left( { 2 \gamma}/{T}\right)^2}\right) = 2 \exp\left( - \frac{ C_{\operatorname{mc}}  T \delta_2^2}{4\gamma^2} \right).
\end{equation*}
It remains to bound the probability $P_{2,i,M}$ defined in  \eqref{eq:P2imdef1}. Using  Lemma \ref{pro:extremevalepsilons}, we see that 
\begin{equation}\label{eq:p2improbobound}
P_{2,i,M} \le \exp\left(- \left( \gamma - \sqrt{ 2 \sigma^2 \log(dT)} \right)^2/(2\sigma^2) \right).
\end{equation}
Together, \eqref{eq:probaimdecomposition},  \eqref{eq:boundp1im1423}, \eqref{eq:delta2delta1minussigmasqrtt}, and \eqref{eq:p2improbobound} show that
\begin{equation}\label{eq:paimupperbound1}
\begin{split}
    &
    \PP\left( A_{i,M} > \frac{\delta^2}{d} \right) 
    \\&\le 
    2 \exp\left( - \frac{ C_{\operatorname{mc}} T \delta_2^2}{4\gamma^2} \right)
    +
    \exp\left(- \left( \gamma - \sqrt{2 \sigma^2 \log(dT)} \right)^2/(2\sigma^2) \right),
    \\&\le 
    2 \exp\left( - C_{\operatorname{mc}}  \frac{T}{\gamma^2} \left( \delta \left( d b_1 M^{b_2} \right)^{-1/2} - \frac{\sigma }{\sqrt{T}} \right)^2 \right)
    +
    \exp\left(- \left( \gamma - \sqrt{ 2 \sigma^2 \log(dT)} \right)^2/(2\sigma^2) \right)
    \\&\leq
    \exp\left(- C_{\operatorname{mc}} c_{1,T}(\delta, \gamma, M) \right)
    +
    \exp\left(- c_{2,T}(\gamma) \right),
\end{split}
\end{equation}
where $c_{1,T}, c_{2,T}$ are as in \eqref{eq:functions-g1-g2}. Together, \eqref{eq:firstquadraticform1220}  and \eqref{eq:paimupperbound1} complete the proof. 
\hfill $\blacksquare$

\vspace{0.5cm}
\noindent
\textbf{Proof of Theorem \ref{th:concentration_quafratic_form}}
In the following, we write $\eta' \mathcal{K}(X,X) \eta$ as a sum of a kernel with a finite series expansion and a remainder term. We then apply Theorem \ref{th:finiteMercer} to the part of the decomposition corresponding to a kernel with a finite expansion. The remainder term is handled using suitable upper bounds for the second moments of the eigenfunctions in the tail of the series expansion. Using arguments similar to those in \eqref{eq:firstquadraticform1220}, we can decompose the probability of interest as
\begin{align}
    \Prob \left( \eta' \mathcal{K}(X,X) \eta > \delta^2 \right)
    &= \PP \left( \sum\limits_{i=1}^d \sum\limits_{k=1}^{\infty} \lambda_{i,k} \sum\limits_{j=1}^{N(k)} \left( \frac{1}{T} \sum\limits_{t=p+1}^{T} \varepsilon_{i,t} \phi_{i,j,k}(Y_{t-1}) \right)^2 > \delta^2\right)
    \nonumber\\
    &\le \sum\limits_{i=1}^{d}  \PP \left(  \sum\limits_{k=1}^{\infty} \lambda_{i,k} \sum\limits_{j=1}^{N(k)} \left( \frac{1}{T} \sum\limits_{t=p+1}^{T} \varepsilon_{i,t} \phi_{i,j,k}(Y_{t-1}) \right)^2 > \frac{\delta^2}{d} \right)
    \nonumber\\
    &\le \sum\limits_{i=1}^{d} \left( \PP \left( A_{i,M}  > \frac{\delta^2}{2d} \right) + \PP\left( B_{i,M} > \frac{\delta^2}{2d} \right)\right),
    \label{eq:use_sep_3}
    \end{align}
where
    \begin{equation*}
    A_{i,M} = \sum\nolimits_{k=1}^{M-1} \lambda_{i,k} \sum\nolimits_{j=1}^{N(k)} \left( \frac{1}{T} \sum\nolimits_{t=p+1}^{T} \varepsilon_{i,t} \phi_{i,j,k}(Y_{t-1})\right)^2, 
    \end{equation*}
    and
    \begin{equation*}
    B_{i,M}  = \sum\nolimits_{k=M}^{\infty} \lambda_{i,k} \sum\nolimits_{j=1}^{N(k)} \left( \frac{1}{T} \sum\nolimits_{t=p+1}^{T} \varepsilon_{i,t} \phi_{i,j,k}(Y_{t-1})\right)^2.
\end{equation*}
We consider the two probabilities in \eqref{eq:use_sep_3} separately. For the first summand, using Theorem \ref{th:finiteMercer}, we  see that\begin{equation}\label{eq:applyfinitemercerlem}
\begin{split}
  \sum\nolimits_{i=1}^{d}    \PP \left( A_{i,M}  > \frac{\delta^2}{2d} \right) &\leq 
    d\exp\left(- C_{\operatorname{mc}} c_{1,T}(\delta, \gamma, M) \right)
    +
    d\exp\left(- c_{2,T}(\gamma) \right).
    \end{split}
\end{equation}
For the second summand in \eqref{eq:use_sep_3}, since $B_{i,M} \ge 0$, using Markov's inequality gives
\begin{align} \label{eq:probdboundbmarkov}
    \PP\left(B_{i,M} > \frac{\delta^2}{2d}\right) &\le \frac{2d}{\delta^2} \EE(B_{i,M}).
\end{align}
Applying the Fubini-Tonelli Theorem, we have
\begin{align}
    \EE(B_{i,M}) 
    &= 
    \EE \left( \sum\limits_{k=M}^{\infty} \lambda_{i,k} \sum\limits_{j=1}^{N(k)} \left( \frac{1}{T} \sum\limits_{t=p+1}^{T} \varepsilon_{i,t} \phi_{i,j,k}(Y_{t-1})\right)^2 \right) \nonumber \\
    &\leq \frac{\sigma^2}{T^2} \sum\limits_{k=M}^{\infty} \lambda_{i,k} \sum\limits_{j=1}^{N(k)} \sum\limits_{t=p+1}^{T} \EE( (\phi_{i,j,k}(Y_{t-1}))^2),
    \label{eq:estimatebm1a}
\end{align}
where \eqref{eq:estimatebm1a} follows upon using the law of total expectation and noting that $\varepsilon_{t}$ is independent of $Y_{t-1}$, that $\EE(\varepsilon_{i,t}) = 0$, that $\varepsilon_{i,t}$ and $\varepsilon_{i,s}$ are independent whenever $s \ne t$, and that $\EE(\varepsilon_{i,t}^2) \leq \sigma^2$, where $\sigma^2 \doteq \max_{i=1,\dots, d} \sigma_{i}^2$. Then, using Assumption  \ref{ass:momentboundtail}, it follows from \eqref{eq:estimatebm1a} that 
\begin{equation}\label{eq:ebimmomentfirst}
\begin{split}
    \EE(B_{i,M})
    &\le
    \frac{\sigma^2}{T^2} \sum\limits_{k=M}^{\infty} \lambda_{i,k}  \sum\limits_{j=1}^{N(k)} \sum\limits_{t=p+1}^{T} \EE( (\phi_{i,j,k}(Y_{t-1}))^2) 
    \le 
    \frac{\sigma^2}{T}  \sum\limits_{k=M(T)}^{\infty} \alpha_k
    \le 
    \frac{\sigma^2}{T}  \beta_1 T^{-\beta_2}.
\end{split}
\end{equation}
From \eqref{eq:probdboundbmarkov} and \eqref{eq:ebimmomentfirst}, we see that 
\begin{equation}\label{eq:bimfinalprobineq}
\PP\left( B_{i,M} > \frac{\delta^2}{2d}\right) \le  \frac{ 2 d \sigma^2}{ \delta^2  T} \beta_1 T^{-\beta_2}.
\end{equation}
Finally, on combining \eqref{eq:use_sep_3},  \eqref{eq:applyfinitemercerlem}, and \eqref{eq:bimfinalprobineq}, we see that
\[
\Prob \left( \eta' \mathcal{K}(X,X) \eta > \delta^2 \right)
\leq
d\exp\left(- C_{\operatorname{mc}} c_{1,T}(\delta, \gamma, M) \right)
+
d\exp\left(- c_{2,T}(\gamma) \right)
+
\frac{ 2 d^2 \sigma^2 }{ \delta^2 T} \beta_1 T^{-\beta_2},
\]
where $c_{1,T}$ and $c_{2,T}$ are as in  \eqref{eq:functions-g1-g2}.
\hfill $\blacksquare$

\subsection{Auxiliary concentration results and their proofs}

\begin{Lem}\label{lem:newproximalvstrueregfunction}
For the regression function $g : \clx^p \to \RR^d$ satisfying Assumption \ref{ass:gbounded}, let
\begin{align} \label{eq:Gbar}
\mathcal{G}(X) & \doteq (L_{\calK, X} + \lambda I)^{-1} L_{\calK, X} g - (L_\calK + \lambda I)^{-1} L_\calK  g .
\end{align}
There are constants $c_1,c_2 \in (0,\infty)$ and $\operatorname{c_0} \in (0,\infty)$ such that, if $\delta \geq \operatorname{C_{0}}(g) \frac{1}{\lambda} \sqrt{{\log(T)}/{T}}$ where  
$\operatorname{C_{0}}(g) \doteq  \operatorname{c_0} \sqrt{d} \kappa^2 \| g \|_{\infty}$, then, with probability at least $1-c_1T^{-c_2}$, we have 
$\|\mathcal{G}(X) \|_\bbH\leq \delta$.
\end{Lem}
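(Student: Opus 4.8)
The plan is to reduce $\mathcal{G}(X)$ to empirical-minus-population integral operators applied to $g$, split off a ``source-condition'' term, and control each piece by a concentration inequality for additive functionals of the geometrically ergodic chain $\{Y_t\}$ from Lemma~\ref{prop:geomergodic}. With $g_{X,\lambda} = (L_{\calK,X}+\lambda I)^{-1}L_{\calK,X}g$ and $g_\lambda = (L_\calK+\lambda I)^{-1}L_\calK g$, the normal equations $(L_{\calK,X}+\lambda I)g_{X,\lambda}=L_{\calK,X}g$, $(L_\calK+\lambda I)g_\lambda=L_\calK g$ and $\lambda g_\lambda = L_\calK(g-g_\lambda)$ give
\begin{equation*}
\mathcal{G}(X) = (L_{\calK,X}+\lambda I)^{-1}(L_{\calK,X}-L_\calK)(g-g_\lambda) = T_1 - T_2,
\end{equation*}
with $T_1 = (L_{\calK,X}+\lambda I)^{-1}(L_{\calK,X}-L_\calK)g$ and $T_2 = (L_{\calK,X}+\lambda I)^{-1}(L_{\calK,X}-L_\calK)g_\lambda$. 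On $\bbH$, both $L_{\calK,X}=\tfrac1T\sum_{t=p+1}^{T}\Theta_{Y_{t-1}}$ and $L_\calK=\EE_\pi[\Theta_{Y_0}]$ are averages of the positive rank-$\le d$ operators $\Theta_y\colon h\mapsto \clk(y,\cdot)h(y)$; by Lemma~\ref{prop:clkkernelbasicprops} and the reproducing property, $\|\Theta_y\|_{\mathrm{op}}\le\kappa^2$, $\|\Theta_y h\|_\bbH\le\kappa\|h(y)\|$, $\|L_\calK\|_{\mathrm{op}}\le \tr(L_\calK)\le\kappa^2$, and the effective dimension satisfies $\tr\bigl((L_\calK+\lambda I)^{-1}L_\calK\bigr)\le\kappa^2/\lambda$. (The mismatch between $\EE_\pi L_{\calK,X}$ and $L_\calK$ is a deterministic term of size $O(p/T)$, absorbed throughout.)

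For $T_1$, since $L_{\calK,X}\succeq 0$ on $\bbH$ we have $\|(L_{\calK,X}+\lambda I)^{-1}\|_{\mathrm{op}}\le\lambda^{-1}$, so $\|T_1\|_\bbH\le\lambda^{-1}\|(L_{\calK,X}-L_\calK)g\|_\bbH$. Here $(L_{\calK,X}-L_\calK)g$ is a centered empirical average of the $\bbH$-valued terms $\clk(Y_{t-1},\cdot)g(Y_{t-1})$, each bounded in $\bbH$-norm by $\kappa\|g(Y_{t-1})\|\le\kappa\sqrt d\,\|g\|_\infty$; a Hoeffding-type bound for Hilbert-space valued additive functionals of the geometrically ergodic chain (the same machinery underlying our other concentration results) yields $\|(L_{\calK,X}-L_\calK)g\|_\bbH\le C\kappa\sqrt d\,\|g\|_\infty\sqrt{\log(T)/T}$ with probability at least $1-cT^{-c'}$, hence $\|T_1\|_\bbH\lesssim \lambda^{-1}\kappa\sqrt d\,\|g\|_\infty\sqrt{\log(T)/T}$.

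The term $T_2$ is the crux, since the naive bound $\lambda^{-1}\|(L_{\calK,X}-L_\calK)g_\lambda\|_\bbH$ together with $\|g_\lambda\|_\infty\le\kappa\|g_\lambda\|_\bbH\le\kappa\sqrt d\,\|g\|_\infty/(2\sqrt\lambda)$ would produce an extra $\lambda^{-1/2}$. Instead one works on the event $\mathcal{E}_\lambda=\{\,\|(L_\calK+\lambda I)^{-1/2}(L_{\calK,X}-L_\calK)(L_\calK+\lambda I)^{-1/2}\|_{\mathrm{op}}\le 1/2\,\}$, on which $(L_{\calK,X}+\lambda I)^{-1}=(L_\calK+\lambda I)^{-1/2}B_X^{-1}(L_\calK+\lambda I)^{-1/2}$ with $\|B_X^{-1}\|_{\mathrm{op}}\le 2$, so that
\begin{equation*}
\|T_2\|_\bbH\le \frac{2}{\sqrt\lambda}\,\bigl\|(L_\calK+\lambda I)^{-1/2}(L_{\calK,X}-L_\calK)(L_\calK+\lambda I)^{-1/2}\bigr\|_{\mathrm{op}}\,\bigl\|(L_\calK+\lambda I)^{1/2}g_\lambda\bigr\|_\bbH.
\end{equation*}
The last factor is where the $L_\calK$-smoothing in $g_\lambda$ pays off: $(L_\calK+\lambda I)^{1/2}g_\lambda=(L_\calK+\lambda I)^{-1/2}L_\calK g=\bigl[(L_\calK+\lambda I)^{-1/2}L_\calK^{1/2}\bigr](L_\calK^{1/2}g)$, and since $\|(L_\calK+\lambda I)^{-1/2}L_\calK^{1/2}\|_{\mathrm{op}}\le 1$ and $L_\calK^{1/2}\colon L^{2,d}(\clx^p,\pi)\to\bbH$ is a partial isometry, $\|(L_\calK+\lambda I)^{1/2}g_\lambda\|_\bbH\le\|g\|_{L^{2,d}(\clx^p,\pi)}\le\sqrt d\,\|g\|_\infty$, with no $\lambda$. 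For the middle factor, an operator/matrix Bernstein-type inequality for the chain $\{Y_t\}$ applied to the self-adjoint operators $(L_\calK+\lambda I)^{-1/2}\Theta_{Y_{t-1}}(L_\calK+\lambda I)^{-1/2}$ (operator norm $\le\kappa^2/\lambda$, expected trace $\le\kappa^2/\lambda$) gives, provided $\lambda T\gtrsim \kappa^2\log T$, a bound $\lesssim \kappa\lambda^{-1/2}\sqrt{\log(T)/T}$ with probability at least $1-cT^{-c'}$; in particular $\mathcal{E}_\lambda$ holds. Combining, $\|T_2\|_\bbH\lesssim \lambda^{-1}\kappa^2\sqrt d\,\|g\|_\infty\sqrt{\log(T)/T}$ on that event (using $\kappa\le\kappa^2$).

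When $\lambda T\lesssim\kappa^2\log T$ the assertion is trivial: the explicit representations of $g_{X,\lambda}$ and $g_\lambda$ give $\|\mathcal{G}(X)\|_\bbH\le\|g_{X,\lambda}\|_\bbH+\|g_\lambda\|_\bbH\le\sqrt d\,\|g\|_\infty/\sqrt\lambda$, which is already $\le\operatorname{C_{0}}(g)\,\lambda^{-1}\sqrt{\log(T)/T}$ once $\operatorname{c_0}$ is large enough; and in the complementary regime a union bound over the events from the $T_1$- and $T_2$-estimates gives $\|\mathcal{G}(X)\|_\bbH\le\|T_1\|_\bbH+\|T_2\|_\bbH\le\operatorname{c_0}\sqrt d\,\kappa^2\|g\|_\infty\,\lambda^{-1}\sqrt{\log(T)/T}=\operatorname{C_{0}}(g)\,\lambda^{-1}\sqrt{\log(T)/T}\le\delta$ with probability at least $1-c_1T^{-c_2}$, for suitable $c_1,c_2,\operatorname{c_0}$. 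The main obstacle is the $T_2$ estimate: obtaining only a single power of $1/\lambda$ forces one to replace the naive resolvent bound by the relative-operator-norm argument together with the identity $(L_\calK+\lambda I)^{1/2}g_\lambda = (L_\calK+\lambda I)^{-1/2}L_\calK g$ and the ensuing $\lambda$-free bound on $\|(L_\calK+\lambda I)^{1/2}g_\lambda\|_\bbH$, and to invoke an operator-valued Bernstein inequality in the dependent (Markov chain) setting, which is the technically heaviest ingredient; the reduction and the $T_1$ term are routine.
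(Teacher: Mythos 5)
Your reduction is algebraically sound: the identity $\mathcal{G}(X)=(L_{\calK,X}+\lambda I)^{-1}(L_{\calK,X}-L_{\calK})(g-g_\lambda)$ follows from the two normal equations together with $\lambda g_\lambda=L_\calK(g-g_\lambda)$, and your $T_1$ bound is exactly the route the paper takes: the crude resolvent bound $\|(L_{\calK,X}+\lambda I)^{-1}\|_{\mathrm{op}}\le\lambda^{-1}$ followed by a Hilbert-norm Hoeffding bound for the centered sum $\frac1T\sum_t\clk(Y_{t-1},\cdot)g(Y_{t-1})-L_\calK g$ (this is the paper's Lemma \ref{le:new_ana_LIU_15}, proved via Dedecker's bounded-differences inequality for the geometrically ergodic chain plus a variance computation for the mean). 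Where you genuinely diverge is the $g_\lambda$-dependent term. The paper does not use your decomposition; it writes $\mathcal{G}(X)$ as a telescoping sum, bounds the second piece by $\lambda^{-1}\|L_{\calK,X}g_\lambda-L_\calK g_\lambda\|_\bbH$ via a resolvent identity, establishes $\|g_\lambda\|_\infty\le\kappa\lambda^{-1/2}\|g\|_\infty$ from the variational characterization, and applies the same Hilbert-norm concentration lemma a second time. This is much more elementary than your relative-operator-norm argument, at the cost of an extra factor $\lambda^{-1/2}$ in that term; your approach, if completed, would actually deliver the cleaner $\lambda^{-1}$ dependence appearing in the lemma statement.

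The gap is your middle factor. The bound $\bigl\|(L_\calK+\lambda I)^{-1/2}(L_{\calK,X}-L_\calK)(L_\calK+\lambda I)^{-1/2}\bigr\|_{\mathrm{op}}\lesssim\kappa\lambda^{-1/2}\sqrt{\log(T)/T}$, and hence the event $\mathcal{E}_\lambda$ on which $\|B_X^{-1}\|_{\mathrm{op}}\le 2$, rests on an operator-norm Bernstein inequality for the dependent sequence $\{Y_t\}$ that you assert but do not establish. This is not obtainable from the paper's concentration machinery: Theorem 0.2 of Dedecker--Gou\"ezel controls real-valued separately bounded functions of the chain, which the paper uses to bound $\bbH$-norms of vector-valued averages; passing to the operator norm of a sandwiched empirical covariance operator on an infinite-dimensional $\bbH$ additionally requires controlling the expectation of that operator norm (via an intrinsic-dimension or effective-rank argument) and a matrix/operator concentration inequality valid under Markov dependence, neither of which is off-the-shelf here. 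Until that ingredient is supplied (with explicit dependence on $\lambda$, $\kappa$, and the mixing constants), the $T_2$ estimate and the split into the regimes $\lambda T\gtrsim\kappa^2\log T$ versus $\lambda T\lesssim\kappa^2\log T$ are unsupported. The remaining steps (the partial-isometry bound $\|(L_\calK+\lambda I)^{1/2}g_\lambda\|_\bbH\le\|g\|_{L^{2,d}(\clx^p,\pi)}$, the trivial bound in the small-$\lambda T$ regime, and the final union bound) are fine.
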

\begin{proof}
Observe that
\begin{align*}
&\mathcal{G}(X)\\&= (L_{\calK, X} + \lambda I)^{-1} L_{\calK,X}g - (L_{\calK,X} + \lambda I)^{-1} L_{\calK} g +  (L_{\calK,X} + \lambda I)^{-1} L_{\calK} g- (L_{\calK} + \lambda I)^{-1} L_{\calK} g,
\end{align*}
so
\begin{equation}\label{eq:barGtriangleineq1}
\|\mathcal{G}(X)\|_{\bbH} \le \|  (L_{\calK, X} + \lambda I)^{-1} 
    \left( L_{\calK, X} g - L_{\calK}g \right) \|_{\bbH} +\|
    (L_{\calK, X} + \lambda I)^{-1}L_{\calK}g - 
    (L_\calK + \lambda I)^{-1} L_\calK  g\|_{\bbH}.
\end{equation}
We consider the two summands in \eqref{eq:barGtriangleineq1} separately.
First, since $L_{\mathcal{K},X}$ is self adjoint and positive semidefinite, we have
\begin{equation}\label{eq:lkxinverselambdaineq}
    \|  ( L_{\calK, X} + \lambda I )^{-1} \left( L_{\calK, X} g - L_{\calK}g \right)  \|_{\bbH}  \le \lambda^{-1} \|  L_{\calK, X} g - L_{\calK}g \|_{\bbH}
    \le \delta,
\end{equation}
where the last inequality holds for 
$\delta \geq \operatorname{c_0} \lambda^{-1} \sqrt{d} \kappa\| g \|_{\infty} \sqrt{{\log(T)}/{T}}$
with probability at least $1-c_1T^{-c_2}$, which follows from Lemma \ref{le:new_ana_LIU_15} since $g$ is bounded.
For the second summand in \eqref{eq:barGtriangleineq1}, we can once more use the fact that $L_{\calK,X}$ is positive semidefinite to see that
\begin{equation}\label{eq:lem6.3largedisplayinverselam}
\begin{split}
&
    \| (L_{\calK, X} + \lambda I)^{-1}L_{\calK}g - 
    (L_\calK + \lambda I)^{-1} L_\calK  g \|_\bbH  \\
&=  
    \|(L_{\calK, X}  + \lambda I)^{-1} (L_\calK + \lambda I) (L_\calK + \lambda I)^{-1} L_\calK g - (L_{\calK, X}  + \lambda I)^{-1}(L_{\calK, X}  + \lambda I)(L_\calK + \lambda I)^{-1} L_\calK  g \|_\bbH  \\
&=
    \|(L_{\calK, X}  + \lambda I)^{-1} \left(
    (L_\calK + \lambda I) (L_\calK + \lambda I)^{-1} L_\calK g - (L_{\calK, X}  + \lambda I)(L_\calK + \lambda I)^{-1} L_\calK  g \right) \|_\bbH\\
&\le  \lambda^{-1}     
    \left\|
    L_\calK \left((L_\calK + \lambda I)^{-1} L_\calK g\right) - 
    L_{\calK, X}\left((L_\calK + \lambda I)^{-1} L_\calK  g \right)\right\|_\bbH\\
    &= \lambda^{-1} \| L_{\calK} g_{\lambda}- L_{\calK,X} g_{\lambda} \|_{\bbH},
\end{split}
\end{equation}
where $g_{\lambda}$ is as in \eqref{eq:glamdef1}.  To apply Lemma \ref{le:new_ana_LIU_15}, we need to show that $g_{\lambda}$ is bounded. Taking $\widetilde{g} \doteq 0$ in \eqref{eq:glambdaoptimize}, we see that
\[
\| g_{\lambda} - g\|_2^2 + \lambda \|g_{\lambda}\|_{\bbH}^2 \le \| \widetilde{g} - g\|_2^2 + \lambda \| \widetilde{g}\|_{\bbH}^2 = \|g\|_2^2,
\]
which ensures that 
\[
\| g_{\lambda}\|_2^2 \le \sqrt{2} \| g\|_2, \quad \|g_{\lambda}\|_{\bbH} \le \lambda^{-1/2} \|g\|_{2}.
\]
Using Lemma \ref{lem:boundinfinitybyHnorm}, we see that 
\[
\|g_{\lambda}\|_{\infty} \le \kappa \|g_{\lambda}\|_{\bbH} \le \kappa \lambda^{-1/2}  \| g\|_2 \le \kappa \lambda^{-1/2}  \| g\|_\infty,
\]
which shows that $g_{\lambda}$ is bounded. 
Since $g_{\lambda}$ is bounded, it follows from Lemma \ref{le:new_ana_LIU_15} that  there is some  $\operatorname{c_0} > 0$ such that 
with 
$\delta \geq \operatorname{c_0} \sqrt{d} \kappa^2 \lambda^{-1/2} \| g \|_{\infty} \sqrt{{\log(T)}/{T}}$
there are  constants $c_1,c_2 \in (0,\infty)$ such that, with probability at least $1 - c_1 T^{-c_2}$,  
$\| L_{\calK,X}g_{\lambda} - L_{\calK}g_{\lambda} \|_{\bbH} \le  \delta$.
\end{proof}

\begin{Lem} \label{le:new_ana_LIU_15}
    Let $f = (f_1,\dots,f_d)': \clx^p \to \RR^d$ be a bounded map. There are  constants $c_1,c_2 \in (0,\infty)$ and $\operatorname{c_0}>0$ such that, if $\delta \geq \operatorname{c_0} \sqrt{d} \kappa\| f \|_{\infty} \sqrt{{\log(T)}/{T}}$, then, with probability at least $1 - c_1 T^{-c_2}$, 
    \[
    \| L_{\calK, X}f - L_{\calK}f\|_{\bbH} \le  \delta.
    \]
    \end{Lem}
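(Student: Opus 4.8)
The plan is to view $L_{\calK, X}f$ as an empirical average over the geometrically ergodic chain $\{Y_t\}$ of bounded $\bbH$-valued summands whose common mean is $L_{\calK}f$ (up to a negligible $O(p/T)$ boundary effect), and to bound its deviation from the mean by combining a variance estimate with a bounded-differences concentration inequality for functionals of the chain.

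First I would reduce to one coordinate at a time. Since $\calK = \diag(K_1,\dots,K_d)$ and $\bbH = \bigtimes_{i=1}^d\bbH_i$, writing $(L_{\calK,X}f)_i(z) = \frac1T\sum_{t=p+1}^T K_i(Y_{t-1},z)f_i(Y_{t-1})$ (and analogously $(L_{\calK}f)_i$), we have $\|L_{\calK,X}f - L_{\calK}f\|_{\bbH}^2 = \sum_{i=1}^d\|(L_{\calK,X}f)_i - (L_{\calK}f)_i\|_{\bbH_i}^2$, so
\[
\PP\big(\|L_{\calK,X}f - L_{\calK}f\|_{\bbH} > \delta\big) \le \sum_{i=1}^d\PP\big(\|(L_{\calK,X}f)_i - (L_{\calK}f)_i\|_{\bbH_i} > \delta/\sqrt d\big),
\]
and it suffices to bound each summand. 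Fix $i$ and set $V_{i,t} \doteq f_i(Y_{t-1})K_i(Y_{t-1},\cdot) \in \bbH_i$, so $(L_{\calK,X}f)_i = \frac1T\sum_{t=p+1}^T V_{i,t}$. By the reproducing property and Assumption \ref{ass:kernelbound}, $\|V_{i,t}\|_{\bbH_i} = |f_i(Y_{t-1})|\,K_i(Y_{t-1},Y_{t-1})^{1/2} \le \kappa_i\|f\|_\infty$. Since the process is causal, hence stationary, $Y_{t-1}\sim\pi$, so $\EE V_{i,t} = \int f_i(y)K_i(y,\cdot)\,\pi(dy) = (L_{\calK}f)_i =: \mu_i$, whence $\EE(L_{\calK,X}f)_i = \frac{T-p}{T}\mu_i$ and
\[
\|(L_{\calK,X}f)_i - (L_{\calK}f)_i\|_{\bbH_i} \le \Psi_i + \tfrac pT\kappa_i\|f\|_\infty, \qquad \Psi_i \doteq \big\|\tfrac1T\textstyle\sum\nolimits_{t=p+1}^T(V_{i,t}-\EE V_{i,t})\big\|_{\bbH_i}.
\]

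I would then bound $\EE\Psi_i$ via a second-moment computation: $\EE\Psi_i^2 = \frac1{T^2}\sum_{s,t}\EE\langle V_{i,s}-\EE V_{i,s},\, V_{i,t}-\EE V_{i,t}\rangle_{\bbH_i}$, with diagonal terms $\le 4\kappa_i^2\|f\|_\infty^2$. For $s<t$, conditioning on $\sigma(Y_p,\dots,Y_{s-1})$ and using the Markov property, the cross term equals $\EE\langle V_{i,s}-\EE V_{i,s},\,\EE[V_{i,t}\mid Y_{s-1}]-\mu_i\rangle_{\bbH_i}$, where $\EE[V_{i,t}\mid Y_{s-1}]-\mu_i = \int f_i(y)K_i(y,\cdot)\big(P^{t-s}(Y_{s-1},dy)-\pi(dy)\big)$ has $\bbH_i$-norm at most $2\kappa_i\|f\|_\infty\|P^{t-s}(Y_{s-1},\cdot)-\pi\|_{\TV}$. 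By Lemma \ref{prop:geomergodic} together with the Remark following the definition of geometric ergodicity, $\{Y_t\}$ is uniformly ergodic, so $\sup_y\|P^k(y,\cdot)-\pi\|_{\TV}\le C\rho^k$ for some $C<\infty$, $\rho\in(0,1)$; hence $|\cov_{\bbH_i}(V_{i,s},V_{i,t})| \le 4C\kappa_i^2\|f\|_\infty^2\rho^{t-s}$, and summing the geometric series gives $\EE\Psi_i^2 \le C_1\kappa_i^2\|f\|_\infty^2/T$ with $C_1 \doteq 4 + 8C\rho/(1-\rho)$, so $\EE\Psi_i \le \sqrt{C_1}\,\kappa_i\|f\|_\infty/\sqrt T$. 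As a function of $(Y_p,\dots,Y_{T-1})$, the map $\Psi_i$ is separately bounded with constant $2\kappa_i\|f\|_\infty/T$ (changing one coordinate changes $\frac1T\sum_t V_{i,t}$ by at most $\frac2T\kappa_i\|f\|_\infty$ in $\bbH_i$, then apply the reverse triangle inequality), so Theorem 0.2 of \cite{dedecker2015subGaussian} (used also in the proof of Theorem \ref{th:finiteMercer}) yields $\PP(\Psi_i - \EE\Psi_i > u) \le 2\exp\big(-C_{\operatorname{mc}}Tu^2/(4\kappa_i^2\|f\|_\infty^2)\big)$ for a $C_{\operatorname{mc}}\in(0,\infty)$ depending only on the chain.

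To conclude, I would take $\operatorname{c_0}$ large enough that whenever $\delta \ge \operatorname{c_0}\sqrt d\,\kappa\|f\|_\infty\sqrt{\log(T)/T}$ one has both $\tfrac pT\kappa_i\|f\|_\infty \le \delta/(4\sqrt d)$ and $\EE\Psi_i \le \delta/(4\sqrt d)$ (using $\kappa_i\le\kappa$ and $p/T = o(\sqrt{\log(T)/T})$). Then $\PP\big(\|(L_{\calK,X}f)_i - (L_{\calK}f)_i\|_{\bbH_i} > \delta/\sqrt d\big) \le \PP(\Psi_i - \EE\Psi_i > \delta/(2\sqrt d)) \le 2\exp\big(-C_{\operatorname{mc}}T\delta^2/(16d\kappa_i^2\|f\|_\infty^2)\big) \le 2T^{-C_{\operatorname{mc}}\operatorname{c_0}^2/16}$, and summing over $i$ gives the claim with, say, $c_1 = 2d$ and $c_2 = C_{\operatorname{mc}}\operatorname{c_0}^2/16$ (the finitely many small $T$ for which the auxiliary inequalities fail are covered by enlarging $c_1$ so that $1-c_1T^{-c_2}\le 0$ there). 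The main obstacle is the mean bound $\EE\Psi_i \lesssim \kappa_i\|f\|_\infty/\sqrt T$: it is here that the (uniform) ergodicity of the lagged chain must be exploited to obtain summable covariances for the $\bbH_i$-valued summands, with some care needed for the Bochner conditional expectations and the total-variation estimate for $\int h\,d(P^k(y,\cdot)-\pi)$. The coordinatewise reduction, the separately-bounded property, and the calibration of $\operatorname{c_0}$ are then routine.
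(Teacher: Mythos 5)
Your proposal is correct and follows essentially the same route as the paper: bound the expectation of the $\bbH$-norm deviation via a variance computation whose cross terms are controlled by the geometric ergodicity of $\{Y_t\}$ (summable covariances of the $\bbH$-valued summands $f(Y_{t-1})\calK(Y_{t-1},\cdot)$), verify the separately-bounded property with constant of order $\kappa\|f\|_\infty/T$, and apply Theorem 0.2 of \cite{dedecker2015subGaussian}. The only differences are cosmetic — you work coordinatewise in each $\bbH_i$ and then sum, whereas the paper works directly in the product space $\bbH$, and you track the $O(p/T)$ boundary term explicitly, which the paper absorbs into its constants.
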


\begin{proof} 
Let $F : (\clx^{p})^{T-p} \to \RR$ be given by 
    \begin{equation} \label{eq:fctF22}
       F(y_{p},\dots,y_{T-1}) = \left\| \frac{1}{T} \sum\limits_{t=p+1}^T f(y_{t-1}) \clk(y_{t-1},\cdot) - L_{\clk}f\right\|_{\bbH}, \quad y_p,\dots,y_{T-1} \in \clx^p,
        \end{equation}
    and observe that 
\begin{equation}\label{eq:FypyTLclkfdiff}
    F(Y_p,\dots,Y_{T-1}) = \| L_{\clk,X}(f) - L_{\clk}f\|_{\bbH}.
    \end{equation}
    We show that there is a constant $C_F \in (0,\infty)$ such that, for all $T \in \NN$ and 
      \[
      \widetilde{y} = (y_p,\dots, y_{i-1}, \widetilde{z}, y_{i+1},\dots y_{T-1}), \quad \bar{y} = (y_p, \dots, y_{i-1},\bar{z},y_{i+1},\dots, y_{T-1}) \in (\clx^p)^{T-p},
      \]
      we have 
\begin{equation}\label{eq:dedeckerseparatelyboudned}
 |F(\widetilde{y}) - F(\bar{y})| \le  C_F T^{-1}
    \end{equation}
with $C_F \doteq 4  \sqrt{d} \kappa \| f\|_{\infty}$.
The proof of \eqref{eq:dedeckerseparatelyboudned} can be found below. If \eqref{eq:dedeckerseparatelyboudned} holds, then, with further explanations given below,
\begin{align}
   & \PP( F(Y_p,\dots,Y_{T-1}) > \delta) \\
   &= \PP( F(Y_p,\dots,Y_{T-1})  - \EE(F(Y_p,\dots,Y_{T-1})) > \delta - \EE(F(Y_p,\dots,Y_{T-1}))) \nonumber \\
    &\le  \PP( F(Y_p,\dots,Y_{T-1})  - \EE(F(Y_p,\dots,Y_{T-1})) > \delta - C_f T^{-1/2})
    \label{eq:Fdedeckercon1-1}
    \\
    &\le 2 \exp\left( - \frac{ C_{\operatorname{mc}} }{C^2_F} T (\delta - C_f T^{-1/2})^2 \right)
    \le c_1 T^{-c_2},
    \label{eq:Fdedeckercon1}
\end{align}
where \eqref{eq:Fdedeckercon1-1} follows by Lemma \ref{prop:bound_expected_value}. Due to Lemma \ref{prop:geomergodic}, $\{Y_t\}$ is a geometrically ergodic Markov chain, so we can apply Theorem 0.2 of \cite{dedecker2015subGaussian}. This result, together with \eqref{eq:dedeckerseparatelyboudned}, ensures that there is a $C_{\operatorname{mc}} \in (0,\infty)$ such \eqref{eq:Fdedeckercon1} holds. The final inequality then follows for some constants $c_1,c_2 \in (0,\infty)$, since $\delta \geq ({C_F}/{ \sqrt{C_{\operatorname{mc}} }}) \sqrt{{\log(T)}/{T}}$.

Thus, it suffices to prove \eqref{eq:dedeckerseparatelyboudned}. Observe that 
\begin{equation}\label{eq:fxbarminusfybar}
    \begin{split}
    &|F(\widetilde{y}) - F(\bar{y})|
    \\
    &= \left| \left\| \frac{1}{T} \sum\nolimits_{t=p+1}^T f(\widetilde{y}_{t-1}) \clk(\widetilde{y}_{t-1},\cdot) - L_{\clk}f\right\|_{\bbH} - \left\| \frac{1}{T} \sum\nolimits_{t=p+1}^T f(\bar{y}_{t-1}) \clk(\bar{y}_{t-1},\cdot) - L_{\clk}f\right\|_{\bbH}  \right|\\
    &\le  \left\|  \frac{1}{T} \sum\nolimits_{t=p+1}^T f(\widetilde{y}_{t-1}) \clk(\widetilde{y}_{t-1},\cdot) -  \frac{1}{T} \sum\nolimits_{t=p+1}^T f(\bar{y}_{t-1}) \clk(\bar{y}_{t-1},\cdot)  \right\|_{\bbH} \\
    &=  \frac{1}{T} \left\|  f(\widetilde{z})\clk(\widetilde{z},\cdot) -f(\bar{z})\clk(\bar{z},\cdot) \right\|_{\bbH}\\
    &\le \frac{1}{T} \sup\limits_{w, \widetilde{w} \in \clx^p} \| f(w) \calK( w,\cdot) - f(\widetilde{w}) \calK( \widetilde{w},\cdot)\|_{\bbH}
    \le \frac{1}{T} C_F.
    \end{split}
\end{equation}
The final inequality in \eqref{eq:fxbarminusfybar} is verified in the calculations below.

For $w, \widetilde{w} \in \clx^p$ we have
\begin{equation}\label{eq:fxbarkminusfybark}
    \begin{split}
        &\| f(w) \calK( w,\cdot) - f(\widetilde{w}) \calK( \widetilde{w} ,\cdot)\|_{\bbH} \\
        &\le \| f(w) \calK( w,\cdot) - f(w) \calK( \widetilde{w},\cdot)\|_{\bbH} + \| f(w) \calK( \widetilde{w},\cdot) - f(\widetilde{w}) \calK( \widetilde{w},\cdot)\|_{\bbH}\\
        &= \sum\nolimits_{i=1}^{d} \left( \|f_i(w) (K_i(w,\cdot) - K_i(\widetilde{w},\cdot)) \|_{\bbH_i} + \|f_i(w)K_i(\widetilde{w},\cdot) - f_i(\widetilde{w})K_i(\widetilde{w},\cdot)\|_{\bbH_i} \right)\\
        &\le (2 + \sqrt{2}) \sum\nolimits_{i=1}^{d} \kappa_i
  \|f\|_{\infty}
  \le 4  \sqrt{d} \kappa \| f\|_{\infty},
    \end{split}
\end{equation}
where the second inequality follows on observing that, due to Assumption \ref{ass:kernelbound},
\begin{equation*}
    \begin{split}
        \| f_i(w) (K_i(w,\cdot) - K_i(\widetilde{w}, \cdot))\|_{\bbH_i} &\le \|f\|_{\infty} \left(\| K_i(w,\cdot)\|_{\bbH_i} + \| K_i(\widetilde{w},\cdot)\|_{\bbH_i} \right)
        \le 2\|f\|_{\infty}  \kappa_i
    \end{split}
\end{equation*}
  and
\begin{equation*}
    \begin{split}
\|f_i(w)K_i(\widetilde{w},\cdot) - f_i(\widetilde{w})K_i(\widetilde{w},\cdot)\|_{\bbH_i} &\le 2\|f_i\|_{\infty} \| K_i(\widetilde{w},\cdot)\|_{\bbH_i} \le 2 \| f\|_{\infty} \kappa_i.
    \end{split}
\end{equation*}
Combining  \eqref{eq:fxbarminusfybar} and \eqref{eq:fxbarkminusfybark}, we see that \eqref{eq:dedeckerseparatelyboudned} holds with our choice of $C_F$. The result follows. 
\end{proof}

Lemma \ref{pro:extremevalepsilons} establishes a concentration inequality for the maximum of the noise sequence, which is used in Theorem \ref{th:finiteMercer}.
\begin{Lem}\label{pro:extremevalepsilons}
Recall $\eta$ from \eqref{eq:xdef635} and suppose Assumption \ref{ass:subgaussiannoise}. Then, for any $\gamma > \sqrt{2\sigma^2 \log(dT)}$, 
\[
\PP( \| \eta\|_{\infty} > \gamma ) \le 
\exp\left(- \left( \gamma - \sqrt{2\sigma^2 \log(dT)} \right)^2/(2\sigma^2) \right).
\]
\end{Lem}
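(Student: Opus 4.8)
The plan is to control $\PP(\|\eta\|_{\infty} > \gamma)$ by a union bound over the coordinates of $\eta$, estimate each resulting term using the sub-Gaussian tail bound guaranteed by Assumption \ref{ass:subgaussiannoise}, and then complete the square to recast the estimate in the stated form.

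First, I would observe that, by \eqref{eq:xdef635}, $\eta = \vecop(\mathcal{E})$ has $d(T-p) \le dT$ coordinates, each of which equals one of the noise variables $\varepsilon_{i,t}$ with $1 \le i \le d$ and $p+1 \le t \le T$; hence, by a union bound,
\begin{equation*}
\PP\big( \| \eta \|_{\infty} > \gamma \big) \le \sum_{i=1}^{d} \sum_{t=p+1}^{T} \PP\big( |\varepsilon_{i,t}| > \gamma \big).
\end{equation*}

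Second, under Assumption \ref{ass:subgaussiannoise} each $\varepsilon_{i,t}$ is sub-Gaussian with variance proxy $\sigma_i^2 \le \sigma^2 \doteq \max_{i} \sigma_i^2$, so the standard sub-Gaussian tail estimate (see, e.g., Proposition 2.5.2 in \cite{vershynin2018high}) gives $\PP(|\varepsilon_{i,t}| > \gamma) \le \exp(-\gamma^2/(2\sigma^2))$. Summing over the at most $dT$ coordinates yields
\begin{equation*}
\PP\big( \|\eta\|_{\infty} > \gamma \big) \le dT \exp\!\big( -\gamma^2/(2\sigma^2) \big) = \exp\!\big( \log(dT) - \gamma^2/(2\sigma^2) \big).
\end{equation*}

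Third, writing $a \doteq \sqrt{2\sigma^2 \log(dT)}$, so that $\log(dT) = a^2/(2\sigma^2)$, I would use that for $\gamma > a \ge 0$ one has $\gamma^2 - a^2 = (\gamma - a)(\gamma + a) \ge (\gamma - a)^2$, and therefore
\begin{equation*}
\log(dT) - \frac{\gamma^2}{2\sigma^2} = \frac{a^2 - \gamma^2}{2\sigma^2} \le - \frac{(\gamma - a)^2}{2\sigma^2},
\end{equation*}
which is precisely the claimed bound. This lemma is routine, so I do not anticipate a genuine obstacle; the only points requiring attention are keeping track of the exact numerical constant appearing in the sub-Gaussian tail estimate and the elementary observation $d(T-p) \le dT$.
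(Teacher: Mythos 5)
Your argument is correct in spirit but takes a genuinely different route from the paper. The paper does not use a union bound at all: it writes the event as a deviation of $\max_{t}\|\varepsilon_t\|_\infty$ from its expectation, invokes a concentration inequality for the maximum around its mean (Example 2.29 of \cite{wainwright2019high}) to get $\exp\left(-\left(\gamma-\EE\max_t\|\varepsilon_t\|_\infty\right)^2/(2\sigma^2)\right)$, and then bounds $\EE\max_t\|\varepsilon_t\|_\infty\le\sqrt{2\sigma^2\log(dT)}$ via Exercise 2.5.10 of \cite{vershynin2018high}. Your union-bound-plus-Chernoff-plus-completing-the-square argument is more elementary, needs only marginal tail bounds on each $\varepsilon_{i,t}$ (no concentration of the maximum, no control of its expectation), and your algebraic step $\gamma^2-a^2=(\gamma-a)(\gamma+a)\ge(\gamma-a)^2$ is exactly right. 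What the paper's route buys is that the exponent $-(\gamma-a)^2/(2\sigma^2)$ comes out directly with no leading multiplicative constant to absorb.

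That last point is where your write-up has a small quantitative slip: the two-sided sub-Gaussian tail bound with variance proxy $\sigma^2$ is $\PP(|\varepsilon_{i,t}|>\gamma)\le 2\exp(-\gamma^2/(2\sigma^2))$, not $\exp(-\gamma^2/(2\sigma^2))$ (the paper itself uses the factor $2$ in \eqref{eq:offidagonalPhi}). With the correct factor your union bound gives $2\,\exp\left(-(\gamma-a)^2/(2\sigma^2)\right)\exp\left(-a(\gamma-a)/\sigma^2\right)$, which is at most the claimed bound only once $\gamma\ge a+\sigma^2\log 2/a$; in the thin window $a<\gamma<a+\sigma^2\log 2/a$ your estimate falls short of the stated constant. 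This is immaterial where the lemma is actually applied (there $\gamma\ge\sqrt{4\sigma^2\log(dT)}$, well above the threshold), but as a proof of the lemma exactly as stated you should either track the factor of $2$ and note the resulting restriction on $\gamma$, or restrict to the one-sided Chernoff bound if the noise is symmetric, or simply accept a bound of the form $2\exp(-(\gamma-a)^2/(2\sigma^2))$.
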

\begin{proof}
Note that
\begin{align}
\Prob\left( 
\|\eta\|_{\infty} > \gamma
\right)
&=
\Prob\Bigg( 
\max_{t=1,\dots,T} \| \varepsilon_{t} \|_{\infty} - \EE \left(\max_{t=1,\dots,T} \| \varepsilon_{t} \|_{\infty}\right) > \gamma - \EE \left(\max_{t=1,\dots,T} \| \varepsilon_{t} \|_{\infty}\right)
\Bigg)
\nonumber
\\&\leq
\exp\left(- \left( \gamma - \EE \left( \max\limits_{t=1,\dots,T} \| \varepsilon_{t} \|_{\infty}\right)\right)^2/(2\sigma^2) \right)
\label{eq:epsevent1}
\\&\le
\exp\left(- \left( \gamma - \sqrt{2\sigma^2 \log(dT)} \Big)^2\right)/(2\sigma^2) \right),
\label{eq:epsevent2}
\end{align}
where we applied Example 2.29 of \cite{wainwright2019high} with $\sigma^2 = \max_{i=1,\dots,d} \sigma_i^2$ in \eqref{eq:epsevent1} and assuming that $\gamma > \EE (\max_{t=1,\dots,T} \| \varepsilon_t \|_{\infty})$.
In \eqref{eq:epsevent2}, we use that 
\[
\EE ( \max\nolimits_{t=1,\dots,T} \| \varepsilon_t \|_{\infty} ) \leq \sqrt{2\sigma^2 \log(dT)},
\]
which is due to Exercise 2.5.10 in \cite{vershynin2018high}.
\end{proof}

Lemma \ref{prop:bound_expected_value} is used in the proof of Lemma \ref{lem:newproximalvstrueregfunction}.

\begin{Lem} \label{prop:bound_expected_value}
Let $f = (f_1,\dots,f_d)' : \clx^p \to \RR^d$ be a bounded map. For each $T \in \NN$, let  $F_T : (\clx^p)^{T-p} \to \RR$ be given by 
\begin{equation*} 
        F_T(y_p,\dots,y_{T-1}) \doteq \left\| \frac{1}{T} \sum\nolimits_{t=p+1}^{T} f(y_{t-1}) \mathcal{K}(y_{t-1}, \cdot) - L_{\mathcal{K}}(f) \right\|_{\bbH}.
    \end{equation*}
Then, there is a constant $C_f \in (0,\infty)$ such that for all $T \in \NN$, 
\[
\EE ( F_T(Y_p,\dots,Y_{T-1})) \le T^{-1/2} C_f.
\]
\end{Lem}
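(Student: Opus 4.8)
The plan is to pass to the second moment via Jensen's inequality, $\EE(F_T(Y_p,\dots,Y_{T-1})) \le \bigl(\EE(F_T(Y_p,\dots,Y_{T-1})^2)\bigr)^{1/2}$, and then run a standard variance estimate for the $\bbH$-valued empirical average, using only Assumption \ref{ass:kernelbound} and the geometric ergodicity of $\{Y_t\}$ from Lemma \ref{prop:geomergodic}. For $t = p+1,\dots,T$, set $W_t \doteq f(Y_{t-1})\mathcal{K}(Y_{t-1},\cdot) - L_{\mathcal{K}}(f) \in \bbH$. Working under the stationary law (the non-stationary case introduces only an additive $O(1/T)$ bias, controlled exactly as below), the reproducing property gives $\EE_{\bbH}\bigl(f(Y_{t-1})\mathcal{K}(Y_{t-1},\cdot)\bigr) = L_{\mathcal{K}}(f)$, so each $W_t$ is centered; moreover $\|f(y)\mathcal{K}(y,\cdot)\|_{\bbH}^2 = f(y)'\mathcal{K}(y,y)f(y) \le \kappa^2\|f\|_\infty^2$ by Assumption \ref{ass:kernelbound}, and the same bound holds for $\|L_{\mathcal{K}}(f)\|_{\bbH}$ by Jensen, whence $\|W_t\|_{\bbH} \le 2\kappa\|f\|_\infty$ almost surely. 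Since the summation index in $F_T$ runs over $t = p+1,\dots,T$ while the normalization is $1/T$, I would split $\tfrac1T\sum_{t=p+1}^T f(Y_{t-1})\mathcal{K}(Y_{t-1},\cdot) - L_{\mathcal{K}}(f) = V_T - \tfrac pT L_{\mathcal{K}}(f)$ with $V_T \doteq \tfrac1T\sum_{t=p+1}^T W_t$, so that $F_T(Y_p,\dots,Y_{T-1}) \le \|V_T\|_{\bbH} + \tfrac pT\kappa\|f\|_\infty$, reducing matters to showing $\EE\|V_T\|_{\bbH}^2 \le C T^{-1}$ for a constant $C$ proportional to $\kappa^2\|f\|_\infty^2$.

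For this I would expand $\EE\|V_T\|_{\bbH}^2 = T^{-2}\sum_{s,t=p+1}^T \EE\langle W_s, W_t\rangle_{\bbH}$. The $T-p$ diagonal terms are each bounded by $4\kappa^2\|f\|_\infty^2$. For an off-diagonal term with $s < t$, note that $W_s$ is a fixed function of $Y_{s-1}$, so by the tower property and the Markov property of $\{Y_t\}$, $\EE\langle W_s, W_t\rangle_{\bbH} = \EE\langle W_s, \Delta_{s,t}\rangle_{\bbH}$, where $\Delta_{s,t} \doteq \int_{\clx^p} f(y)\mathcal{K}(y,\cdot)\,\bigl(P^{t-s}(Y_{s-1},dy) - \pi(dy)\bigr)$ is a Bochner integral of an $\bbH$-valued map of norm at most $\kappa\|f\|_\infty$ against the signed measure $P^{t-s}(Y_{s-1},\cdot) - \pi$; hence $\|\Delta_{s,t}\|_{\bbH} \le 2\kappa\|f\|_\infty\,\|P^{t-s}(Y_{s-1},\cdot) - \pi\|_{\TV} \le 2\kappa\|f\|_\infty\,\rho^{t-s} J(Y_{s-1})$ by Lemma \ref{prop:geomergodic}. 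Cauchy--Schwarz in $\bbH$ together with $\|W_s\|_{\bbH}\le 2\kappa\|f\|_\infty$ then yields $|\EE\langle W_s, W_t\rangle_{\bbH}| \le 4\kappa^2\|f\|_\infty^2\,\rho^{t-s}\,\EE(J(Y_{s-1}))$, and $\EE(J(Y_{s-1})) = \int J\,d\pi < \infty$ by the integrability of $J$ noted after the definition of geometric ergodicity; indeed, under Assumptions \ref{ass:gbounded} and \ref{ass:subgaussiannoise} the chain is uniformly ergodic, so $J$ may be taken to be a finite constant and this bound holds regardless of the initial law.

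Summing the geometric series, $\sum_{p+1 \le s < t \le T}\rho^{t-s} \le (T-p)\tfrac{\rho}{1-\rho}$, so $\EE\|V_T\|_{\bbH}^2 \le T^{-2}\bigl((T-p)\cdot 4\kappa^2\|f\|_\infty^2 + 2(T-p)\tfrac{\rho}{1-\rho}\cdot 4\kappa^2\|f\|_\infty^2\int J\,d\pi\bigr) \le C_f^2\,T^{-1}$ for a constant $C_f$ of the form $(\text{const})\cdot\kappa\|f\|_\infty$ depending only on $p$, $\rho$, and $\int J\,d\pi$. Combining with the deterministic $O(1/T)$ bias term and Jensen's inequality gives $\EE(F_T(Y_p,\dots,Y_{T-1})) \le \EE\|V_T\|_{\bbH} + \tfrac pT\kappa\|f\|_\infty \le (\EE\|V_T\|_{\bbH}^2)^{1/2} + \tfrac pT\kappa\|f\|_\infty \le C_f T^{-1/2}$ after enlarging $C_f$. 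The step requiring the most care is the Bochner-integral manipulation in the off-diagonal estimate: one must justify that $\EE_{\bbH}\bigl[f(Y_{t-1})\mathcal{K}(Y_{t-1},\cdot) \mid Y_{s-1}\bigr]$ equals the integral of the $\bbH$-valued map $y \mapsto f(y)\mathcal{K}(y,\cdot)$ against the $(t-s)$-step transition kernel, and that its deviation from $L_{\mathcal{K}}(f)$ in $\bbH$-norm is dominated by the total-variation distance between $P^{t-s}(Y_{s-1},\cdot)$ and $\pi$ — both of which rest on the uniform bound $\|f(y)\mathcal{K}(y,\cdot)\|_{\bbH} \le \kappa\|f\|_\infty$ from Assumption \ref{ass:kernelbound} and the boundedness of $f$, which also ensure Bochner integrability.
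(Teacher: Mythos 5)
Your proof is correct and follows essentially the same strategy as the paper: pass to the second moment via Jensen, expand the $\bbH$-valued empirical average into variance and covariance terms, and control the covariances by the geometric decay in total variation coming from Lemma \ref{prop:geomergodic}. There are two places where your execution differs from the paper's. First, the off-diagonal estimate: you work directly with the centered $\bbH$-valued map $W_t$, condition on $Y_{s-1}$ to form the Bochner integral $\Delta_{s,t}$ against the signed measure $P^{t-s}(Y_{s-1},\cdot)-\pi$, and bound $\|\Delta_{s,t}\|_{\bbH}$ by TV and then apply Cauchy--Schwarz in $\bbH$; the paper instead introduces the scalar kernel $H(x,y)=\langle h(x),h(y)\rangle_{\bbH}$, bounds $\sup|H|$, and controls $|\EE_\pi H(Y_{t-1},Y_{s-1}) - \|\pi(h)\|_{\bbH}^2|$ by integrating the scalar function $H$ against $P^{|s-t|}(x,dy)-\pi(dy)$. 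Both hinge on the same uniform bound $\|f(y)\mathcal{K}(y,\cdot)\|_{\bbH}\le\kappa\|f\|_\infty$ from Assumption \ref{ass:kernelbound} and give a $\rho^{|s-t|}\int J\,d\pi$ factor; your vector-valued route is slightly more transparent about why the Markov property is being used, while the paper's scalar route sidesteps any discussion of Bochner integrability. Second, you explicitly split off the bias term $-\tfrac pT\,L_{\mathcal{K}}(f)$ arising from the mismatch between the $T-p$ summands and the $1/T$ normalization, bounding it by $\tfrac pT\kappa\|f\|_\infty$. The paper's display \eqref{al:eq-bound-expectation-A2} silently equates $\tfrac1T\sum_{t=p+1}^T h(Y_{t-1}) - \pi(h)$ with $\tfrac1T\sum_{t=p+1}^T (h(Y_{t-1})-\pi(h))$, which drops exactly this $O(1/T)$ term; your treatment is the more careful one (the discrepancy is harmless since it is dominated by $T^{-1/2}$, but it is a genuine gap in the paper's write-up that you have filled). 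Your remark that uniform ergodicity lets $J$ be taken constant, so the bound holds for any initial law, is also a useful clarification that the paper leaves implicit.
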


\begin{proof}

Let $h : \clx^p \to \mathbb{H}$ be given by 
\begin{equation}\label{eq:dfehfclkvecx}
h( x ) =f(x) \mathcal{K}(x, \cdot), \quad  x \in \clx^p,
\end{equation}
and consider the map $H : \clx^p \times \clx^p \to \RR$ given by  
\[
H(x , y) = \langle h(x), h(y) \rangle_{\bmH}, \quad x, y \in \clx^p.
\]
Then, by the Cauchy-Schwarz inequality, and with further explanations given below,
\begin{equation}\label{eq:caphfunboundedinhnormuniformly}
\sup\nolimits_{x,y \in \clx^p} | H(x, y) | 
\le
\sup\nolimits_{x,y \in \clx^p} \| h(x )\|_{\bmH} \| h(y) \|_{\bmH}
\le \widehat{c}_1 \doteq \|f\|_{\infty}^2 \kappa \sqrt{d}.
\end{equation}
The last inequality in \eqref{eq:caphfunboundedinhnormuniformly} follows, since, by Lemma \ref{prop:clkkernelbasicprops},
\begin{equation} \label{eq:hfunboundedinhnormuniformly}
\begin{split} 
    \| h(x)\|_{\bmH}^2 
    &= 
    \langle f(x) \clk(x, \cdot),  f(x) \clk(x, \cdot)  \rangle_{\bmH}
    = 
    \sum\nolimits_{i=1}^{d} \langle f_i(x) K_i(x, \cdot), f_i(x)K_i(x, \cdot) \rangle_{\bmH}\\
    &\le 
    \sum\nolimits_{i=1}^{d} \| f_i\|_{\infty}^2 K_i(x, x)
    \le 
    \| f\|_{\infty}^2\sum\nolimits_{i=1}^{d}  K_i(x, x)
    \le \kappa \| f\|_{\infty}^2 \sqrt{d}.
\end{split}
\end{equation}
Additionally,  Lemma \ref{prop:geomergodic} tells us that
there are $\pi \in \mathcal{P}(\clx^p)$, $\rho \in (0,1)$, and a $\pi$-integrable   function $J : \clx^p \to [0,\infty)$ such that, for each $s,t  \in \NN$, 
\begin{equation}\label{eq:geomusecovcalc1}
        \| P^{|s-t|}(x,\cdot) - \pi(\cdot) \|_{\text{TV}} \leq \rho^{|s-t|} J(x), \quad  x \in \clx^p,
    \end{equation}
    where $P$ is the transition kernel of the process $\{Y_t\}$ defined in \eqref{eq:ynkernel1}. Then, for $s, t \in \NN$,  \eqref{eq:caphfunboundedinhnormuniformly} and  \eqref{eq:geomusecovcalc1} tell us that 
\begin{equation}
\begin{split}
 & \left| \EE_{\pi}( H (Y_{t-1}, Y_{s-1})) 
 -   \int_{\clx^p\times \clx^p}H(x,y) \pi(dy)\pi(dx) \right| \\
 &= \left|\int_{\clx^p \times \clx^p} H(x, y) P^{|s-t|} (x,dy) \pi(dx) -   \int_{\clx^p\times \clx^p}H(x,y) \pi(dy)\pi(dx) \right|\\
 &= \left| \int_{\clx^p \times \clx^p} H(x,y) \left( P^{|s-t|}(x,dy) - \pi(dy)\right)\pi(dx)\right|\\
 &\le \widehat{c}_1 \int_{\clx^p \times \clx^p} \left| P^{|s-t|}(x, dy) - \pi(dy)\right|  \pi(dx)\\
 &\le 2 \widehat{c}_1 \int_{\clx^p} \rho^{|s-t|}J(x) \pi(dx)
 = \widehat{c}_2 \rho^{|s-t|}, \label{eq:rateconvehnorm}
\end{split}
\end{equation}
where $\widehat{c}_2 \in [0, \infty)$ is defined as $\widehat{c}_2 = 2\widehat{c}_1 \int_{\clx^p} J(x) \pi(dx)$.
If  $\Pi_1,\Pi_2 \stackrel{iid}{\sim} \pi$, then, using Lemma \ref{prob:ind_orthogonal}, we see that 
\begin{equation}
\begin{split}
&
\int_{\clx^p \times \clx^p} H(x, y ) \pi(dy) \pi(dx) 
= \EE[ \langle h(\Pi_1), h(\Pi_2) \rangle_{\mathbb{H}}]\\
&=  \langle \EE_{\bbH} (h(\Pi_1) ), \EE_{\bbH}( h(\Pi_2))\rangle_{\bbH}
= \langle \pi(h) , \pi(h) \rangle_{\bbH} 
= \| \pi(h)\|_{\bbH}^2 ,\label{eq:hnormpiypix1395}
\end{split}
\end{equation}
where
\[
\pi(h) (y) = \int_{\clx^p} f(x) \clk(x, y) \pi(dx).
\]
Above we have used the fact that $\pi(h) \in \bbH$; this can be shown using Lemma \ref{prob:ind_orthogonal}  and the fact that $h(y) \in \bbH$ for each $y \in \clx^p$.
Together, \eqref{eq:rateconvehnorm} and \eqref{eq:hnormpiypix1395} imply that 
\begin{align}\label{eq:epiyhminuspuyhnormh1}
   |  \EE_{\pi}( H (Y_{t-1}, Y_{s-1})) - \| \pi(h)\|^2_{\bbH} | \le \widehat{c}_2 \rho^{|s-t|}. 
\end{align}
Thus, using the fact that $\EE_{\pi}(h(Y_{t})) = \pi(h)$ for all $t$, we have  
\begin{align}
    &\Cov_{\mathbb{H}}( h(Y_t) - \pi(h), h(Y_s) - \pi(h) ) 
    = \EE_{\pi}[ \langle h(Y_t) - \pi(h), h(Y_s) - \pi(h) \rangle_{\mathbb{H}} ]\nonumber\\
    &= \EE_{\pi}[ \langle h(Y_t), h(Y_s) \rangle_{\mathbb{H}}] -   \langle \pi(h), \pi(h) \rangle_{\mathbb{H}} -  \langle \pi(h), \pi(h) \rangle_{\mathbb{H}} + \langle \pi(h), \pi(h) \rangle_{\mathbb{H}}\nonumber\\
    &= \EE_{\pi}[ \langle h(Y_t) , h(Y_s) \rangle_{\mathbb{H}}] - \| \pi(h)\|_{\mathbb{H}}^2
    = \EE_{\pi} (H(Y_t,Y_s)) - \|\pi(h)\|_{\bmH}^2,
    \label{eq:epiyhminuspuyhnormh2}
\end{align}
so \eqref{eq:epiyhminuspuyhnormh1} and \eqref{eq:epiyhminuspuyhnormh2} imply that 
\begin{equation}\label{eq:covariancebound1457}
| \Cov_{\mathbb{H}}( h(Y_t), h(Y_s) ) | = | \Cov_{\mathbb{H}}( h(Y_t) - \pi(h), h(Y_s) - \pi(h) ) | \le \widehat{c}_2 \rho^{|s-t|}.
\end{equation}
We also note that, due to \eqref{eq:hfunboundedinhnormuniformly}, 
\begin{equation}\label{eq:varaincebound1457}
\begin{split}
\sup\nolimits_{t\in \NN}\Var_{\bbH}(h(Y_{t-1})) \le \widehat{c}_3 \doteq  \sup\nolimits_{x\in\clx^p}\|h(x)\|_{\bbH}^2 < \infty.
\end{split}
\end{equation}
Define $\mathcal{I}_{p:T} = \{(i,j) : i, j \in \{p+1,\dots,T\} \text{ and } i \ne j\}$,
then,
\begin{align}
    &
    \left( \EE_{\pi} \left(F(Y_p,\dots, Y_{T-1}) \right)\right)^2
\leq
\EE_{\pi} \left(F(Y_p, \dots, Y_{T-1}) \right)^2
\label{al:eq-bound-expectation-A1}
\\&=
    \EE_{\pi} \left[\left\| \frac{1}{T} \sum\nolimits_{t=p+1}^{T} f(Y_{t-1}) \mathcal{K}(Y_{t-1}, \cdot) - L_{\mathcal{K}}(f) \right\|^2_{\bbH} \right]
\\&= \frac{1}{T^2}
    \EE_{\pi} \left[ \left\| \sum\nolimits_{t=p+1}^{T} \left( h(Y_{t-1}) - \pi(h) \right)\right\|^2_{\bbH} \right]
    \label{al:eq-bound-expectation-A2}\\
    &= \frac{1}{T^2} \sum\nolimits_{t=p+1}^{T} \Var_{\mathbb{H}} (h (Y_{t-1}))  + \frac{1}{T^2}\sum\nolimits_{(s,t) \in \mathcal{I}_{p:T}} \Cov_{\mathbb{H}}(h(Y_{t-1}), h(Y_{s-1}))\\
    &\le \frac{\widehat{c}_3}{T} + \frac{\widehat{c}_2}{T^2} \sum\nolimits_{s,t=1}^{T} \rho^{|s-t|}
    \label{al:eq-bound-expectation-A3}\\
    &\le \frac{1}{T} \left(  \widehat{c}_3 +  \widehat{c}_2\left(1 + \frac{2\rho}{1-\rho}\right)\right) \doteq \frac{1}{T} C_f^2,
\end{align}
where \eqref{al:eq-bound-expectation-A1} uses Jensen's inequality, \eqref{al:eq-bound-expectation-A2} uses \eqref{eq:dfehfclkvecx} and the fact that $\pi(h) =  L_{\mathcal{K}}(f)$, and \eqref{al:eq-bound-expectation-A3} uses \eqref{eq:covariancebound1457} and \eqref{eq:varaincebound1457}. 
\end{proof}

The next lemma is also used in the proof of Theorem \ref{th:finiteMercer}.
\begin{Lem}\label{pro:separetelybounded}
Let $F_{i,M} : (\clx^p)^{T-p+1} \to [0,\infty)$ defined by
\begin{equation*}
\begin{split}
   & F_{i,M}(y_p,\dots,y_T)\\
   &= \max_{k=1,\dots,M} \max_{j =1, \dots, N(k)} \frac{1}{T}
    \left| \sum_{t=p+1}^{T}  \left((y_{t,1} - f(y_{t-1}))_i \beta_{i,j,k}^{-1} \lambda^{\frac{1}{2}}_{i,k} \phi_{i,j,k} (y_{t-1})
    \bm{1}_{\{ | (y_{t,1} - f(y_{t-1}))_i | \leq \gamma \}} \right)
    \right|,
\end{split}
\end{equation*}
    where $\beta_{i,j,k}$ is defined as in \eqref{eq:assumptionbetasdef}. Then, $F_{i,M}$ satisfies a separately bounded condition in the sense of Theorem 0.2. of \cite{dedecker2015subGaussian}. Namely, for all $T \in \NN$, all $M \le T$ and 
      \[
      \widetilde{y} = (y_p,\dots, y_{j-1}, \widetilde{z}, y_{j+1},\dots y_{T}), \quad \bar{y} = (y_p, \dots, y_{j-1},\bar{z},y_{j+1},\dots, y_T) \in (\mathcal{X}^p)^{T-p+1},
      \]
      we have 
    \begin{equation}\label{eq:dedeckerseparatelyboudned2}
    |F_{i,M}(\widetilde{y}) - F_{i,M}(\bar{y})| \le  \frac{2 \gamma}{T}.
    \end{equation}
\end{Lem}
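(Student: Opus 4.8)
The plan is to reduce the increment of $F_{i,M}$ under a change of a single coordinate to the increment of one scalar summand, and then to exploit the normalization built into $\beta_{i,j,k}$ in \eqref{eq:assumptionbetasdef} together with the truncation at level $\gamma$. To ease notation I would relabel the changed coordinate in the statement as the $m$-th one, so that $\widetilde y$ and $\bar y$ agree in every block except $y_m\in\clx^p$, where they carry $\widetilde z$ and $\bar z$ respectively, with $p\le m\le T$. For $k\in\{1,\dots,M\}$ and $\ell\in\{1,\dots,N(k)\}$ set
\[
G_{k,\ell}(y_p,\dots,y_T)\doteq\frac{1}{T}\sum_{t=p+1}^{T}(y_{t,1}-f(y_{t-1}))_i\,\beta_{i,\ell,k}^{-1}\lambda_{i,k}^{1/2}\phi_{i,\ell,k}(y_{t-1})\,\mathbf{1}_{\{|(y_{t,1}-f(y_{t-1}))_i|\le\gamma\}},
\]
so that $F_{i,M}=\max_{k,\ell}|G_{k,\ell}|$. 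Because $\bigl|\max_{\alpha}u_{\alpha}-\max_{\alpha}v_{\alpha}\bigr|\le\max_{\alpha}|u_{\alpha}-v_{\alpha}|$ and $\bigl||a|-|b|\bigr|\le|a-b|$, it then suffices to bound $|G_{k,\ell}(\widetilde y)-G_{k,\ell}(\bar y)|$ uniformly in $k,\ell$.

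The key observation is that the $t$-th summand of $G_{k,\ell}$ depends on the coordinates only through $y_{t,1}$, the first block of $y_t$, and through $y_{t-1}$; hence the block $y_m$ enters at most the summands indexed by $t=m$ (via $y_{m,1}$, present only when $m\ge p+1$) and $t=m+1$ (via the lag $y_m$, present only when $m\le T-1$), so $G_{k,\ell}(\widetilde y)-G_{k,\ell}(\bar y)$ is a difference involving at most two summands. Each summand has modulus at most $\gamma/T$: the truncated residual $(y_{t,1}-f(y_{t-1}))_i\,\mathbf{1}_{\{|(y_{t,1}-f(y_{t-1}))_i|\le\gamma\}}$ has modulus at most $\gamma$ by construction, while $\beta_{i,\ell,k}^{-1}\lambda_{i,k}^{1/2}|\phi_{i,\ell,k}(\cdot)|\le\beta_{i,\ell,k}^{-1}\lambda_{i,k}^{1/2}\|\phi_{i,\ell,k}\|_{\infty}=1$ by the definition of $\beta_{i,\ell,k}$ in \eqref{eq:assumptionbetasdef}. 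For a boundary coordinate ($m=p$ or $m=T$) only a single summand is affected, so the increment is at most $2\gamma/T$ directly; for an interior coordinate two summands are affected, and inspecting each — in $t=m$ only the truncated-residual factor changes, since the eigenfunction factor depends on the unchanged $y_{m-1}$, whereas in $t=m+1$ the whole summand changes — shows the increment is again of order $\gamma/T$. Taking the maximum over $k,\ell$ then yields \eqref{eq:dedeckerseparatelyboudned2}.

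The one genuinely nonroutine point is that the truncation map $x\mapsto x\,\mathbf{1}_{\{|x|\le\gamma\}}$ is discontinuous, so no Lipschitz (bounded-differences) estimate in the residual is available; the argument must instead use only the crude bound that the truncated residual has modulus at most $\gamma$, and it is precisely this bound, together with the identity $\lambda_{i,k}^{1/2}\|\phi_{i,\ell,k}\|_{\infty}=\beta_{i,\ell,k}$, that forces the per-coordinate increment to be of order $\gamma/T$. The remaining work — keeping track of which summands a given coordinate touches, and the standing requirement $M\le T$ that makes all the maxima finite — is elementary.
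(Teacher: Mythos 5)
Your proof is correct and follows essentially the same route as the paper's: reduce the increment of the max to the increment of a single $G_{k,\ell}$, observe that changing $y_m$ perturbs only the summands $t=m$ and $t=m+1$, and bound each summand in modulus by $\gamma/T$ via $\beta_{i,\ell,k}^{-1}\lambda_{i,k}^{1/2}\|\phi_{i,\ell,k}\|_\infty\le 1$ and the truncation. One small remark on the constant, which you (sensibly) hedge on: in the interior case the two perturbed summands each contribute a difference of modulus at most $2\gamma/T$, giving $4\gamma/T$ rather than the $2\gamma/T$ stated; the paper's own proof quietly drops this factor of two, but since the bound only feeds into the exponent of a McDiarmid-type inequality where the constant is absorbed into $C_{\operatorname{mc}}$, nothing downstream is affected.
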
 
\begin{proof} 
Then, 
\begin{align}
    &
    |F_{i,M}(\widetilde{y}) - F_{i,M}(\bar{y})| \nonumber
    \\
    &\le \max_{k=1,\dots,M} \max_{j =1, \dots, N(k)} \frac{1}{T}
    \Big| \sum_{t=p+1}^{T} ( (\widetilde{y}_{t,1} - f(\widetilde{y}_{t-1}))_i \beta_{i,j,k}^{-1} \lambda^{\frac{1}{2}}_{i,k} \phi_{i,j,k} (\widetilde{y}_{t-1}) 
    \bm{1}_{\{ |(\widetilde{y}_{t,1} - f(\widetilde{y}_{t-1}))_i |  \leq \gamma \}}
    \nonumber
    \\&\hspace{3cm}-
    (\bar{y}_{t,1} - f(\bar{y}_{t-1}))_i \beta_{i,j,k}^{-1} \lambda^{\frac{1}{2}}_{i,k} \phi_{i,j,k} (\bar{y}_{t-1}) 
    \bm{1}_{\{ | (\bar{y}_{t,1} - f(\bar{y}_{t-1}))_i | \leq \gamma \}}
    \Big| \nonumber\\
    &=   \max_{k=1,\dots,M} \max_{j =1, \dots, N(k)} \frac{1}{T}
    \Big| \sum_{t=j}^{j+1} ( (\widetilde{y}_{t,1} - f(\widetilde{y}_{t-1}))_i \beta_{i,j,k}^{-1} \lambda^{\frac{1}{2}}_{i,k} \phi_{i,j,k} (\widetilde{y}_{t-1}) 
    \bm{1}_{\{ | (\widetilde{y}_{t,1} - f(\widetilde{y}_{t-1}))_i |  \leq \gamma \}}
    \nonumber
    \\&\hspace{3cm}-
    (\bar{y}_{t,1} - f(\bar{y}_{t-1}))_i \beta_{i,j,k}^{-1} \lambda^{\frac{1}{2}}_{i,k} \phi_{i,j,k} (\bar{y}_{t-1}) 
    \bm{1}_{\{ | (\bar{y}_{t,1} - f(\bar{y}_{t-1}))_i | \leq \gamma \}}
    \Big| 
    \le \frac{2\gamma}{T},
    \nonumber
\end{align}
where the final inequality uses  that, by definition of $\beta_{i,j,k}$ in \eqref{eq:assumptionbetasdef}, $\beta_{i,j,k}^{-1} \lambda_{i,k}^{1/2} \| \phi_{i,j,k}\|_{\infty}\le 1$,
and
\[
|(y_{t,1} - f({y}_{t-1}))_i \bm{1}_{\{ | (y_{t,1} - f(y_{t-1}))_i | \le \gamma \}}| \le \gamma, \quad y_t,y_{t-1} \in \clx^p.
\]
\end{proof}

\section{Results for Gaussian kernels and their proofs} \label{se:ProofsGaussianKernel}

Recall the definition of a Gaussian kernel \eqref{eq:gaussiankerneldef1}. In this section, we show that the Gaussian kernel satisfies Assumptions \ref{ass:kernelbound}--\ref{ass:momentboundtail}.

Lemma \ref{le:Gaussian-Mercer-rep} provides an eigenfunction expansion of the Gaussian kernel and states that it satisfies Assumption \ref{ass:kernelbound}.

\begin{Lem} \label{le:Gaussian-Mercer-rep}
Recall the set $\cln(k)$ from \eqref{eq:def-mathcalN(k)}.
Enumerate $\mathcal{N}(k)$ as $
\mathcal{N}(k) = \{ \bm{n}_{k,1},\dots,\bm{n}_{k,N(k)}\}$,
where, for each $j =1,\dots, N(k)$,
\begin{equation}\label{eq:boldnkjvectorized}
\bm{n}_{k,j} = (n_{k,j, 1,1},\dots, n_{k,j,1,p}, \dots, n_{k,j,d,1},\dots, n_{k,j,d,p}).
\end{equation}
For $x \in \clx^p$ and $\bm{n}_{k,j} \in \cln(k)$, let
$x^{\bm{n}_{k,j}} \doteq \prod\nolimits_{i=1}^{d} \prod\nolimits_{r=1}^{p}  x_{i,r}^{ n_{k,j,i,r}}$.
The Gaussian kernel in \eqref{eq:gaussiankerneldef1} satisfies Assumptions \ref{ass:kernelbound} and  \ref{ass:separable_kernel_multi} with $\phi_{i,j,k} : \clx^p \to \RR$, $i = 1,\dots, d$, $k \in \NN$, $j = 1,\dots,N(k)$, given by 
\begin{equation}\label{eq:phigaussiandist}
 \phi_{i,j,k}(x)
    =  \exp\left( -\frac{ \|x\|^2}{\tau^2}\right) \binom{k}{ \bm{n}_{k,j}}^{1/2} x^{\bm{n}_{k,j}} , \quad x \in \clx^p,
\end{equation}
and 
\begin{equation}\label{eq:eigenvaluedef1}
    \lambda_{i,k} = \frac{1}{k!} \left( \frac{2}{\tau^2}\right)^k.
\end{equation}
\end{Lem}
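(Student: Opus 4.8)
The plan is to exploit the multiplicative structure of the Gaussian kernel. Assumption \ref{ass:kernelbound} is immediate: $K_i(x,x) = \exp(-\|x-x\|^2/\tau^2) = 1$ for every $x \in \clx^p$, so the kernel bound holds with $\kappa_i = 1$.

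For Assumption \ref{ass:separable_kernel_multi}, I would expand the square in the exponent to obtain the factorization
\[
K_i(x,y) = \exp\!\left(-\tfrac{\|x\|^2}{\tau^2}\right)\exp\!\left(\tfrac{2\,x'y}{\tau^2}\right)\exp\!\left(-\tfrac{\|y\|^2}{\tau^2}\right),
\]
where $x'y = \sum_{i=1}^{d}\sum_{r=1}^{p} x_{i,r}y_{i,r}$ is the Euclidean inner product of the vectorized arguments. Taylor-expanding the middle factor gives $\exp(2x'y/\tau^2) = \sum_{k=0}^{\infty}\frac{1}{k!}(2/\tau^2)^k (x'y)^k$, and the multinomial theorem applied over the index set $\cln(k)$ of \eqref{eq:def-mathcalN(k)} yields $(x'y)^k = \sum_{j=1}^{N(k)}\binom{k}{\bm{n}_{k,j}} x^{\bm{n}_{k,j}} y^{\bm{n}_{k,j}}$, where $N(k)=\binom{k+dp-1}{dp-1}$ is the number of degree-$k$ monomials in $dp$ variables, consistent with \eqref{eq:def-N(k)}. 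Substituting and absorbing the leading exponential factors into the monomials produces exactly \eqref{eq:mercerexpansion} with $\lambda_{i,k}$ as in \eqref{eq:eigenvaluedef1} and $\phi_{i,j,k}$ as in \eqref{eq:phigaussiandist}. The outer sum here starts at $k=0$, but $\cln(0)$ is a singleton with $\lambda_{i,0}=1$ and $\phi_{i,1,0}(x)=\exp(-\|x\|^2/\tau^2)$, so this is merely a harmless reindexing of \eqref{eq:mercerexpansion}; the same $\lambda_{i,k}$ and $\phi_{i,j,k}$ serve every $i$ because the kernels share the bandwidth $\tau$.

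Two points then remain. For boundedness of the $\phi_{i,j,k}$, I would use $|x^{\bm{n}_{k,j}}| = \prod_{i,r}|x_{i,r}|^{n_{k,j,i,r}} \le \|x\|^{k}$ together with $\sup_{t\ge 0} t^{k}e^{-t^2/\tau^2} = (\tau^2 k/(2e))^{k/2}<\infty$, so that $\|\phi_{i,j,k}\|_\infty \le \binom{k}{\bm{n}_{k,j}}^{1/2}(\tau^2 k/(2e))^{k/2}$ (if $\clx$ is bounded this is trivial). To justify rearranging the double series I would observe that
\[
\sum_{k=0}^{\infty}\lambda_{i,k}\sum_{j=1}^{N(k)}|\phi_{i,j,k}(x)\,\phi_{i,j,k}(y)| = \exp\!\left(-\tfrac{\|x\|^2+\|y\|^2}{\tau^2}\right)\exp\!\left(\tfrac{2}{\tau^2}\sum_{i=1}^{d}\sum_{r=1}^{p}|x_{i,r}||y_{i,r}|\right) < \infty
\]
for all $x,y$, with the bound locally uniform; hence the expansion converges absolutely and the grouping above is legitimate. (Positive definiteness and symmetry of $K_i$, needed for it to be a kernel in the paper's sense, are classical for the Gaussian kernel and are in any case visible from the expansion $\sum_{k,j}\phi_{i,j,k}(\cdot)\phi_{i,j,k}(\cdot)$.)

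Since everything is essentially bookkeeping, the only genuinely delicate points are the combinatorial matching---checking that $|\cln(k)| = N(k) = \binom{k+dp-1}{dp-1}$ and that the weight produced by the multinomial theorem is precisely $\binom{k}{\bm{n}}$---and the verification that the monomial eigenfunctions, once damped by $\exp(-\|x\|^2/\tau^2)$, stay bounded on the possibly unbounded domain $\clx^p$; this last point is where the Gaussian prefactor is doing the real work.
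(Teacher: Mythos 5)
Your proof is correct and follows the same route as the paper, which simply states that the claim ``follows from the Taylor expansion of the map $u \mapsto \exp(u)$.'' You have filled in the details the paper leaves implicit---the factorization $\|x-y\|^2 = \|x\|^2 - 2x'y + \|y\|^2$, the multinomial expansion of $(x'y)^k$ matched to $\cln(k)$, the Gaussian-prefactor argument showing each $\phi_{i,j,k}$ is bounded on the unbounded domain, and the absolute-convergence check justifying the rearrangement---all of which are accurate.
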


\begin{proof}
The Gaussian kernel satisfies Assumption \ref{ass:kernelbound} with each $\kappa_i = 1$. Finally, the fact that the Gaussian kernel satisfies Assumption \ref{ass:separable_kernel_multi} follows from the Taylor expansion of the map $u \mapsto \exp(u)$.
\end{proof}

\begin{Rem}
    For the Gaussian kernel, the convergence in \eqref{eq:mercerexpansion} (with $M = \infty$) holds pointwise but not uniformly over $\clx^p \times \clx^p$. 
\end{Rem}

The following lemma establishes an upper bound for exponentially-weighted second moments of Gaussian random variables, and is used in the proof of Lemma \ref{pro:onedimensionalgaussiankernelmoments} to compute second moments of the eigenfunctions \eqref{eq:phigaussiandist}. 

\begin{Lem}\label{prop:normal_moment_zalpha}
Suppose that $X_{\alpha} \sim \cln( \alpha, \sigma^2)$, for $|\alpha| \le L$. Then, there is a constant $C_L \in (0,\infty)$ such that for each $k \in \NN$,
\[
\sup\limits_{|\alpha| \le L} \EE \left[ X^{2k}_{\alpha} \exp\left( - 2\frac{X^2_{\alpha}}{\tau^2}\right)\right] \le C_L \left(\frac{\sigma^2 \tau^2}{4\sigma^2+\tau^2}\right)^k 2^k (k+1)!.
\]
\end{Lem}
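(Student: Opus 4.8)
The plan is to complete the square in the exponent so that the weighted moment collapses to an ordinary (non-central) Gaussian moment whose variance is \emph{exactly} $s^2 \doteq \sigma^2\tau^2/(4\sigma^2+\tau^2)$ and whose mean is bounded uniformly in $\alpha$, and then to estimate that moment via its binomial expansion together with a Catalan-number bound on the double factorial.

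\textbf{Step 1 (reduction to a Gaussian moment).} Writing out the $\cln(\alpha,\sigma^2)$-density and multiplying by $\exp(-2x^2/\tau^2)$ produces a quadratic in $x$ with negative leading coefficient $a \doteq 2/\tau^2 + 1/(2\sigma^2) = (4\sigma^2+\tau^2)/(2\sigma^2\tau^2)$. Completing the square gives exponent $-a(x-\mu)^2 + D$ with $\mu = \alpha/(2a\sigma^2)$ and $D = -2\alpha^2/(4\sigma^2+\tau^2)\le 0$. Setting $s^2 \doteq 1/(2a)$ (which equals $\sigma^2\tau^2/(4\sigma^2+\tau^2)$, the factor in the statement) and letting $Z\sim\cln(\mu,s^2)$, a renormalisation of the Gaussian constant $1/\sqrt{2\pi\sigma^2} = (s/\sigma)\cdot 1/\sqrt{2\pi s^2}$ yields
\[
\EE\!\left[X_\alpha^{2k}\exp\!\left(-2X_\alpha^2/\tau^2\right)\right] \;=\; \tfrac{s}{\sigma}\,e^{D}\,\EE\!\left[Z^{2k}\right] \;\le\; \tfrac{s}{\sigma}\,\EE\!\left[Z^{2k}\right],
\]
using $D\le 0$, and $|\mu| \le L/(2a\sigma^2) \doteq \mu_0$, a constant depending only on $L,\sigma,\tau$ and not on $k$ or $\alpha$.

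\textbf{Step 2 (expanding and bounding the moment).} Let $W\sim\cln(0,1)$. Since the odd central moments of a Gaussian vanish, the binomial theorem gives $\EE[Z^{2k}] = \sum_{j=0}^{k}\binom{2k}{2j}\mu^{2(k-j)}s^{2j}(2j-1)!!$. Writing $(2j-1)!! = \binom{2j}{j}j!/2^{j} = C_j(j+1)!/2^{j}$ with $C_j\le 4^{j}$ the $j$-th Catalan number gives $(2j-1)!! \le 2^{j}(j+1)! \le 2^{k}(k+1)!$ for $0\le j\le k$; in particular the diagonal term $j=k$ is bounded by $2^{k}(k+1)!\,s^{2k}$. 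For the off-diagonal terms one uses $\mu^{2(k-j)}\le \mu_0^{2(k-j)}$ and $s^{2j} = s^{2k}(s^{-2})^{k-j}$, so that the $j$-th term is at most $\binom{2k}{2j}(\mu_0^2 s^{-2})^{k-j}\,2^{k}(k+1)!\,s^{2k}$; summing over $j$ and exploiting that $(k+1)!$ outgrows the geometric weight $(\mu_0^2 s^{-2})^{k-j}$ bounds the total off-diagonal contribution by a constant depending only on $\mu_0,s$ (hence on $L,\sigma,\tau$) times $2^{k}(k+1)!\,s^{2k}$. Adding the diagonal term and multiplying through by $s/\sigma$ gives the asserted inequality with $C_L$ depending only on $L,\sigma,\tau$.

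The main obstacle is controlling the non-centrality: because $\alpha$ need not be zero, $\mu\neq 0$ and one is forced to handle the cross-terms $\binom{2k}{2j}\mu^{2(k-j)}s^{2j}(2j-1)!!$, checking that their accumulation is still compatible with the clean scaling $2^{k}(k+1)!\,s^{2k}$ uniformly over $|\alpha|\le L$. When $\alpha=0$ the bound is immediate, since then $\EE[Z^{2k}] = s^{2k}(2k-1)!! \le s^{2k}2^k(k+1)!$ already gives the claim with $C_L = s/\sigma$; the work is entirely in verifying that passing to $|\alpha|\le L$ costs only a $k$-independent constant, which is where the super-geometric growth of $(k+1)!$ must be played off against the geometric factors generated by the uniform bound $\mu_0$ on the mean.
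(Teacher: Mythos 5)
Your Step 1 is correct and reproduces the paper's first move exactly: after completing the square you obtain the same non-central Gaussian $Z\sim\cln(\mu,s^2)$ with $s^2 = \sigma^2\tau^2/(4\sigma^2+\tau^2)$, a uniformly bounded mean $|\mu|\le\mu_0$, and the prefactor $\tfrac{s}{\sigma}e^{D}\le\tfrac{s}{\sigma}$.

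Step 2 has a genuine gap. You expand $\EE[Z^{2k}]=\sum_{j=0}^{k}\binom{2k}{2j}\mu^{2(k-j)}s^{2j}(2j-1)!!$, correctly bound $(2j-1)!!\le 2^{j}(j+1)!\le 2^{k}(k+1)!$, and then factor $2^{k}(k+1)!\,s^{2k}$ out of every term, leaving $\sum_{j=0}^{k}\binom{2k}{2j}(\mu_0^2/s^2)^{k-j}$. At this point you claim the remaining sum is $O(1)$ because ``$(k+1)!$ outgrows the geometric weight.'' But the $(k+1)!$ has already been extracted; what is left is not a geometric series --- it still carries the uncontrolled binomial coefficients $\binom{2k}{2j}$. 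Writing $r=\mu_0/s$ and substituting $m=k-j$, the leftover sum equals
\begin{equation}
\sum_{m=0}^{k}\binom{2k}{2m}r^{2m}
=\tfrac12\bigl[(1+r)^{2k}+(1-r)^{2k}\bigr],
\end{equation}
which grows geometrically in $k$ for every $r>0$. So the off-diagonal contribution is not bounded by $C\cdot 2^{k}(k+1)!\,s^{2k}$ with a $k$-independent $C$, and the argument as written does not close. The root cause is that replacing $(2j-1)!!$ by the worst-case $2^{k}(k+1)!$ for every $j$ throws away exactly the decay in $j$ that is needed to offset $\binom{2k}{2j}$. The paper avoids this trap by not expanding termwise at all: it writes $\EE[Z^{2k}]=s^{2k}(2k-1)!!\,{}_1F_1(-k,\tfrac12,-\mu^2/(2s^2))$, applies Kummer's transformation, and then imports a bound on the resulting ${}_1F_1$ from the cited references, so that the entire non-central correction is absorbed into a single polynomial-in-$k$ factor of $1+(1+2k)(\mu/s)^2$ rather than into a sum of $k+1$ unbounded binomial terms. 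If you want to keep the elementary route you must keep the $j$-dependence of $(2j-1)!!$ intact and show that $\binom{2k}{2j}(2j-1)!!\,r^{2(k-j)}/\bigl[2^{k}(k+1)!\bigr]$ sums to something that grows at most polynomially in $k$, rather than first overbounding $(2j-1)!!$.
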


\begin{proof}
Observe that 
    \begin{align}
    &
        \EE \left(X^{2k}_{\alpha} \exp\left(-2\left(\frac{X_{\alpha}}{\tau}\right)^2 \right) \right) \nonumber \\
        &=
        \int_{-\infty}^{\infty} x^{2k} \exp\left(-2\left(\frac{x}{\tau}\right)^2\right) 
        \exp\left( -\frac{(x-\alpha)^2}{2\sigma^2} \right) dx
        \nonumber \\&=
       \exp\left( - \frac{\alpha^2}{2} + \frac{\alpha^2 \tau^4}{4\sigma^2 +\tau^2} \right)  \int_{-\infty}^{\infty} x^{2k}  
        \exp\left( -\frac{ -(4\sigma^2+\tau^2)x^2 + 2\alpha x \tau^2 -\alpha^2 \tau^4 /(4\sigma^2+\tau^2))}{2\sigma^2 \tau^2} \right) dx
\nonumber        \\&=
         \left( \frac{2 \pi \tau^2}{4 \sigma^2 + \tau^2}\right)^{1/2}\exp\left( \alpha^2 \left( \frac{\tau^4}{4 \sigma^2 + \tau^2} - \frac{1}{2\sigma^2} \right) \right)  \EE (Y^{2k}_{\alpha}), \label{eq:x2kalphamomentsexp}
    \end{align}
where $Y_{\alpha} \sim \mathcal{N}(\mu_{\alpha}, \sigma^2_Y)$, $\mu_{\alpha} = \frac{\tau^2 \alpha}{4\sigma^2+\tau^2}$, and $\sigma_Y^2 = \frac{\sigma^2 \tau^2}{4\sigma^2+\tau^2}$. Let ${}_1F_1$ denote the confluent hypergeometric function of the first kind (see, e.g., \cite{LUKE197241}, \cite{JOSHI1996251}). Then, by the moment 
formulas for Gaussian random variables in \cite{winkelbauer2012moments}, we have
\begin{align}
    \EE Y^{2k}_{\alpha}
    &=  \pi^{-1/2} \sigma_Y^{2k} 2^{k} \Gamma\left(\frac{1 + 2k}{2}\right) {}_1F_1\left(-k,\frac{1}{2}, -\frac{1}{2} \left(\frac{\mu_{\alpha}}{\sigma_Y}\right)^2 \right)
    \nonumber
    \\&\le
   \pi^{-1/2} \sigma_Y^{2k} 2^{k} \Gamma\left(\frac{1 + 2k}{2}\right) \exp\left(-\frac{1}{2} \left(\frac{\mu_{\alpha}}{\sigma_Y}\right)^2\right)
    {}_1F_1\left(\frac{1}{2} + k,\frac{1}{2}, \frac{1}{2} \left(\frac{\mu_{\alpha}}{\sigma_Y}\right)^2 \right)
    \label{eq:1F1_al1}
    \\&\leq \pi^{-1/2} 
    \sigma_Y^{2k} 2^{k} \Gamma\left(\frac{1 + 2k}{2}\right) \exp\left(-\frac{1}{2} \left(\frac{\mu_{\alpha}}{\sigma_Y}\right)^2\right)
    \left( 1 +  (1 +2k) \left(\frac{\mu_{\alpha}}{\sigma_Y}\right)^2 \right)
    \label{eq:1F1_al2} \\
    &\le \sigma_Y^{2k} 2^{k} k!
    \left( 1 +  (1 +2k) \left(\frac{\mu_{\alpha}}{\sigma_Y}\right)^2 \right),
    \label{eq:1F1_al2+1}
\end{align}
where \eqref{eq:1F1_al1} follows from (1.18) in \cite{LUKE197241} and \eqref{eq:1F1_al2} is due to (4.3) in \cite{JOSHI1996251}.
Combining \eqref{eq:x2kalphamomentsexp} and \eqref{eq:1F1_al2+1}, we see that the result holds with 
\begin{equation}\label{eq:CLdef}
C_{L} \doteq 2 \left( \frac{2 \pi \tau^2}{4 \sigma^2 + \tau^2}\right)^{1/2} \left(\sup\limits_{|\alpha| \le L}   \left(\exp\left( \alpha^2 \left( \frac{\tau^4}{4 \sigma^2 + \tau^2} - \frac{1}{2\sigma^2} \right) \right)\right)\right)\left( \max\left\{ \sup\limits_{|\alpha|\le L} \left(\frac{\mu_{\alpha}}{\sigma_Y}\right)^2, 1 \right\} \right).
\end{equation}
\end{proof}

\begin{Lem}\label{pro:onedimensionalgaussiankernelmoments}
For $k \in \NN$ and $j = 1, \dots, N(k)$, recall from \eqref{eq:phigaussiandist} the eigenfunctions of the Gaussian kernel.  
Then, there is some $b \in (0,\infty)$ such that for all $k \in \NN$, $j = 1, \dots, N(k)$, $i = 1,\dots,d$, and $t \in \NN$, $t \geq p+1$,
\[
\EE( (\phi_{i,j,k}(X_{(t-p):(t-1)}))^2)  \le b \left( \frac{2\sigma^2 \tau^2}{4 \sigma^2 + \tau^2}\right)^{k} k! (k+1)^{pd}.
\]
\end{Lem}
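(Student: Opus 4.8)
The plan is to write $\phi_{i,j,k}\!\big(X_{(t-p):(t-1)}\big)^2$ as a product over the $dp$ scalar coordinates and then to iterate the conditional moment bound supplied by Lemma~\ref{prop:normal_moment_zalpha}. First, recall from \eqref{eq:phigaussiandist} that $\phi_{i,j,k}(x)^2=\binom{k}{\bm{n}_{k,j}}\,x^{2\bm{n}_{k,j}}\exp(-2\|x\|^2/\tau^2)$, and note that since $\|x\|^2$ is the sum of the squares of the coordinates of $x$, the Gaussian factor factorises across coordinates. Writing $X_{(t-p):(t-1)}=(X_{t-1},\dots,X_{t-p})$ and $\bm{n}_{k,j}=(n_{k,j,\ell,r})_{1\le \ell\le d,\,1\le r\le p}$ as in \eqref{eq:boldnkjvectorized}, so that the $\ell$-th coordinate of the $r$-th block is $X_{t-r,\ell}$, one gets
\[
\phi_{i,j,k}\big(X_{(t-p):(t-1)}\big)^2=\binom{k}{\bm{n}_{k,j}}\prod_{r=1}^{p}\prod_{\ell=1}^{d} X_{t-r,\ell}^{\,2n_{k,j,\ell,r}}\exp\!\Big(-\tfrac{2X_{t-r,\ell}^2}{\tau^2}\Big).
\]

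Next I would peel off the blocks one time index at a time. Put $\calA_{m}\doteq\sigma(X_s:s\le m)$. Fix $r\in\{1,\dots,p\}$ and condition on $\calA_{t-r-1}$. By \eqref{eq:nonlinearVAR_2}, $X_{t-r}=g(X_{t-r-1},\dots,X_{t-r-p})+\varepsilon_{t-r}$, so under Assumption~\ref{ass:gaussiannoise} the conditional law of $X_{t-r}$ given $\calA_{t-r-1}$ is $\cln\big(g(X_{t-r-1},\dots,X_{t-r-p}),\Sigma\big)$ with $\Sigma=\diag(\sigma_1^2,\dots,\sigma_d^2)$; in particular its $d$ coordinates are conditionally independent, the $\ell$-th being $\cln(\alpha_\ell,\sigma_\ell^2)$ with $|\alpha_\ell|\le\|g\|_\infty\le M_g$ by Assumption~\ref{ass:gbounded}. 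Applying Lemma~\ref{prop:normal_moment_zalpha} with $L=M_g$ to each coordinate, using that $s\mapsto s\tau^2/(4s+\tau^2)$ is increasing on $(0,\infty)$ to replace $\sigma_\ell^2$ by $\sigma^2\doteq\max_i\sigma_i^2$, and absorbing the finitely many resulting constants $C_{M_g}$ into a single $b_0\in(0,\infty)$, this yields the \emph{deterministic} bound
\[
\EE\Big[\prod_{\ell=1}^{d} X_{t-r,\ell}^{\,2n_{k,j,\ell,r}}\exp\!\Big(-\tfrac{2X_{t-r,\ell}^2}{\tau^2}\Big)\,\Big|\,\calA_{t-r-1}\Big]\le b_0^{\,d}\Big(\tfrac{2\sigma^2\tau^2}{4\sigma^2+\tau^2}\Big)^{\sum_{\ell}n_{k,j,\ell,r}}\prod_{\ell=1}^{d}(n_{k,j,\ell,r}+1)! .
\]
Since the factors for blocks $r+1,\dots,p$ (that is, $X_{t-r-1},\dots,X_{t-p}$) are $\calA_{t-r-1}$-measurable, this bound can be pulled out of the expectation; iterating over $r=1,\dots,p$ via a tower of conditional expectations gives
\[
\EE\big(\phi_{i,j,k}(X_{(t-p):(t-1)})^2\big)\le \binom{k}{\bm{n}_{k,j}}\,b_0^{\,dp}\Big(\tfrac{2\sigma^2\tau^2}{4\sigma^2+\tau^2}\Big)^{\sum_{\ell,r}n_{k,j,\ell,r}}\prod_{r=1}^{p}\prod_{\ell=1}^{d}(n_{k,j,\ell,r}+1)! .
\]

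Finally, since $\bm{n}_{k,j}\in\cln(k)$ we have $\sum_{\ell,r}n_{k,j,\ell,r}=k$, and using the multinomial identity $\binom{k}{\bm{n}_{k,j}}\prod_{\ell,r}n_{k,j,\ell,r}!=k!$ together with $\prod_{\ell,r}(n_{k,j,\ell,r}+1)\le(k+1)^{dp}$,
\[
\binom{k}{\bm{n}_{k,j}}\prod_{\ell,r}(n_{k,j,\ell,r}+1)!= k!\prod_{\ell,r}(n_{k,j,\ell,r}+1)\le k!\,(k+1)^{dp},
\]
which establishes the claim with $b\doteq b_0^{\,dp}$. The one point that requires care—and which I expect to be the only real obstacle—is the conditioning step: the bound from Lemma~\ref{prop:normal_moment_zalpha} must be \emph{uniform} over the conditioning variables so that it can be pulled out and the $p$-fold iteration carried through, and this uniformity holds precisely because those variables enter only through a conditional mean that is controlled by $M_g$. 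The monotonicity in $\sigma^2$ and the bookkeeping of the indexing $X_{(t-p):(t-1)}\leftrightarrow(X_{t-r,\ell})$ are routine.
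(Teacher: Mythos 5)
Your proposal is correct and follows essentially the same route as the paper's proof: factorize $\phi_{i,j,k}^2$ across the $dp$ scalar coordinates, condition block-by-block on the past so that each $X_{t-r}$ is conditionally Gaussian with mean bounded by $\|g\|_\infty$ and diagonal covariance, apply Lemma \ref{prop:normal_moment_zalpha} uniformly in the conditional mean, and finish with the multinomial identity $\binom{k}{\bm{n}}\prod_{\ell,r} n_{k,j,\ell,r}! = k!$ and the bound $\prod_{\ell,r}(n_{k,j,\ell,r}+1)\le (k+1)^{dp}$. The only cosmetic difference is that you make explicit the monotonicity of $s\mapsto s\tau^2/(4s+\tau^2)$ used to replace $\sigma_\ell^2$ by $\sigma^2=\max_i\sigma_i^2$, which the paper does implicitly.
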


\begin{proof}
Recall the quantity $\bm{n}_{k,j}$ introduced in Lemma \ref{le:Gaussian-Mercer-rep}. Since $k$ and $j$ are fixed, we, with a slide abuse of notation, write $\bm{n}$ in place of $ \bm{n}_{k,j}$, where $\bm{n} \doteq ( n_{1,1},\dots, n_{1,p}, \dots, n_{d,1}, \dots, n_{d,p})$. Let
\begin{equation}
Z_{\alpha,i,r } \sim \cln(\alpha_i, \sigma^2_i),  \quad \alpha = (\alpha_1,\dots,\alpha_d)' \in \RR^{d}, \; i = 1,\dots, d, \; r = 1,\dots, p,
\label{eq:zalphadef1}
\end{equation}
and note that Lemma \ref{prop:normal_moment_zalpha} ensures that there is some $b \in (0,\infty)$ such that  
\begin{equation}\label{eq:supalphaleginfzmoment1}
\sup\limits_{\| \alpha\| \le  \|g\|_{\infty}} \EE \left( \exp\left( - \frac{2 Z^{2}_{\alpha, i,r}}{\tau^2}\right) Z^{2 n_{i,r}}_{\alpha,i,r}\right) \le C \left( \frac{\sigma^2_i \tau^2}{4 \sigma^2_i + \tau^2}\right)^{n_{i,r}} 2^{n_{i,r}} (n_{i,r}+1)!.
\end{equation}
With further explanations given below, the second moments of the eigenfunctions can then be bounded as
    \begin{align}
    \EE( (\phi_{i,j,k}(X_{(t-p):(t-1)}))^2) 
    &= 
  \binom{k}{\bm{n}}  \EE \left[\left( \exp\left( - \frac{ \| X_{(t-p):(t-1)} \|^2_2 }{\tau^2} \right) \prod\limits_{r=1}^{p} \prod_{i=1}^d X_{i,t-r}^{n_{i,r}}\right)^2\right]
    \nonumber
    \\&= 
   \binom{k}{\bm{n}}  \EE \left(  \prod_{r=1}^p \prod\limits_{i=1}^{d}  \exp\left( - \frac{ 2 X_{i,t-r}^2 }{\tau^2} \right)   X_{i,t-r}^{2 n_{i,r}}\right)
    \nonumber
    \\&\leq
   \binom{k}{\bm{n}}    \prod\limits_{r=1}^p \prod_{i=1}^d \sup_{\| \alpha \| \leq \| g \|_{\infty} } \EE \left( \exp\left( - \frac{ 2 Z_{\alpha,i,r}^2 }{\tau^2} \right)  Z_{\alpha,i,r}^{2 n_{i,r}} \right)
    \label{al:boundmomentsphial3}
    \\&\le
b  \binom{k}{\bm{n}}   \prod\limits_{r=1}^{p} \prod_{i=1}^d \left( \frac{\sigma^2_i \tau^2}{4 \sigma^2_i + \tau^2}\right)^{n_{i,r}} 2^{n_{i,r}} (n_{i,r}+1)!
\label{al:boundmomentsphial4}
    \\&\leq b \binom{k}{\bm{n}} 
    \left( \frac{\sigma^2 \tau^2}{4 \sigma^2 + \tau^2}\right)^{k} 2^{k} \prod\limits_{r=1}^{p} \prod_{i=1}^d  (n_{i,r}+1)!,
    \label{al:boundmomentsphial5}
\end{align}
where \eqref{al:boundmomentsphial3} is proved below and \eqref{al:boundmomentsphial4} follows from \eqref{eq:supalphaleginfzmoment1}.
For \eqref{al:boundmomentsphial3}, we recursively apply the following estimate:
\begin{align}
    &
    \EE \left(  \prod_{r=1}^p \prod\limits_{i=1}^{d}  \exp\left( - \frac{ 2 X_{i,t-r}^2 }{\tau^2} \right)   X_{i,t-r}^{2 n_{i,r}}\right)
    \nonumber
    \\&=
    \EE \left( \prod_{r=2}^{p} \prod_{{i}=1}^d \exp\left( - \frac{ 2 X_{i,t-r}^2 }{\tau^2} \right)  X_{i,t-r}^{2 n_{i,r}} 
    \EE \left[ 
    \prod_{{i}=1}^d \exp\left( - \frac{ 2 X_{i,t-1}^2 }{\tau^2} \right)  X_{i,t-1}^{2 n_{i,1}}
    \mid 
    X_{(t-p-1):(t-2)} \right] \right)
    \label{al:boundmomentsphial1}
    \\&=
    \EE \left( \prod_{r=2}^{p} \prod_{{i}=1}^d \exp\left( - \frac{ 2 X_{i,t-r}^2 }{\tau^2} \right)  X_{i,t-r}^{2 n_{i,r}} 
    \prod_{{i}=1}^d 
    \EE \left[ 
    \exp\left( - \frac{ 2 X_{i,t-1}^2 }{\tau^2} \right)  X_{i,t-1}^{2 n_{i,1}}
    \mid 
    X_{(t-p-1):(t-2)} \right] \right)
    \label{al:boundmomentsphial2}
    \\&\leq
    \EE \left( \prod_{r=2}^{p} \prod_{{i}=1}^d \exp\left( - \frac{ 2 X_{i,t-r}^2 }{\tau^2} \right)  X_{i,t-r}^{2 n_{i,r}} \right)
    \prod_{{i}=1}^d
    \sup_{\| \alpha \| \leq \| g \|_{\infty} } \EE \left( \exp\left( - \frac{ 2 Z_{\alpha,i,t-1}^2 }{\tau^2} \right)  Z_{\alpha,i,t-1}^{2 n_{i,1}} \right),
    \label{al:boundmomentsphial2.1}
\end{align}
where the first identity uses the law of total expectation and the second identity  uses that $\Sigma = \diag(\sigma_1^2,\dots,\sigma_d^2)$ is a diagonal matrix. For the inequality in the final line, note that 
$X_{t-1} \mid X_{(t-p-1):(t-2)} \sim \mathcal{N}(g(X_{t-2},\dots,X_{t-2-p}), \Sigma)$. To further bound \eqref{al:boundmomentsphial5}, note that $n_{i,r} \le k$, so  
\begin{equation}\label{eq:prodsofnirplus1}
\begin{split}
  \binom{k}{\bm{n}} \prod\limits_{r=1}^{p} \prod_{i=1}^d  (n_{i,r}+1)! &= \binom{k}{\bm{n}} \prod\limits_{r=1}^{p} \prod_{i=1}^d  (n_{i,r}+1) n_{i,r}!= k! \prod\limits_{r=1}^{p} \prod_{i=1}^d  (n_{i,r}+1)\le k! (k+1)^{pd}
    \end{split}
\end{equation}
Combining \eqref{al:boundmomentsphial5} and \eqref{eq:prodsofnirplus1}, we see that
\[
    \EE( (\phi_{i,j,k}(X_{(t-p):(t-1)}))^2)  \le b
    \left( \frac{2 \sigma^2 \tau^2}{4 \sigma^2 + \tau^2}\right)^{k}  (k+1)^{pd} k!,
\]
as claimed. 
\end{proof}

\begin{Lem}\label{lem:gaussiankernelassumptiontailmoment}
There are $b_3 \in (0,\infty)$ and 
\begin{equation}\label{eq:rho0defgaussian}
\rho_0 \in \left( \frac{4\sigma^2}{4\sigma^2 + \tau^2},  1 \right),\end{equation}
 such that Assumption \ref{ass:momentboundtail} holds for the Gaussian kernel.  
\end{Lem}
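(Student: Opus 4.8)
The plan is to chain the explicit eigenvalue formula of Lemma \ref{le:Gaussian-Mercer-rep} with the second–moment bound of Lemma \ref{pro:onedimensionalgaussiankernelmoments}, and then observe that the factorial growth in the latter is exactly cancelled by the factorial decay of the former, so that what remains is a geometric sequence (times a polynomial), which is trivially summable and tail–estimable.

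First I would fix $i \in \{1,\dots,d\}$, $k \in \NN$, and $t \ge p+1$. Lemma \ref{le:Gaussian-Mercer-rep} gives $\lambda_{i,k} = (k!)^{-1}(2/\tau^2)^k$, and Lemma \ref{pro:onedimensionalgaussiankernelmoments} gives a constant $b \in (0,\infty)$, independent of $i,j,k,t$, with $\EE((\phi_{i,j,k}(Y_{t-1}))^2) = \EE((\phi_{i,j,k}(X_{(t-p):(t-1)}))^2) \le b\,(\tfrac{2\sigma^2\tau^2}{4\sigma^2+\tau^2})^k\, k!\,(k+1)^{pd}$. Since this bound is uniform in $j$, summing over $j = 1,\dots,N(k)$ just multiplies by $N(k) = \binom{k+dp-1}{dp-1}$; after cancelling $k!$ and using $\tfrac{2}{\tau^2}\cdot\tfrac{2\sigma^2\tau^2}{4\sigma^2+\tau^2} = \tfrac{4\sigma^2}{4\sigma^2+\tau^2}$, this yields
\begin{equation*}
\lambda_{i,k}\sum_{j=1}^{N(k)}\EE((\phi_{i,j,k}(Y_{t-1}))^2) \;\le\; b\, N(k)\,(k+1)^{pd}\left(\frac{4\sigma^2}{4\sigma^2+\tau^2}\right)^{k}.
\end{equation*}
Writing $q \doteq \tfrac{4\sigma^2}{4\sigma^2+\tau^2} \in (0,1)$ and fixing any $\rho_0 \in (q,1)$ as in \eqref{eq:rho0defgaussian}, the polynomial $N(k)(k+1)^{pd}$ (degree $2pd-1$ in $k$) satisfies $b\,N(k)(k+1)^{pd}(q/\rho_0)^k \le b_3$ for some $b_3 \in (0,\infty)$, because $q/\rho_0 < 1$; hence $\alpha_k \doteq b_3\rho_0^k$ dominates the left-hand side above, which gives \eqref{eq:momentboundtaileqn1}, and $\sum_k \alpha_k = b_3\rho_0/(1-\rho_0) < \infty$, so $\{\alpha_k\}$ is absolutely summable.

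For the tail estimate \eqref{eq:momentboundtaileqn2}, I would take $M(T) \doteq \lceil \log T\rceil$ (increasing and $\NN$-valued, taken to be $\ge 1$). Then $\sum_{k\ge M(T)}\alpha_k = b_3\rho_0^{M(T)}/(1-\rho_0) \le \tfrac{b_3}{1-\rho_0}\rho_0^{\log T} = \tfrac{b_3}{1-\rho_0}T^{\log\rho_0}$, so \eqref{eq:momentboundtaileqn2} holds with $\beta_1 \doteq b_3/(1-\rho_0)$ and $\beta_2 \doteq \log(1/\rho_0) > 0$ (using $\rho_0 < 1$). This verifies Assumption \ref{ass:momentboundtail} for the Gaussian kernel.

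The argument has no genuinely hard step; it is bookkeeping resting on one structural fact: the $k!$ in the eigenfunction moment bound of Lemma \ref{pro:onedimensionalgaussiankernelmoments} is exactly the reciprocal of the $k!$ in $\lambda_{i,k}$, so the product collapses to a pure geometric term $q^k$ with $q = \tfrac{4\sigma^2}{4\sigma^2+\tau^2} < 1$, and the leftover polynomial factors are absorbed precisely because the hypothesis permits choosing $\rho_0$ strictly above $q$ — which is why \eqref{eq:rho0defgaussian} is stated in that form. The only points requiring care are confirming that the bound in Lemma \ref{pro:onedimensionalgaussiankernelmoments} is uniform in $j$ and valid for every $t \ge p+1$ (so the $j$-sum contributes only $N(k)$), and keeping the dependence of $b_3, \beta_1, \beta_2$ on $d,p,\sigma,\tau$ transparent.
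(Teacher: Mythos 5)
Your proposal is correct and follows essentially the same route as the paper's own proof: both chain the eigenvalue formula $\lambda_{i,k}=(k!)^{-1}(2/\tau^2)^k$ of Lemma~\ref{le:Gaussian-Mercer-rep} with the second-moment bound of Lemma~\ref{pro:onedimensionalgaussiankernelmoments}, use that the $k!$ factors cancel to reveal the geometric ratio $4\sigma^2/(4\sigma^2+\tau^2)<1$, and absorb the remaining polynomial factor $N(k)(k+1)^{pd}$ by choosing $\rho_0$ strictly above that ratio. You additionally spell out the verification of the tail bound \eqref{eq:momentboundtaileqn2} via $M(T)=\lceil\log T\rceil$ and $\beta_2=\log(1/\rho_0)$, which the paper leaves implicit once $\alpha_k\le b_3\rho_0^k$ is established.
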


\begin{proof}
Letting $b$ be as in the statement of Lemma \ref{pro:onedimensionalgaussiankernelmoments} and fixing $\rho_0$ as in \eqref{eq:rho0defgaussian}, we can find $k_0 \in \NN$ such that 
\begin{equation}\label{eq:geometricboundrho0}
 \left( \frac{ 4\sigma^2 \tau^2}{4\sigma^2 + \tau^2}\right)^k \frac{\left((k+1)(k + dp - 1)\right)^{dp}}{(dp-1)!} \le \rho_0^k, \quad k \ge k_0.
\end{equation}
Then, for each $k \ge k_0$, 
\begin{equation*}
    \begin{split}
         \lambda_{i,k} \sum\nolimits_{j=1}^{N(k)}  \EE((\phi_{i,j,k}(X_{(t-p):(t-1)}))^2) &\le b \frac{1}{k!} \left( \frac{2}{\tau^2}\right)^k \sum\nolimits_{j=1}^{N(k)} \left( \frac{ 2\sigma^2 \tau^2}{4\sigma^2 + \tau^2}\right)^k k!(k+1)^{pd}\\
         &=  b \sum\nolimits_{j=1}^{N(k)} \left( \frac{ 4\sigma^2 }{4\sigma^2 + \tau^2}\right)^k (k+1)^{pd}\\
         &\le b \left( \frac{ 4\sigma^2 }{4\sigma^2 + \tau^2}\right)^k \frac{\left((k+1)(k + dp - 1)\right)^{dp}}{(dp-1)!}
         \le b \rho_0^k,
    \end{split}
\end{equation*}
where the first inequality uses \eqref{eq:eigenvaluedef1} and Lemma \ref{pro:onedimensionalgaussiankernelmoments}, the second inequality uses the fact that
\[
N(k) = \binom{k + dp - 1}{dp -1 } \le \frac{(k + dp - 1)^{dp}}{(dp-1)!},
\]
and the third inequality uses \eqref{eq:geometricboundrho0}. It follows that there is some $b_3 \in (0,\infty)$ such that \eqref{eq:momentboundtaileqn1} holds for all $k \in \NN$.
\end{proof}

Lemma \ref{pro:bound_multinomial} establishes an upper bound for a quantity that arises when computing (suitably rescaled) suprema of the Gaussian kernel's eigenfunctions, and is used in the proof of Lemma \ref{pro:sumbetasgaussian}.

\begin{Lem} \label{pro:bound_multinomial}
Recall $N(k)$ from \eqref{eq:def-N(k)}. Then, for each $k \in \NN$, 
    \begin{equation} \label{eq:bound_multinomial}
        \sum\nolimits_{j=1}^{N(k)}    \prod\nolimits_{\{(i ,r) \in \{1,\dots,d\} \times \{1,\dots, p\} \;  \mid  \;   n_{i,r} > 0\} }   n_{k,j,i,r}^{-1/2}
        \leq \pi^{\frac{dp}{2}} k^{\frac{dp}{2} -1}. 
    \end{equation}
\end{Lem}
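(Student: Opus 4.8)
The plan is to realize the left-hand side of \eqref{eq:bound_multinomial} as a sum over the lattice points of a dilated simplex and to dominate it by the corresponding Dirichlet integral. Write $m \doteq dp$ and identify $\mathcal{N}(k)$ from \eqref{eq:def-mathcalN(k)} with $\{\bm n \in \NN_0^m : n_1 + \cdots + n_m = k\}$, so that the quantity to bound is $c_k \doteq \sum_{\bm n} \prod_{i : n_i > 0} n_i^{-1/2}$. Grouping the multi-indices by their support $S = \{i : n_i > 0\}$ and writing $s = |S|$ and $A_{s,k} \doteq \sum_{n_1,\dots,n_s \ge 1,\; n_1 + \cdots + n_s = k} \prod_{i=1}^s n_i^{-1/2}$, one gets $c_k = \sum_{s=1}^{\min(m,k)} \binom{m}{s} A_{s,k}$. (Equivalently $c_k = [x^k]\,\bigl(1 + \sum_{n \ge 1} n^{-1/2} x^n\bigr)^m$; this already explains the shape of the bound, since $\sum_{n\ge1} n^{-1/2}x^n \sim \Gamma(1/2)(1-x)^{-1/2}$ as $x \uparrow 1$.)

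Next I would bound each $A_{s,k}$. Since $u \mapsto u^{-1/2}$ is convex on $(0,\infty)$, Jensen's inequality gives $n^{-1/2} \le \int_{n-1/2}^{n+1/2} u^{-1/2}\,du$ for each integer $n \ge 1$; applying this coordinatewise and noting that the unit cubes $\prod_i [n_i - 1/2, n_i + 1/2]$ attached to distinct $\bm n$ have disjoint interiors and all lie in $R_{s,k} \doteq \{x \in \RR^s_{\ge 0} : |\sum_i x_i - k| \le s/2\}$ yields $A_{s,k} \le \int_{R_{s,k}} \prod_{i=1}^s x_i^{-1/2}\,dx$. Evaluating by the Liouville--Dirichlet formula $\int_{\{x \ge 0,\, \sum x_i = t\}} \prod_i x_i^{-1/2}\,d\sigma = \Gamma(1/2)^s\,\Gamma(s/2)^{-1}\,t^{s/2-1}$ gives a bound of order $\pi^{s/2}\,\Gamma(s/2)^{-1}\,k^{s/2-1}$. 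A cleaner route to essentially the same estimate is induction on $s$ via the convolution identity $A_{s,k} = \sum_{j \ge 1} j^{-1/2}\,A_{s-1,\,k-j}$: comparing the inner sum with $\int_0^k x^{-1/2}(k-x)^{(s-3)/2}\,dx = B\!\bigl(\tfrac12,\tfrac{s-1}{2}\bigr)\,k^{s/2-1} = \tfrac{\Gamma(1/2)\Gamma((s-1)/2)}{\Gamma(s/2)}\,k^{s/2-1}$, starting from $A_{1,k} = k^{-1/2}$, and telescoping the product of Beta factors produces $A_{s,k} \lesssim \tfrac{\pi^{s/2}}{\Gamma(s/2)}\,k^{s/2-1}$.

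Summing over $s$ then gives $c_k \lesssim \sum_{s=1}^m \binom{m}{s}\,\tfrac{\pi^{s/2}}{\Gamma(s/2)}\,k^{s/2-1}$, whose leading term ($s = m$) is $\tfrac{\pi^{m/2}}{\Gamma(m/2)}\,k^{m/2-1}$ and whose remaining terms are $O\!\bigl(k^{(m-1)/2-1}\bigr)$. I expect the main obstacle to be quantitative rather than structural: the target constant $\pi^{dp/2}$ is essentially the sharp leading coefficient, since $c_k$ is asymptotic to $\tfrac{\pi^{dp/2}}{\Gamma(dp/2)}\,k^{dp/2-1}$ by a transfer-theorem analysis of $\bigl(1 + \sum_{n\ge1} n^{-1/2}x^n\bigr)^{dp}$, so collapsing the above sum to the single term $\pi^{dp/2}\,k^{dp/2-1}$ leaves little room for slack. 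The losses in the elementary sum-versus-integral steps must therefore be tightened — e.g.\ by keeping the comparison one-dimensional (eliminate the coordinate $n_{i_0} = k - \sum_{i \ne i_0} n_i$ before inflating lattice points to cubes, and use the log-convexity of $x \mapsto \bigl(\prod_{i \ne i_0} x_i^{-1/2}\bigr)\bigl(k - \sum_{i \ne i_0} x_i\bigr)^{-1/2}$, which avoids the slab-thickness factor), and by absorbing the boundary and low-support contributions into the slack carried by the gamma factor; when $dp$ is small, a direct singularity analysis at $x = 1$ that profits from the sub-leading term $1 + \zeta(\tfrac12) < 0$ being negative is the cleanest way to recover the constant exactly. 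For the one place this lemma is used — establishing Assumption \ref{ass:eigenfunctiongrowth} for the Gaussian kernel via Lemma \ref{pro:sumbetasgaussian} — only the exponent $k^{dp/2-1}$ matters, so the cruder bound $c_k \le C_{d,p}\,k^{dp/2-1}$ would already suffice there.
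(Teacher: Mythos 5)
Your decomposition by support, the Dirichlet-integral bound on $A_{s,k}$, and the generating-function asymptotics are all correct, and they do deliver a bound of the form $C_{d,p}\,k^{dp/2-1}$, which, as you rightly observe, is all that Lemma \ref{pro:sumbetasgaussian} needs. But the proposal does not prove the inequality with the stated constant $\pi^{dp/2}$, and your own asymptotics show that no amount of tightening can: the sharp leading coefficient is $\pi^{dp/2}/\Gamma(dp/2)$, and among integers $dp$ one has $\Gamma(dp/2)<1$ exactly when $dp=3$ (since $\Gamma(3/2)=\sqrt{\pi}/2\approx 0.886$). For $dp=3$ the left-hand side of \eqref{eq:bound_multinomial} is therefore asymptotic to $2\pi\,k^{1/2}$, which strictly exceeds the claimed bound $\pi^{3/2}k^{1/2}\approx 5.57\,k^{1/2}$; a second-order count (using $A_{2,n}=\pi+O(n^{-1/2})$ and $\sum_{j\le k}j^{-1/2}=2\sqrt{k}+O(1)$) shows the claimed inequality is already violated for $k$ in the low hundreds. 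So the step in your plan where you propose to ``collapse the sum to the single term $\pi^{dp/2}k^{dp/2-1}$'' by sharpening the sum-versus-integral comparisons would fail for $dp=3$: the obstruction is the factor $1/\Gamma(dp/2)$ that you folded into the word ``essentially'', not the Riemann-sum losses. (For $dp=4$ the two constants coincide exactly and only the negative correction $1+\zeta(1/2)<0$ can save the inequality, as you note; for $dp\ge 5$ there is genuine slack.) The honest conclusion of your analysis is that the lemma should carry the constant $\pi^{dp/2}/\Gamma(dp/2)$, or simply an unspecified $C_{d,p}$, which is harmless for its only use.

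For comparison, the paper proves the lemma by induction on the number of coordinates using the same convolution identity $A_{s,k}=\sum_{j}j^{-1/2}A_{s-1,k-j}$ and the Beta integral $\int_0^k x^{-1/2}(k-x)^{d/2-1}\,dx=B(1/2,d/2)\,k^{(d+1)/2-1}$, and then asserts $B(1/2,d/2)\le\Gamma(1/2)=\sqrt{\pi}$ on the grounds that $\Gamma$ is monotonically increasing; that step fails at $d=2$, where $B(1/2,1)=2>\sqrt{\pi}$ --- precisely the $1/\Gamma(3/2)$ obstruction your asymptotics expose. The paper's induction also silently restricts to multi-indices with all components positive, whereas your support decomposition correctly accounts for the partial-support terms (lower order, but present on the left-hand side). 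So your route is the more reliable one; it just needs to be carried to its actual conclusion rather than to the stated constant.
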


\begin{proof}
For notational convenience, we assume that $p = 1$, we enumerate the elements of $\cln(k)$ in \eqref{eq:def-mathcalN(k)} by $\{\bm{n}_1,\dots,\bm{n}_{N(k)}  \}$, and we write $\bm{n}_j = (n_{j,1},\dots,n_{j,d}), \; \; j = 1,\dots,N(k)$.     

We employ an induction principle across $d$. As base case, we show \eqref{eq:bound_multinomial} for $d=2$ (it holds trivially when $d = 1$). First, note that the map $u \mapsto (k-u)^{-1/2}u^{-1/2}$ is decreasing on the interval $[0,k/2]$ and is increasing on the interval $[k/2,k]$, so 
    \begin{align*}
    \sum\limits_{j=1}^{N(k)}   \prod\limits_{i \in \{ 1,2 \; | \;  n_{j,i} > 0\} }    n_{j,i}^{-1/2}
    &=
        \sum_{j=1}^{k-1} \frac{1}{\sqrt{(k- j) j}}
        \sum_{j=1}^{\lfloor k/2 \rfloor} \frac{1}{\sqrt{(k- j) j}} + 
        \sum_{j=\lfloor k/2 \rfloor+ 1}^{k-1} \frac{1}{\sqrt{(k- j) j}}
        \\&
        \leq
        \int_{0}^{\lfloor k/2 \rfloor} \frac{1}{\sqrt{(k- x) x}} dx + 
        \int_{\lfloor k/2 \rfloor+ 1}^{k} \frac{1}{\sqrt{(k- x) x}} dx
        \\&
        \leq
        \int_{0}^{k} \frac{1}{\sqrt{(k- x) x}} dx
        = \pi,
    \end{align*}
    which shows that \eqref{eq:bound_multinomial} holds for $d =2$.    Now, suppose that \eqref{eq:bound_multinomial} holds for some $d \ge 2$. Using the inductive hypothesis, we see that 
\begin{equation}\label{eq:inductionsecondpartproduct}
    \begin{split}
        \sum\limits_{n_1 + \cdots + n_{d+1} = k} \prod\limits_{i=1}^{d+1} n_i^{-1/2} &=\sum\limits_{n_{d+1}=1}^{k-d} \sum\limits_{n_1 + \cdots + n_{d} = k-n_{d+1}} n_{d+1}^{-1/2} \prod\limits_{i=1}^{d} n_i^{-1/2}  \\
        &= \sum\limits_{n_{d+1} = 1}^{k-d} n_{d+1}^{-1/2} \left( \sum\limits_{n_1 + \cdots + n_{d} = k-n_{d+1}} \prod\limits_{i=1}^{d} n_i^{-1/2}  \right)\\
        &\le \pi^{d/2} \sum\limits_{n_{d+1} = 1}^{k-d} n_{d+1}^{-1/2}  (k- n_{d+1})^{d/2-1}.\\
    \end{split}
\end{equation}
Using the fact that, since $d \ge 2$, the map $u \mapsto u^{-1/2} (k - u)^{d/2 -1}$ is  decreasing  on $[0,k]$, we have 
\begin{equation}\label{eq:sumndplus1kminusnd}
    \begin{split}
\sum\limits_{n_{d+1} = 1}^{k-d} n_{d+1}^{-1/2}  (k- n_{d+1})^{d/2-1} &\le \int_0^{k} x^{-1/2} (k-x)^{d/2 - 1} dx\\
&= k^{d/2 - 1/2} \int_0^1 u^{-1/2}(1-u)^{d/2-1} du\\
&= k^{\frac{d+1}{2} -1 }\text{B}(1/2, d/2),\\
\end{split}
\end{equation}
where the third line uses an integral representation of the beta function. Noting that $\Gamma$ is monotonically increasing, we obtain
\begin{equation*}
\text{B}(1/2, d/2) = \frac{ \Gamma(1/2) \Gamma(d/2)}{\Gamma(d/2 + 1/2)} \le \Gamma(1/2) =  \sqrt{\pi}.
\end{equation*}
The result follows on combining the estimate in the previous  display with \eqref{eq:inductionsecondpartproduct} and \eqref{eq:sumndplus1kminusnd}.\phantom{aaa}
\end{proof}

Lemma \ref{pro:sumbetasgaussian} provides upper bounds for the suprema of  (suitably normalized) eigenfunctions of the Gaussian kernel.

\begin{Lem}\label{pro:sumbetasgaussian}
The Gaussian kernel satisfies Assumption \ref{ass:eigenfunctiongrowth} with 
\begin{equation}\label{eq:c1defgaussiankernelassumption}
b_1 \doteq \pi^{\frac{dp}{2}}, \quad 
b_2 \doteq \frac{dp}{2} - 1.
\end{equation}
\end{Lem}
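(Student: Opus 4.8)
The strategy is to derive an explicit expression for $\beta_{i,j,k}$ and then read off the bound from the combinatorial estimate of Lemma~\ref{pro:bound_multinomial}. First I would compute the supremum norm of the eigenfunctions in \eqref{eq:phigaussiandist}. Writing the coordinates of $x\in\clx^{p}\subseteq\RR^{dp}$ as $x_{i',r'}$, $i'=1,\dots,d$, $r'=1,\dots,p$, both the Gaussian factor $\exp(-\|x\|^{2}/\tau^{2})=\prod_{i',r'}\exp(-x_{i',r'}^{2}/\tau^{2})$ and the monomial $x^{\bm{n}_{k,j}}=\prod_{i',r'}x_{i',r'}^{\,n_{k,j,i',r'}}$ factor over the $dp$ coordinates, so
\[
\|\phi_{i,j,k}\|_{\infty}\;\le\;\binom{k}{\bm{n}_{k,j}}^{1/2}\prod_{i'=1}^{d}\prod_{r'=1}^{p}\;\sup_{u\in\RR}\bigl|u^{\,n_{k,j,i',r'}}e^{-u^{2}/\tau^{2}}\bigr|.
\]
An elementary one-variable optimization gives $\sup_{u\in\RR}|u^{n}e^{-u^{2}/\tau^{2}}|=(\tau^{2}n/(2e))^{n/2}$ for $n\ge1$ and the value $1$ for $n=0$; since $\sum_{i',r'}n_{k,j,i',r'}=k$ by \eqref{eq:def-mathcalN(k)}, this yields $\|\phi_{i,j,k}\|_{\infty}^{2}\le\binom{k}{\bm{n}_{k,j}}(\tau^{2}/(2e))^{k}\prod_{i',r'}n_{k,j,i',r'}^{\,n_{k,j,i',r'}}$.

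Next I would multiply by $\lambda_{i,k}=\tfrac{1}{k!}(2/\tau^{2})^{k}$ from \eqref{eq:eigenvaluedef1} and use $\binom{k}{\bm{n}_{k,j}}=k!/\bm{n}_{k,j}!$; the factor $k!$ and all powers of $\tau$ and of $2$ cancel, leaving
\[
\beta_{i,j,k}^{2}=\lambda_{i,k}\,\|\phi_{i,j,k}\|_{\infty}^{2}\;\le\;e^{-k}\prod_{i'=1}^{d}\prod_{r'=1}^{p}\frac{n_{k,j,i',r'}^{\,n_{k,j,i',r'}}}{n_{k,j,i',r'}!},
\]
with the convention $0^{0}/0!=1$. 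Applying Stirling's lower bound $m!\ge\sqrt{2\pi m}\,(m/e)^{m}$ to every factor with $n_{k,j,i',r'}\ge1$, the $e^{k}$ so produced (the exponents sum to $k$) cancels the leading $e^{-k}$; discarding the remaining harmless factor $(2\pi)^{-m_{0}/2}\le1$, where $m_{0}$ is the number of nonzero coordinates of $\bm{n}_{k,j}$, gives $\beta_{i,j,k}^{2}\le\prod_{\{(i',r'):\,n_{k,j,i',r'}>0\}}n_{k,j,i',r'}^{-1/2}$.

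Summing over $j=1,\dots,N(k)$ and invoking Lemma~\ref{pro:bound_multinomial}, which bounds precisely this sum, gives $\sum_{j=1}^{N(k)}\beta_{i,j,k}^{2}\le\pi^{dp/2}k^{dp/2-1}$; this is uniform in $i$ and of the form $b_{1}k^{b_{2}}$ with $b_{1}=\pi^{dp/2}$ and $b_{2}=dp/2-1$, so summing in $k$ establishes Assumption~\ref{ass:eigenfunctiongrowth}. I expect the only point requiring genuine care to be the bookkeeping for the coordinates where the monomial exponent vanishes: these must be excluded both from the one-dimensional suprema (where the supremum equals $1$, not $(\tau^{2}n/(2e))^{n/2}$) and from the application of Stirling's bound (valid only for $n\ge1$), which is exactly why Lemma~\ref{pro:bound_multinomial} restricts its product to the strictly positive exponents. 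Everything else reduces to elementary calculus and a single application of Stirling's formula.
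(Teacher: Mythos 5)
Your proposal is correct and takes essentially the same route as the paper's proof: identify the maximizer of $\lambda_{i,k}\phi_{i,j,k}^2$ (you optimize coordinate-by-coordinate, the paper writes down the global maximizer $\widetilde{x}_{i,r}=(\tau^2 n_{i,r}/2)^{1/2}$ directly, which amounts to the same computation), obtain $\beta_{i,j,k}^2\le e^{-k}\prod n_{i,r}^{n_{i,r}}/n_{i,r}!$, cancel the exponentials via Stirling's lower bound restricted to the nonzero exponents, and conclude with Lemma~\ref{pro:bound_multinomial}. No substantive differences.
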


\begin{proof}
As in the proof of Lemma \ref{pro:onedimensionalgaussiankernelmoments}, since $i$ and $k$ are fixed, we write $\bm{n}$ in place of $ \bm{n}_{k,j}$, where $\bm{n} \doteq ( n_{1,1},\dots, n_{1,p}, \dots, n_{d,1}, \dots, n_{d,p})$,
in place of the notation used in \eqref{eq:boldnkjvectorized}.
We first observe that $\phi_{i,j,k}^2$ is maximized at 
\[
\widetilde{x}(j) = (\widetilde{x}_{1,1}(j), \dots, \widetilde{x}_{1,p}(j), \dots, \widetilde{x}_{d,1}(j), \widetilde{x}_{d,p}(j)),
\]
where $
\widetilde{x}_{i,r}(j) = \left( \frac{ \tau^2 n_{i,r}}{2}\right)^{1/2}$. Using Stirling's approximation, we see that 
\begin{align*}
    &\sum\limits_{j=1}^{N(k)} \beta^2_{i,j,k} 
    = 
    \sum\limits_{j = 1}^{N(k)} \sup\limits_{x \in \clx^p } | \lambda_{i,k} \phi_{i,j,k}^2(x) |
    =
     \sum\limits_{j=1}^{N(k)} \frac{1}{k!} \left( \frac{2}{\tau^2}\right)^k\sup\limits_{x\in \clx^p}   \left(  \exp\left( -\frac{2 \|x\|^2}{\tau^2}\right) \binom{k}{\bm{n}}  x^{2\bm{n}}\right)\\
    &= \sum\limits_{j=1}^{N(k)} \frac{1}{k!} \left( \frac{2}{\tau^2}\right)^k  \exp\left( - \frac{2}{\tau^2} \sum\limits_{i=1}^{d}\sum\limits_{r=1}^{p} (\widetilde{x}_{i,r}(j))^2\right) \frac{k!}{\bm{n}!} \prod\limits_{i=1}^{d}\prod\limits_{r=1}^{p} \left( \left( \frac{ \tau^2 n_{i,r}}{2}\right)^{1/2}\right)^{2 n_{i,r}}\\
&= \sum\limits_{j=1}^{N(k)}  \exp\left(- k\right) \prod\limits_{i=1}^{d}\prod\limits_{r=1}^{p} \frac{ n_{i,r}^{n_{i,r}}}{n_{i,r}!}
    =
    \sum\limits_{j=1}^{N(k)}  \exp\left(- k\right) \prod\limits_{\{(i ,r) \in \{1,\dots,d\} \times \{1,\dots, p\} \;  \mid  \;   n_{i,r} > 0\} }  \frac{ n_{i,r}^{n_{i,r}}}{n_{i,r}!}\\
&\le \sum\limits_{j=1}^{N(k)}  \exp\left(- k\right) \prod\limits_{\{(i ,r) \in \{1,\dots,d\} \times \{1,\dots, p\} \;  \mid  \;   n_{i,r} > 0\} }    n_{i,r}^{n_{i,r}}  n_{i,r}^{-1/2}\left( \frac{e}{n_{i,r}}\right)^{n_{i,r}}\\
 &= \sum\limits_{j=1}^{N(k)} \exp(-k)  \prod\limits_{\{(i ,r) \in \{1,\dots,d\} \times \{1,\dots, p\} \;  \mid  \;   n_{i,r} > 0\} }    n_{i,r}^{-1/2} \exp({n_{i,r}})\\ 
& = \sum\limits_{j=1}^{N(k)}   \prod\limits_{\{(i ,r) \in \{1,\dots,d\} \times \{1,\dots, p\} \;  \mid  \;   n_{i,r} > 0\} }    n_{i,r}^{-1/2}. 
\end{align*}
The result follows from the previous display and Lemma \ref{pro:bound_multinomial}.  
\end{proof}

\small
\bibliographystyle{plainnat}
\bibliography{references}

\end{document}